\newtheorem{theorem}{Theorem} [section]
\newtheorem{lemma}[theorem]{Lemma}
\newtheorem{proposition}[theorem]{Proposition}
\newtheorem{remark}[theorem]{Remark} 
\newtheorem{corollary}[theorem]{Corollary}
\DeclareMathOperator*{\supp}{supp}
\newcommand{\noi}{\noindent}
\newcommand{\Z}{\mathbb{Z}}
\newcommand{\R}{\mathbb{R}}
\newcommand{\T}{\mathbb{T}}
\newcommand{\W}{\mathcal{W}}
\newcommand{\Y}{\mathcal{Y}}
\newcommand{\eps}{\varepsilon}
\newcommand{\ft}{\widehat}
\newcommand{\cj}{\overline}
\newcommand{\dx}{\partial_x}
\numberwithin{equation}{section}
\numberwithin{theorem}{section}
\begin{document}

\title
[First and second order approximations]
{\bf First and second order approximations for a
nonlinear wave equation}

\author{Oana Pocovnicu}

\date{\today}

\begin{abstract}
We consider the following nonlinear wave equation:
\begin{equation*}
i\partial_tv-|D|v=|v|^2v,\tag{NLW}
\end{equation*}

\noi
where $D = -i \dx$, both on $\R$ and $\T$.

In the case of $\R$, we prove that if the initial condition
is of order $O(\eps)$ and supported on positive frequencies only,
then the corresponding solution can be approximated by the solution of the 
Szeg\"o equation. The Szeg\"o equation writes $i\partial_tu=\Pi_+(|u|^2u)$,
where $\Pi_+$ is the Szeg\"o projector onto non-negative frequencies,
and is a completely integrable system. The approximation holds for a long time 
$0\leq t\leq \frac{C}{\eps^2}\Big(\log(\frac{1}{\eps^{\delta}})\Big)^{1-2\alpha}$, $0\leq\alpha\leq 1/2$.
The proof is based on the renormalization group method, first introduced
in the context of theoretical physics by Chen, Goldenfeld, and Oono.

As a corollary, we give an example of solution of (NLW) on $\R$ whose
high Sobolev norms inflate over the time,
relatively to the norm of the initial condition.

An analogous result of approximation was proved by 
G\'erard and Grellier \cite{PGSG res}, in the case of $\T$,
using the theory of Birkhoff normal forms.
We improve their result by finding the second order
approximation with the help of an averaging method introduced by
Temam and Wirosoetisno in \cite{Temam Wirosoetisno}. We show that the
effective dynamics will no longer be given by the
Szeg\"o equation.
\end{abstract}

\maketitle

\section{Introduction}
One of the most important properties
in the study of the nonlinear Schr\"odinger equations (NLS) is {\it dispersion}.
It is often exhibited in the form of the Strichartz estimates
of the corresponding linear flow.
In case of the cubic NLS:
\begin{equation}\label{eqn: Schrodinger 4}
i\partial_t u+\Delta u=|u|^2u, \quad (t,x)\in\R\times M,
\end{equation}

\noi
Burq, G\'erard, and Tzvetkov \cite{BGT} observed
that the dispersive properties are strongly influenced
by the geometry of the underlying manifold $M$.
Taking this idea further, G\'erard and Grellier \cite{PGSG} remarked that
dispersion  disappears completely
when $M$ is a sub-Riemannian manifold
or when the Laplacian is replaced by
the Grushin operator. In those cases, by conveniently
decomposing the function $u$,
we obtain that at least in the radial case,
the Schr\"odinger equation is equivalent to
the following system of transport equations:
\begin{align}\label{sys Schrodinger}
i(\partial_t\pm(2m+1)\partial_x)u_m=\Pi_m(|u|^2u),
\end{align}

\noi
where $\Pi_m$ are pseudo-differential orthogonal projectors.
Therefore, studying the Schr\"odinger equation in a non-dispersive situation
comes down to studying a system of the above type.

In this paper we consider the following nonlinear wave equation on $\R$ and $\T$:
\begin{equation}\label{eq:dirac}
\begin{cases}
i\partial_tv-|D|v=|v|^2v,\tag{NLW}\\
v(0)=v_0
\end{cases}
\end{equation}

\noi
where $D = -i \dx$. It is indeed a nonlinear wave equation since
by applying the operator $i\partial_t+|D|$ to both sides of the equation,
we obtain:
\begin{align*}
-\partial_{tt}v+\Delta v=|v|^4v+2|v|^2(|D|v)-v^2(|D|\bar{v})+|D|(|v|^2v).
\end{align*}

\noi
Equation \eqref{eq:dirac} was studied
on $\T$ by G\'erard and Grellier in \cite{PGSG res}.

We consider the Hardy spaces on the unit disc and upper upper-half plane:
\begin{align*}
L^2_+(\T)&=\{f\in L^2(\T); \,\hat{f}(k)=0 \text{ if } k<0\},\\
L^2_+(\R)&=\{f\in L^2(\R); \,\supp\hat{f}\subset [0,\infty)\},
\end{align*}

\noi
and the corresponding Sobolev spaces $H^s_+(\T)=L^2_+(\T)\cap H^s(\T)$
and $H^s_+(\R)=L^2_+(\R)\cap H^s(\R)$, $s\geq 0$.

The Szeg\"o projector onto the Hardy space of the unit disc is $\Pi_+:L^2(\T)\to L^2_+(\T)$, defined by:
\begin{align*}
\Pi_+f(x)=\sum_{k=0}^{\infty}\hat{f}(k)e^{ikx}.
\end{align*}

\noi
In the case of $\R$, the Szeg\"o projector $\Pi_+:L^2(\R)\to L^2_+(\R)$
can be defined similarly by:
\begin{align*}
\ft{\Pi_+f}(\xi)=
\begin{cases}
\hat{f}(\xi), \text{ if } \xi\geq 0,\\
0, \text{ if } \xi< 0
\end{cases}
\end{align*}

\noi
We also define $\Pi_-=I-\Pi_+$, where $I$ is the identity operator.
Applying the projectors $\Pi_+$ and $\Pi_-$
and writing $v=v_++v_-$, where $v_+=\Pi_+(v)$, and $v_-=\Pi_-(v)$,
we obtain that equation \eqref{eq:dirac} is
equivalent to the following system:
\begin{align}\label{system}
\begin{cases}
&i(\partial_t v_++\partial_x v_+)=\Pi_+ (|v|^2v)\\
&i(\partial_t v_--\partial_x v_-)=\Pi_-(|v|^2v).
\end{cases}
\end{align}

\noi
Notice that this is a system of transport equations
 similar to the one obtained from the Schr\"odinger equation \eqref{sys Schrodinger}.
We expect that the study of this system
and therefore the study of the \eqref{eq:dirac} equation help us
understand better NLS in the case of lack of dispersion.

\medskip

The \eqref{eq:dirac} equation
 is a Hamiltonian evolution associated to the Hamiltonian
\begin{equation*}
E(v)=\frac{1}{2}(|D|v,v)+\frac{1}{4}\|v\|_{L^4}^4,
\end{equation*}

\noi
with respect to the symplectic form $\omega(u,v)=\textup{Im}\int u\bar{v}dx$.
From this structure, we obtain the formal conservation law  of energy $E(v(t))=E(v(0))$.
The invariance under translations and under modulations provides two more conservation laws,
$Q(v(t))=Q(v(0))$ and $M(v(t))=M(v(0))$, where
\begin{equation*}
Q(v)=\|v\|_{L^2}^2\quad \text{and} \quad M(v)=(Dv,v).
\end{equation*}

\noi
The conservation of the mass and energy yields a uniform
bound on the $H^{1/2}$-norm of the solution of \eqref{eq:dirac}.
 Therefore it seems natural to
study the well-posedness of \eqref{eq:dirac}
in $H^{1/2}$. The following result from \cite{PGSG res} states that indeed, the \eqref{eq:dirac}
 equation on $\T$ is globally well-posed in
$H^{\frac{1}{2}}(\T)$.
\begin{proposition}[\cite{PGSG res}]
The nonlinear wave equation \eqref{eq:dirac} is globally well-posed in  $H^{\frac{1}{2}}(\T)$.
Moreover, if $v_0\in H^{s}(\T)$ for some $s>\frac{1}{2}$,
then $v\in C(\R;H^{s})$.
\end{proposition}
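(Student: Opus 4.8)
The plan is to prove local well-posedness in $H^{1/2}_+(\T)$ first and then globalize using the conserved quantities. Actually, since the statement concerns $H^{1/2}(\T)$ (not just the Hardy subspace) and uses the transport system structure, I would proceed as follows.

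First I would establish local well-posedness. The key observation is that $\eqref{eq:dirac}$ can be recast via the change of unknown $v(t) = e^{-it|D|} w(t)$, turning it into $i\partial_t w = e^{it|D|}\big(|e^{-it|D|}w|^2 e^{-it|D|}w\big)$, an ODE-type equation in $H^{1/2}$. Since $H^{1/2}(\T)$ is \emph{not} an algebra, I would instead work with the system $\eqref{system}$ and exploit the fact that the transport operators $\partial_t \pm \partial_x$ are skew-adjoint, so they do not affect $H^s$ norms. The core nonlinear estimate needed is that $f \mapsto |f|^2 f$ maps $H^{1/2}(\T)$ to $H^{-1/2}(\T)$ boundedly — more precisely I would prove a multilinear estimate of the form $\|\Pi_+(|v|^2 v)\|_{H^{1/2}} \lesssim \|v\|_{H^{1/2}}^3$ using the duality $H^{1/2}\times H^{1/2}\times H^{1/2}\times H^{1/2}\to \R$ together with the Sobolev embedding $H^{1/2-}(\T)\embeds L^p$ for all $p<\infty$ and a logarithmic/endpoint refinement, or more cleanly by interpolation and the fractional Leibniz rule. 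With such a trilinear estimate in hand, a standard contraction-mapping argument on $C([0,T];H^{1/2})$ for $T = T(\|v_0\|_{H^{1/2}})$ gives local existence, uniqueness, and continuous dependence; persistence of higher regularity (the $H^s$, $s>1/2$, statement) follows by the same fixed-point run in $H^s$ once one has the local $H^{1/2}$ solution, using the algebra property of $H^s(\T)$ for $s>1/2$ to control $|v|^2v$.

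Next I would globalize. From the Hamiltonian structure recalled in the excerpt, $E(v)$, $Q(v)=\|v\|_{L^2}^2$, and $M(v)=(Dv,v)$ are (formally) conserved; I would justify conservation rigorously on the $H^s$, $s>1/2$, solutions by direct differentiation and then pass to $H^{1/2}$ by the continuous-dependence/approximation argument. Since $E = \frac12(|D|v,v) + \frac14\|v\|_{L^4}^4$ with both terms nonnegative, we get $(|D|v(t),v(t)) \le 2E(v_0)$, and combined with $Q$-conservation this yields $\|v(t)\|_{H^{1/2}}^2 = \|v(t)\|_{L^2}^2 + (|D|v(t),v(t)) \le Q(v_0) + 2E(v_0)$, a bound uniform in $t$. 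The a priori bound on the local existence time in terms of $\|v(t)\|_{H^{1/2}}$ then lets us iterate the local theory indefinitely, giving a global solution in $C(\R;H^{1/2}(\T))$; persistence of $H^s$ regularity globally follows since the $H^{1/2}$ bound feeds a linear-in-$\|v\|_{H^s}$ Gronwall estimate for the higher norm.

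The main obstacle is the critical nature of the local well-posedness: $H^{1/2}$ is exactly the Sobolev algebra threshold on $\T$ and the linear operator $|D|$ in $\eqref{eq:dirac}$ (equivalently the transport structure in $\eqref{system}$) provides \emph{no smoothing} and no genuine Strichartz gain in dimension one, so the trilinear estimate must be proved by bare-hands harmonic analysis rather than dispersive inputs. I expect the delicate point to be handling the endpoint: one cannot naively use $H^{1/2}\embeds L^\infty$ (false on $\T$), so the estimate must be set up so that at most one factor is measured in the borderline norm while the others absorb the logarithmic loss, or one works in a slightly stronger Besov-type space adapted to $H^{1/2}$. Since this result is quoted from \cite{PGSG res}, I would cite their argument for this estimate and present only the globalization via conservation laws in detail.
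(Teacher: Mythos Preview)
The paper does not contain a proof of this proposition: it is quoted verbatim from \cite{PGSG res} and no argument is given in the present paper. Hence there is nothing to compare your proposal against here.

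That said, a comment on your sketch. The global step via the conserved quantities $Q$ and $E$ is exactly right, and persistence of $H^s$ regularity for $s>1/2$ via the algebra property and Gronwall is standard. The local step, however, is where your write-up wobbles: you first (correctly) observe that $|v|^2v$ maps $H^{1/2}(\T)$ into $H^{-1/2}(\T)$, but then upgrade this to the endpoint trilinear bound $\|\Pi_+(|v|^2v)\|_{H^{1/2}}\lesssim\|v\|_{H^{1/2}}^3$. These are very different statements, and the second one does \emph{not} follow from Sobolev embedding or fractional Leibniz at the borderline exponent; the transport group $e^{\pm it\partial_x}$ gives no smoothing to compensate. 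The $H^{1/2}$ local theory in \cite{PGSG res} is not a naive contraction in $C_tH^{1/2}$ but relies on more structure (in the spirit of the $H^{1/2}_+$ theory for the Szeg\"o equation), and your proposal would need to invoke that argument rather than the bare trilinear estimate. Since you already plan to cite \cite{PGSG res} for this point, the sketch is serviceable, but you should not present the estimate $\|\,|v|^2v\,\|_{H^{1/2}}\lesssim\|v\|_{H^{1/2}}^3$ as if it were routine.
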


\noi
An analogous result holds for \eqref{eq:dirac} equation
on $\R$.

\medskip

In this paper we prove that the solution of the \eqref{eq:dirac} equation on $\R$
with an initial condition of order $O(\eps)$ and supported only on
positive frequencies, can be approximated by the solution of a simpler equation with the same
initial data. The approximation is of order $O(\eps^2)$
and holds for a long time. The approximate equation
is the Szeg\"o equation, recently introduced by G\'erard and Grellier:
\begin{equation}\label{Szego simple}
i\partial_t u=\Pi_+ (|u|^2u).
\end{equation}

\noi
This equation was studied in details on $\T$ in \cite{PGSG, PGSGIT} and on $\R$ in \cite{pocov1, pocov2}.
It is globally well-posed in $H_+^s(\T)$ and $H_+^s(\R)$ for $s\geq 1/2$.
Its most remarkable property is that it is completely integrable,
in the sense that it admits a Lax pair. In particular, it possesses an infinite sequence of conservation laws,
the strongest one being the $H^{1/2}_+$-norm.

The approximation result for the \eqref{eq:dirac} equation on $\R$
was motivated by a similar one proved by G\'erard, Grellier
\cite{PGSG res} in the case of $\T$. The case of $\R$ brings new difficulties related, as we see below,
to low frequencies. Moreover, the method used in the case of $\T$ is the theory
of Birkhoff normal forms. It seems difficult to use normal forms on $\R$ due to small divisors problems. Our result will
be proved using the renormalization group method of Chen, Goldenfeld and Oono \cite{CGO 1994, CGO 1996}
coming from theoretical physics.

\medskip

The heuristic idea that motivated our result on $\R$ and the previous result on $\T$ in \cite{PGSG res}
is the following.
Consider the \eqref{eq:dirac} equation  with an initial condition
$v_0$ such that $v_0=\eps u_0$, where $u_0\in H_+^{1/2}$.
Since we have conservation of the
momentum and of the energy, it follows that $2E(v(t))-M(v(t))=2E(v_0)-D(v_0)$. This yields:
\begin{align*}
2\big(|D|v_-(t),v_-(t)\big)+\frac{1}{2}\|v(t)\|^4_{L^4}=\frac{1}{2}\|v_{0,+}\|^4_{L^4}=O(\eps^4).
\end{align*}

\noi
Thus, $\|v_-(t)\|_{\dot{H}^{1/2}(\T)}=O(\eps^2)$ for all $t\in\R$.
Moreover, we have
\begin{align*}
\|v_-(t)\|^2_{H^{1/2}(\T)}=\sum_{k\leq -1}(1+|k|^2)^{1/2}|\hat{v}(k)|^2\leq 2\sum_{k\leq -1}|k|
|\hat{v}(k)|^2\leq 2 \|v_-(t)\|^2_{\dot{H}^{1/2}(\R)}=O(\eps^4).
\end{align*}

\noi
Then, $\|v_-(t)\|_{H^{1/2}(\T)}=O(\eps^2)$.
Therefore, $v_-(t)$ is $\eps^2$-small, while the solution $v(t)$ is only $\eps$-small.
It seems thus that the dynamics
of \eqref{eq:dirac} is dominated by $v_+(t)$. We omit then all the terms containing
$v_-$ in the nonlinearity of the first equation in \eqref{system}, since they are supposed to be small.
We obtain that $u(t,x)=v_+(t,x+t)$ almost satisfies the Szeg\"o equation
\begin{equation*}
i\partial_t u=\Pi_+ (|u|^2u).
\end{equation*}

\noi
Hence, it is natural to expect that the Szeg\"o equation
provides us with an approximation of the \eqref{eq:dirac} equation
with a small initial condition supported
on positive frequencies.

In the case of $\R$, the conservation
of energy and momentum still gives $\|v_-(t)\|_{\dot{H}^{1/2}(\R)}=O(\eps^2)$, while we have that $\|v(t)\|_{H^{1/2}(\R)}=O(\eps)$
for all $t\in\R$. However, we have no other information on the $L^2$-norm
of $v_-(t)$. This suggests that the low frequencies cause some new difficulty in proving that $v_-$ is small,
and thus in proving that the flow of \eqref{eq:dirac} can be approximated by that of the Szeg\"o equation.

\medskip

In what follows we state a weaker version of the approximation result for the \eqref{eq:dirac} equation on $\T$ in
\cite{PGSG res}. The original result holds for a slightly longer time 
$0\leq t\leq \frac{1}{\eps^2}\log\big(\frac{1}{\eps^{\delta}}\big)$
and without assuming any bound on the solution of the Szeg\"o equation. 
However, in the proof the authors use the complete integrability of 
the Szeg\"o equation, while in Section 3, we will prove this weaker 
version without using the complete integrability.
\begin{theorem}[G\'erard-Grellier \cite{PGSG res}]\label{Thm T}
Let $0<\eps\ll 1$, $0\leq \alpha\leq 1/2$, and $\delta>0$
sufficiently small. Let $s>\frac{1}{2}$ and $W_0\in H^s_+(\T)$.
Let $v(t)$ be the solution of the \eqref{eq:dirac} equation on $\T$
\begin{equation}
\begin{cases}
i\partial_tv-|D|v=|v|^2v\\
v(0)=\W_0:=\eps W_0.
\end{cases}
\end{equation}

\noi
Denote by $\W \in C(\R, H^{1/2}_+(\T))$
the solution of the Szeg\"o equation on $\T$:
\begin{equation}
\begin{cases}
i\partial_t\mathcal{W}=\Pi_+(|\mathcal{W}|^2\mathcal{W})\\
\mathcal{W}(0)=\W_0
\end{cases}
\end{equation}

\noi
with the same initial data.
Suppose that $\|\W(t)\|_{H^s}\leq C\eps\Big(\log(\frac{1}{\eps^{\delta}})\Big)^{\alpha}$
for all $t\in\R$.

Then, if $0\leq t\leq \frac{1}{\eps^2}\Big(\log(\frac{1}{\eps^{\delta}})\Big)^{1-2\alpha}$, we have
\begin{equation*}
\|v(t)-e^{-i|D|t}\mathcal{W}(t)\|_{H^s}\leq \eps^{3-C_0\delta},
\end{equation*}

\noi
where $C_0>0$ is an absolute constant.
\end{theorem}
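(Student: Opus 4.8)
The plan is to pass to the interaction representation on the positive‑frequency side, where the Szeg\"o equation is exactly the resonant part of the dynamics, and to treat the negative‑frequency part $v_-:=\Pi_-v$ as a small, purely non‑resonant correction. Concretely, set $w_+(t):=e^{i|D|t}v_+(t)$ and $w_-(t):=e^{i|D|t}v_-(t)$, where $v_+:=\Pi_+v$. Using the system \eqref{system} and that $|D|$ acts as $-i\partial_x$ on positive frequencies, a direct computation gives
\begin{equation*}
i\partial_tw_+=\Pi_+\big(|w_+|^2w_+\big)+F(t),\qquad F(t):=e^{i|D|t}\Pi_+\big(|v|^2v-|v_+|^2v_+\big),
\end{equation*}
the cubic oscillatory factors cancelling \emph{identically} on the purely‑$v_+$ term (it is totally resonant), while $F$ gathers the terms carrying at least one factor $v_-$; moreover $w_+(0)=\eps W_0=\W_0=\W(0)$. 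With $r:=w_+-\W$ we get $r(0)=0$ and $i\partial_tr=\Pi_+(|w_+|^2w_+)-\Pi_+(|\W|^2\W)+F$. Since $e^{i|D|t}$ is unitary on $H^s$ and $v-e^{-i|D|t}\W=e^{-i|D|t}r+v_-$, it suffices to prove $\|r(t)\|_{H^s}+\|v_-(t)\|_{H^s}\lesssim\eps^{3-C_0\delta}$ on the stated interval. Throughout I would use the unconditional bounds $\|v(t)\|_{H^{1/2}}=O(\eps)$ and $\|v_-(t)\|_{H^{1/2}}=O(\eps^2)$ (mass, momentum and energy conservation for \eqref{eq:dirac}, exactly as in the heuristic computation of the Introduction) and $\|\W(t)\|_{H^{1/2}}=O(\eps)$ (conservation of the $H^{1/2}_+$‑norm for \eqref{Szego simple}), and run a single continuity/bootstrap argument on the stronger norm under the hypothesis $\|v_+(t)\|_{H^s}\le 2C\eps\big(\log(1/\eps^\delta)\big)^{\alpha}$ (here $s>\tfrac12$, so $H^s(\T)$ is an algebra and $H^{1/2+}\hookrightarrow L^\infty$, used freely).

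The heart of the proof is the long‑time smallness of $v_-$. In Fourier variables, for $n<0$ the equation for $w_-$ reads
\begin{equation*}
i\partial_t\widehat{w_-}(t,n)=e^{-2int}\,\widehat{p}(t,n)+c(t)\,\widehat{w_-}(t,n)+H_n(t),\qquad p:=\Pi_-\big(|w_+|^2w_+\big),
\end{equation*}
where $c(t)=O(\|w_+(t)\|_{L^2}^2)=O(\eps^2)$ is real (the only negative‑side cubic resonance is diagonal, hence merely phase‑modulating) and $H$ collects non‑resonant terms, each with a factor $v_-$. On $\T$ the phase $e^{-2int}$ rotates at speed $|2n|\ge 2$, so there are \emph{no small divisors}: integrating the ODE mode by mode against the integrating factor $\exp(-i\int_0^tc)$ and integrating by parts once in time (using $\widehat{w_-}(0,n)=0$) gains a factor $|n|^{-1}$ and yields, after Minkowski's inequality in $n$,
\begin{equation*}
\|v_-(t)\|_{H^s}\lesssim\|p(t)\|_{H^{s-1}}+\|p(0)\|_{H^{s-1}}+\int_0^t\|\partial_sp(s)\|_{H^{s-1}}\,ds+\int_0^t\|H(s)\|_{H^s}\,ds.
\end{equation*}
Bounding $p$ and $\partial_sp$ via the algebra property and the equation for $w_+$ (replacing $\partial_sw_+$ by $\Pi_+(|w_+|^2w_+)+F$) gives $\|p(t)\|_{H^{s-1}}\lesssim\eps^3$ and $\|\partial_sp(s)\|_{H^{s-1}}\lesssim\eps^5\big(\log(1/\eps^\delta)\big)^{5\alpha}$ under the bootstrap hypothesis, while $\|H(s)\|_{H^s}\lesssim\eps^2\big(\log(1/\eps^\delta)\big)^{2\alpha}\|v_-(s)\|_{H^s}$; closing the resulting Gronwall inequality on $0\le t\le\frac1{\eps^2}\big(\log(1/\eps^\delta)\big)^{1-2\alpha}$, where the secular factor $t\,\eps^5\big(\log(1/\eps^\delta)\big)^{5\alpha}$ is still $\lesssim\eps^{3-C_1\delta}$ and the Gronwall weight is $\lesssim\eps^{-C\delta}$, yields $\|v_-(t)\|_{H^s}\lesssim\eps^{3-C_1\delta}$ there.

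It remains to estimate $r$. The above gives $\|F(t)\|_{H^s}\lesssim\|v_-(t)\|_{H^s}\,\|v(t)\|_{H^s}^2\lesssim\eps^{5-C_1\delta}\big(\log(1/\eps^\delta)\big)^{2\alpha}$. An $H^s$ energy estimate for $r$ — pairing $i\partial_tr$ with $\langle D\rangle^{2s}r$ and distributing derivatives so that in each cubic difference term two of the three factors sit in $H^{1/2}$ (size $O(\eps)$, via interpolation into $L^\infty$) — yields, using $\|\W(t)\|_{H^s}\le C\eps\big(\log(1/\eps^\delta)\big)^{\alpha}$,
\begin{equation*}
\partial_t\|r(t)\|_{H^s}\le C\eps^2\big(\log(1/\eps^\delta)\big)^{2\alpha}\,\|r(t)\|_{H^s}+\|F(t)\|_{H^s}.
\end{equation*}
Integrating from $r(0)=0$ by Gronwall over $[0,t]$ with $t\le\frac1{\eps^2}\big(\log(1/\eps^\delta)\big)^{1-2\alpha}$, the exponential weight is $\exp\!\big(C\eps^2(\log(1/\eps^\delta))^{2\alpha}t\big)\le\exp\!\big(C\log(1/\eps^\delta)\big)=\eps^{-C\delta}$, while $\int_0^t\|F\|_{H^s}\lesssim\eps^{3-C_1\delta}\log(1/\eps^\delta)$, so $\|r(t)\|_{H^s}\lesssim\eps^{3-C_0\delta}$. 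Hence $\|v_+(t)\|_{H^s}\le\|\W(t)\|_{H^s}+\|r(t)\|_{H^s}<2C\eps(\log(1/\eps^\delta))^{\alpha}$, a strict improvement of the bootstrap hypothesis; by global well‑posedness of \eqref{eq:dirac} in $H^s$ the bootstrap set is open and closed, hence equals the whole interval. Adding the two estimates and absorbing the powers of $\log(1/\eps^\delta)$ into a slightly larger $C_0\delta$ (legitimate since $\log(1/\eps)$ grows slower than any negative power of $\eps$) finishes the proof.

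The main obstacle is precisely this long‑time control of $v_-$: the naive Duhamel bound $\|v_-(t)\|_{H^s}\lesssim t\,\eps^3$ degenerates to $O(\eps)$ on the time scale $\sim\eps^{-2}$, the same order as $v_+$ itself, and is useless; the gain comes entirely from the oscillation of $e^{i|D|t}\Pi_-(|v_+|^2v_+)$ in time. On $\T$ the relevant resonance function $|2n|$ stays bounded away from $0$, so a single integration by parts in time — equivalently a Poincar\'e--Dulac change of variables removing the leading non‑resonant term — suffices and no small divisors appear; this is exactly the feature absent on $\R$, where frequencies accumulate at $0$ and force the renormalization‑group machinery used elsewhere in the paper. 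The one delicate point is that the boundary and remainder terms of that integration by parts involve $\partial_tw_+$, which one eliminates via the equation for $w_+$ and controls by the Szeg\"o nonlinearity together with the already‑bounded error $F$, so everything closes inside the single bootstrap above.
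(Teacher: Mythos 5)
Your argument is correct, and it takes a genuinely different route from the one in Section 3. The paper re-proves Theorem \ref{Thm T} by the RG scheme used for $\R$: change variables $u=\tfrac1\eps e^{i|D|t}v$, build the explicit ansatz $u_{\textup{app}}=W+\eps^2F_{\textup{osc}}(W,t)$ out of the Szeg\"o solution $W$, show (Lemma \ref{Est T 1}) that $F_{\textup{osc}}(W,t)$ and $D_WF_{\textup{osc}}\cdot f_{\textup{res}}$ are bounded uniformly in time by powers of $\|W\|_{H^s}$, and run a single Gronwall estimate on $u-u_{\textup{app}}$; the true solution is never split, and $\Pi_-v$ is never estimated separately. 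You instead split $v=v_++v_-$, observe that in the interaction picture the purely positive part satisfies the Szeg\"o equation exactly up to terms carrying a factor $v_-$ (the identity $e^{i|D|t}\Pi_+(|e^{-i|D|t}w_+|^2e^{-i|D|t}w_+)=\Pi_+(|w_+|^2w_+)$, i.e.\ total resonance of the $(+,+,+)\to+$ interaction), and then control $\|v_-\|_{H^s}$ over the long time interval by a mode-by-mode integration by parts in time, using that the nonresonant phase $-2n$, $n\le-1$, is bounded away from zero. This is a Poincar\'e--Dulac normal form performed on the solution rather than on the ansatz; your integration-by-parts boundary and remainder terms (involving $\partial_t w_+$, re-expressed through the equation) play exactly the role of the paper's $\eps^2F_{\textup{osc}}(W,t)$ correction and of the $D_WF_{\textup{osc}}\cdot f_{\textup{res}}$ remainder, and both proofs ultimately rest on the same two structural facts (Lemma \ref{Resonances T} and the absence of small divisors on $\T$). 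What each buys: your route is more self-contained for the $\T$ statement and makes the smallness of $v_-$ explicit (quantifying the heuristic of the introduction, and closer in spirit to the original normal-form proof of \cite{PGSG res}), whereas the paper's ansatz-based scheme transfers verbatim to $\R$ (where the phase degenerates at $\xi=0$ and your single integration by parts would produce $1/\xi$ singularities) and extends systematically to second order in Section 4.

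Three small imprecisions, all fixable: the parenthetical ``the only negative-side cubic resonance is diagonal'' ignores the all-nonpositive resonances (e.g.\ $\Pi_-(|v_-|^2v_-)$ and the $\hat v(0)$ terms visible in \eqref{f_res T}), but these carry at least two factors of $v_-$ and can simply be put into $H$, where your crude bound applies; the bootstrap should formally include a hypothesis on $\|v_-(t)\|_{H^s}$ (say $\le\eps$) so that the terms of $H$ quadratic and cubic in $v_-$ are dominated by $C\eps^2(\log(1/\eps^\delta))^{2\alpha}\|v_-\|_{H^s}$, the conclusion $\|v_-\|_{H^s}\lesssim\eps^{3-C_1\delta}$ then closing it; and in the integration by parts the time derivative also hits the integrating factor $\exp(-i\int_0^s c_n)$, producing an extra term of size $O(\eps^2)\|p\|_{H^{s-1}}$ which is of the same order as $\partial_s p$ and harmless.
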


In the second half of this paper we improve the above result on $\T$.
We find a second order approximate solution,
given by an equation which is more complex
 than the Szeg\"o equation, but which provides a smaller error
 of order
$\eps^5$ instead of $\eps^3$,
in the approximation.
For this purpose, we use the averaging method introduced
by Temam and Wirosoetisno in  \cite{Temam Wirosoetisno}.

In what follows we state and briefly comment the main results of the paper.

\subsection{Main results}

First, in the case of $\R$, we consider an initial condition for \eqref{eq:dirac}
which is supported on positive frequencies only, is of order $O(\eps)$,
and such that the corresponding solution of the Szeg\"o equation is bounded
for all times by $C\eps\Big(\log(\frac{1}{\eps^{\delta}})\Big)^{\alpha}$, $0\leq\alpha\leq 1/2$.
Then the solution of the \eqref{eq:dirac} equation with this initial condition stays $\eps^2$-close to the solution of the Szeg\"o
equation with the same initial condition,
for times $0\leq t\leq \frac{C}{\eps^2}\Big(\log(\frac{1}{\eps^{\delta}})\Big)^{1-2\alpha}$.

\begin{theorem}\label{Main theorem}
Let $0<\eps\ll 1$, $s> \frac{1}{2}$, and $W_0\in H^s_+(\R)$.
Let $v(t)$ be the solution of the \eqref{eq:dirac} on $\R$
\begin{equation}\label{NLW main thm}
\begin{cases}
i\partial_tv-|D|v=|v|^2v\\
v(0)=\W_0=\eps W_0.
\end{cases}
\end{equation}

\noi
Denote by
$\mathcal{W}\in C(\R, H^{s}_+(\R))$ the solution of the Szeg\"o equation on $\R$
\begin{equation}\label{eqn mathcal W}
\begin{cases}
i\partial_t\mathcal{W}=\Pi_+(|\mathcal{W}|^2\mathcal{W})\\
\mathcal{W}(0)=\W_0
\end{cases}
\end{equation}

\noi
with the same initial data.
Assume that there exist $0\leq\alpha\leq \frac{1}{2}$ and
$\delta>0$ small enough such that $\|\W(t)\|_{H^s}\leq C\eps \Big(\log(\frac{1}{\eps^\delta})\Big)^{\alpha}$
for all $t\in\R$.

Then, if $0\leq t\leq \frac{1}{\eps^2}\Big(\log(\frac{1}{\eps^{\delta}})\Big)^{1-2\alpha}$, we have that
\begin{equation*}
\|v(t)-e^{-i|D|t}\mathcal{W}(t)\|_{H^s}\leq C_{\ast}\eps^{2-C_0\delta},
\end{equation*}

\noi
where $C_0>0$ is an absolute constant and $C_{\ast}$ is a constant depending only on
the $H^{1/2}_+(\R)$-norm of $W_0$.
\end{theorem}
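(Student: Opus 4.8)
The plan is to use the renormalization group (RG) method of Chen--Goldenfeld--Oono. First I would pass to the interaction picture by setting $u(t)=e^{i|D|t}v(t)$, so that the \eqref{eq:dirac} equation becomes $i\partial_t u = e^{i|D|t}\bigl(|e^{-i|D|t}u|^2 e^{-i|D|t}u\bigr)$, a nonautonomous equation whose right-hand side is $O(\eps^3)$ when $u=O(\eps)$. Writing $u=u_++u_-$ via $\Pi_\pm$, the heuristic in the introduction shows the $u_-$-component is expected to be $O(\eps^2)$; the goal is to show that the leading-order dynamics of $u_+$ is governed by the Szeg\"o flow. Concretely, I would seek a naive perturbative expansion $u(t)=\W(t)+\eps^3 t\,(\text{secular term})+\cdots$, identify the resonant (secular) part of the first correction, and then renormalize: absorb the secular growth into a slowly-varying ``constant'' $A=A(\tau)$ with $\tau$ a renormalized time, thereby deriving the RG equation. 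The key point is that the resonant part of $e^{i|D|t}(|e^{-i|D|t}u|^2e^{-i|D|t}u)$, after projecting onto positive frequencies, is exactly $\Pi_+(|u_+|^2 u_+)$, because the phases $e^{i(\xi_1-\xi_2+\xi_3-\xi_4)t}$ cancel precisely on the same resonant set that defines the Szeg\"o nonlinearity on $L^2_+$ (here one uses that $|D|=D$ on positive frequencies, so the dispersion relation is linear). Hence the RG equation for $A$ is precisely the Szeg\"o equation, which identifies $\W(t)$ as the RG-improved approximation; all non-resonant contributions and the $u_-$-interactions are genuinely lower order.

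The rigorous estimate would then be run as follows. Set $r(t)=u(t)-\W(t)$ (equivalently $v(t)-e^{-i|D|t}\W(t)$, which is what appears in the statement, since $e^{-i|D|t}$ is a unitary isometry on every $H^s$). Subtracting the two equations, $r$ satisfies $i\partial_t r = \Pi_+(|\W|^2\W)\cdot(\text{cross terms}) + \bigl[e^{i|D|t}(|e^{-i|D|t}u|^2 e^{-i|D|t}u) - \Pi_+(|u|^2u)\bigr]$. I would split the forcing into: (i) the difference of cubic nonlinearities evaluated at $u$ vs. $\W$, which is multilinear and controlled by $\|r\|_{H^s}$ times $O(\eps^2\log^{2\alpha})$ by the algebra property of $H^s(\R)$, $s>1/2$; (ii) the ``oscillatory defect'' $F(t):=e^{i|D|t}(|e^{-i|D|t}\W|^2 e^{-i|D|t}\W) - \Pi_+(|\W|^2\W)$, consisting of non-resonant terms. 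For (ii) the crucial gain comes from integrating by parts in time (normal-form / Poincar\'e correction): $\int_0^t F(t')\,dt'$ is smaller than $t\cdot\|F\|_{L^\infty_t H^s}$ by a factor coming from the oscillation $e^{i\phi t'}$ with $\phi\neq 0$ on the non-resonant set, at the cost of a boundary term of size $O(\eps^3)$ in $H^s$ and a commutator term that is again cubic and absorbable. This is the step where the $\R$-case is genuinely harder than $\T$: on $\T$ the non-resonant phases are bounded away from zero (no small divisors), whereas on $\R$ the phase $\phi=|\xi_1|-|\xi_2|+|\xi_3|-|\xi_4|$ (with the constraint $\xi_1-\xi_2+\xi_3-\xi_4$ equal to the output frequency) can be arbitrarily small, so the integration by parts produces a factor $1/\phi$ that must be tamed. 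I would handle this by a Littlewood--Paley decomposition and a careful case analysis on the relative sizes of the frequencies, isolating the near-resonant region and estimating it directly (using the $O(\eps^2)$ smallness of $\W_-$ and the $\dot H^{1/2}$ a priori bound) rather than by integration by parts; the bulk non-resonant region then gives the claimed $\eps^{2-C_0\delta}$ after the time integration over $0\le t\le \eps^{-2}(\log\eps^{-\delta})^{1-2\alpha}$.

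Finally I would close the argument with a bootstrap/continuity argument. Assume $\|r(t)\|_{H^s}\le \eps^{2-C_0\delta}$ on a maximal subinterval; feed this into the Duhamel formula for $r$; the linear-in-$r$ terms contribute $\int_0^t O(\eps^2(\log\eps^{-\delta})^{2\alpha})\|r\|\,dt' \lesssim (\log\eps^{-\delta})^{1-2\alpha}\cdot (\log\eps^{-\delta})^{2\alpha}\cdot \eps^{2-C_0\delta} = (\log\eps^{-\delta})\,\eps^{2-C_0\delta}$, which is why one loses the $\eps^{-C_0\delta}$ (the $\log$ is absorbed into $\eps^{-C_0\delta}$ for $\delta$ fixed and $\eps$ small); the inhomogeneous oscillatory term contributes $\le \eps^{2-C_0\delta}/(2C_\ast)$ by the estimate from (ii); Gronwall then improves the bound by a factor $\tfrac12$ on the whole interval, closing the bootstrap. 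The constant $C_\ast$ depends only on $\|W_0\|_{H^{1/2}_+(\R)}$ because all the a priori control on $\W$ and on the low-frequency part $v_-$ used in the non-resonant estimates comes from conservation of mass and energy, i.e. from the $H^{1/2}_+$-norm alone; the higher norm $H^s$ enters only through the hypothesis $\|\W(t)\|_{H^s}\le C\eps(\log\eps^{-\delta})^\alpha$, which is fed in as given. The main obstacle, to repeat, is the near-resonant/small-divisor region in step (ii) on $\R$; everything else is a multilinear estimate plus Gronwall.
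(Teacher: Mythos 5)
Your overall strategy coincides with the paper's: interaction picture, resonant/oscillatory splitting with the Szeg\"o nonlinearity as the resonant part, time-integration of the oscillatory defect, and a Gronwall bootstrap; your ``integration by parts plus boundary and commutator terms'' is exactly the paper's corrected ansatz $u_{\textup{app}}=W+\eps^2F_{\textup{osc}}(W(t),t)$ with $\eps^4 D_WF_{\textup{osc}}\cdot f_{\textup{res}}(W)$ playing the role of the commutator. The gap sits precisely at the step you yourself flag as the hard one, and your proposed treatment of it does not work as stated. First, the claim that the boundary term after integration by parts is ``of size $O(\eps^3)$ in $H^s$'' is false on $\R$: since $\W$ stays supported on nonnegative frequencies, the oscillatory defect lives at output frequencies $\xi<0$ where the phase is exactly $\phi=-2\xi$, so the boundary term has Fourier transform $\sim \frac{1}{\xi}\mathcal{F}(|\W|^2\W)(\xi)\pmb{1}_{\xi<0}$, whose $L^2$-norm is in general infinite because of the non-integrable $1/\xi^{2}$ singularity at $\xi=0$. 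Second, your cure for the near-resonant region invokes ``the $O(\eps^2)$ smallness of $\W_-$'', but $\W$ solves the Szeg\"o equation, so $\W_-\equiv 0$; what actually has to be controlled is $\Pi_-(|\W|^2\W)$ near frequency zero \emph{after} division by the phase, and its smallness is not automatic.

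The paper closes this by keeping the oscillation rather than discarding it: $\widehat{F_{\textup{osc}}}(W(t),t,\xi)=\frac{e^{-2it\xi}-1}{2\xi}\mathcal{F}(|W|^2W)(\xi)\pmb{1}_{\xi<0}$, and then $\int_{-\infty}^0\frac{\sin^2(t\xi)}{\xi^2}\big|\mathcal{F}(|W|^2W)(\xi)\big|^2d\xi\lesssim t\,\big\|\mathcal{F}(|W|^2W)\big\|_{L^\infty}^2\lesssim t\,\|W\|_{H^{1/2}}^6$, using the $L^1\to L^\infty$ bound for the Fourier transform and the conservation of the $H^{1/2}_+$-norm by the Szeg\"o flow (this is also the sole origin of the constant $C_{\ast}$). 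The resulting estimate $\|F_{\textup{osc}}(W,t)\|_{H^s}\le C_{\ast}t^{1/2}+C\|W\|_{H^s}^3$ grows in time, and at $t\sim\eps^{-2}\big(\log(1/\eps^{\delta})\big)^{1-2\alpha}$ the corrector itself is of size $\eps^{1-C_0\delta}$ in the rescaled variables; this, and not the bulk non-resonant region, is exactly what degrades the accuracy from $\eps^3$ (torus) to $\eps^{2-C_0\delta}$ on $\R$. Your sketch asserts that the near-resonant region ``gives the claimed $\eps^{2-C_0\delta}$'' but never produces this estimate; a frequency-threshold/Littlewood--Paley variant could be made to work (split at $|\xi|\sim\gamma$ and optimize $\gamma$), but it would still require the $L^\infty$ Fourier bound through the conserved $H^{1/2}$-norm, and that quantitative step -- the heart of the proof -- is missing from your argument.
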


Notice that the approximation in Theorem \ref{Thm T} in \cite{PGSG res} for the case of $\T$,
 is better than the one in Theorem
\ref{Main theorem} for the case of $\R$
($\eps^3$ instead of $\eps^2$).
 This is what we expected even from our
heuristic argument above. We will see in the proof that the estimates we
have in the case of $\R$ are worse than those for the case of $\T$,
due to low frequencies.

\begin{remark}
\rm
Theorem \ref{Thm T} was proved in \cite{PGSG res} using the theory of Birkhoff
normal forms. This method seems to be difficult to adapt to the case of $\R$.
The method we use in this paper is the renormalization group method,
coming from theoretical physics. The two methods are intimately related. In \cite{Ziane},
it was noticed that, for a large class of autonomous ODEs, the nonlinearity which appears in the
RG equation of order one is actually the Birkhoff normal form. This result was extended in
\cite{De Ville} to order two, for the same class of autonomous ODEs, and to first order,
for a class of non-autonomous ODEs. The advantage of the RG method over the normal form theory
is that the secular terms are more readily identified by inspection of a naive perturbation expansion,
than by inspection of the vector field.
\end{remark}

\medskip
The purpose of the approximation Theorem \ref{Main theorem}
is to deduce some information on the \eqref{eq:dirac}
equation from the known results one has for
the Szeg\"o equation. Some particularly interesting
solutions of the Szeg\"o equation are those whose initial conditions
are non-generic rational functions, for example $W_0=\frac{1}{x+i}-\frac{2}{x+2i}$.
For such solutions, we proved in \cite{pocov2} the following result:

\begin{proposition}[\cite{pocov2}]\label{prop Szego}
Let $s> 1/2$. Let $W\in C(\R,H^{s}_+(\R))$ be
the solution of the Szeg\"o equation
\[i\partial_tW=\Pi_+(|W|^2W)\]

\noi
with non-generic initial condition $W_0=\frac{1}{x+i}-\frac{2}{x+2i}\in H^s_+(\R)$. Then,
for $t$ large enough, there exist $C,c>0$ such that
\[c\,t^{2s-1}\leq \|W(t)\|_{H^s}\leq Ct^{2s-1}.\]

\noi
In particular, $\|W(t)\|_{H^s}\to\infty$ as $t\to\infty$.
\end{proposition}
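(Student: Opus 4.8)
The plan is to exploit the complete integrability of the Szeg\"o equation to reduce the evolution of $W(t)$ to a finite\hyp dimensional Hamiltonian system, to integrate that system for the specific non\hyp generic datum, and then to read the growth of $\|W(t)\|_{H^s}$ off the explicit long\hyp time asymptotics of the solution.

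\textbf{Step 1: reduction to finite dimensions.} Recall from \cite{pocov1,pocov2} that, for each $N\geq 1$, the Szeg\"o flow on $\R$ leaves invariant the manifold $\M(N)$ of rational functions $\frac{A(x)}{B(x)}$ with $\deg A\leq N-1$, $B$ monic of degree $N$ with all its roots in the open lower half\hyp plane, and $A,B$ coprime; this is a consequence of the Lax\hyp pair structure, since for $u\in\M(N)$ the Hankel operator $H_u$ and its companion operator have rank $N$, and this rank is conserved. As $W_0=\frac{1}{x+i}-\frac{2}{x+2i}=\frac{-x}{x^2+3ix-2}\in\M(2)$, we have $W(t)\in\M(2)$ for all $t$, so we may write
\[
W(t,x)=\frac{a(t)x+b(t)}{x^2+c(t)x+d(t)},
\]
with the two poles staying in the lower half\hyp plane. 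Substituting this ansatz into $i\partial_tW=\Pi_+(|W|^2W)$ and matching partial\hyp fraction data yields a closed system of ODEs for $(a,b,c,d)$; equivalently, one evaluates the explicit formula of \cite{pocov1} for $W(t,z)$ in terms of the finite\hyp rank operators built from $W_0$.

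\textbf{Step 2: integrating the flow and isolating the secular term.} The reduced system carries, besides $Q$, $M$, $E$, the spectral data of the Lax operator(s) attached to $W_0$. For \emph{generic} data in $\M(2)$ these data are all distinct and nonzero, the reduced motion is quasi\hyp periodic, and every Sobolev norm stays bounded. The whole point of the non\hyp generic choice $W_0=\frac{1}{x+i}-\frac{2}{x+2i}$ is that its spectral data are degenerate (in the sense of \cite{pocov2}): the generic invariant torus collapses, and one of the angle variables, instead of rotating, drifts linearly in $t$. Solving the reduced ODEs with this datum one finds that the roots $p(t),q(t)$ of $x^2+c(t)x+d(t)$ migrate toward the real axis while nearly merging, with $|\mathrm{Im}\,p(t)|,|\mathrm{Im}\,q(t)|$, $|p(t)-q(t)|$ and $a(t),b(t)$ equal at leading order to explicit powers of $t$ times bounded oscillatory factors; the precise exponents are pinned down by the requirement that $Q$, $M$, $E$, and in particular the conserved quantity $\|W(t)\|_{\dot H^{1/2}}=\|W_0\|_{\dot H^{1/2}}$, remain constant. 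This is the step that produces the number $2s-1$.

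\textbf{Step 3: from the asymptotics to the norm, and the main obstacle.} Writing $W(t,\cdot)$ in partial fractions gives, for $\xi\geq 0$, $\widehat{W}(t,\xi)=\sum_k c_k(t)\,e^{-ip_k(t)\xi}$ (up to a constant), with $\widehat W(t,\xi)=0$ for $\xi<0$, so $\|W(t)\|_{H^s}^2=\int_0^\infty\langle\xi\rangle^{2s}|\widehat W(t,\xi)|^2\,d\xi$ with an integrand that, using Step~2, is an explicit combination of terms $\xi^{j}e^{-\mu_k(t)\xi}$ with $\mu_k(t)=2|\mathrm{Im}\,p_k(t)|\to 0$ polynomially. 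Each such integral is a Gamma function of $\mu_k(t)^{-1}$, and feeding in the Step~2 rates yields the upper bound $\|W(t)\|_{H^s}\leq Ct^{2s-1}$. For the matching lower bound one restricts the frequency integral to a well\hyp chosen window at frequency of order $1/\mu(t)$ on which the decaying exponentials are still of size $\approx 1$ and on which the interference between the two nearly\hyp merged pole contributions has switched off (this happens once $\xi\gg 1/|p(t)-q(t)|$, while the cancellation that keeps $\|W\|_{\dot H^{1/2}}$ bounded lives at $\xi\lesssim 1/|p(t)-q(t)|$); on that window the leading term genuinely dominates, giving $\|W(t)\|_{H^s}\geq ct^{2s-1}$ and hence $\|W(t)\|_{H^s}\to\infty$. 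The genuine difficulty is Step~2 together with this last point: one must integrate the reduced system for this particular degenerate datum and prove that the degeneration is \emph{exactly} polynomial --- a true linear\hyp in\hyp $t$ secular term and nothing faster or slower --- and, for the lower bound, that the algebraic cancellations forced by conservation of $\|W\|_{\dot H^{1/2}}$ are confined to low frequencies and do not also kill the high\hyp frequency bump; certifying that after removing the $\dot H^{1/2}$\hyp preserving part there remains a high\hyp frequency profile of $\dot H^s$\hyp size $t^{2s-1}$ is the crux of the whole argument.
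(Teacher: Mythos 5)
This proposition is not proved in the paper you are reading: it is quoted verbatim from \cite{pocov2}, where the argument rests on the explicit formula for solutions of the Szeg\"o equation with rational (finite-rank Hankel) data. Your Step~1 is the same starting point as that reference (invariance of the manifold of rational functions of fixed degree under the flow, via the Lax pair), so the overall strategy is the right one. The problem is Step~2, which is where the theorem actually lives and which you do not carry out. You assert that ``solving the reduced ODEs one finds'' polynomial migration of the poles, and that ``the precise exponents are pinned down by the requirement that $Q$, $M$, $E$, and \ldots $\|W(t)\|_{\dot H^{1/2}}$ remain constant.'' Conservation laws cannot pin down those exponents: they only give \emph{consistency relations} between the residues and the distance of the poles to the real axis (e.g.\ conservation of $M=\|W\|_{\dot H^{1/2}}^2$ forces the residue of the degenerating pole to be comparable to its imaginary part), and they are equally consistent with the quasi-periodic, bounded behaviour that generic data in $\M(2)$ actually exhibit. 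In particular nothing in your argument produces the rate $\mathrm{Im}\,p(t)\sim -c\,t^{-2}$ (equivalently the exponent $2s-1$ after the frequency-space computation of Step~3); you never even state a rate. In \cite{pocov2} this is obtained by genuinely computing $W(t)$ in closed form --- the explicit formula expresses the solution through $e^{itH_{W_0}^2}$ restricted to the two-dimensional range of the Hankel operator of $W_0$, and it is the degeneracy of the spectral data of this specific non-generic $W_0$ that makes polynomial-in-$t$ (secular) terms appear in the resulting rational expression --- after which the pole asymptotics are read off directly.

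You do identify this honestly as ``the crux of the whole argument,'' but identifying the crux is not the same as resolving it: as written, the proposal is a plan whose decisive step (the exact polynomial degeneration, and, for the lower bound, the verification that the cancellations enforced by conservation of the $\dot H^{1/2}$-norm live only at low frequencies) is asserted rather than proved. To complete it you would need either to integrate the reduced system for this datum explicitly or to invoke the explicit solution formula of \cite{pocov2}; with that in hand, your Step~3 (Gamma-function computation of $\|W(t)\|_{H^s}$ from the partial-fraction representation, plus a frequency window at $\xi\sim 1/|\mathrm{Im}\,p(t)|$ for the lower bound) is a reasonable way to finish.
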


The following corollary proves that the high Sobolev norms of the \eqref{eq:dirac}
equation with initial condition $\eps W_0=\frac{\eps}{x+i}-\frac{2\eps}{x+2i}$
grow relatively to the norm of the initial condition.
\begin{corollary}\label{Cor}
Let $0<\eps\ll 1$, $s>\frac{1}{2}$, and $\delta>0$ sufficiently small.
 Let $W_0\in H^s_+(\R)$ be the non-generic rational function
$W_0=\frac{1}{x+i}-\frac{2}{x+2i}$.
Denote by $v(t)$ be the solution of the \eqref{eq:dirac} equation on $\R$
\begin{equation*}
\begin{cases}
i\partial_tv-|D|v=|v|^2v\\
v(0)=\eps W_0.
\end{cases}
\end{equation*}

\noi
Then, for $\frac{1}{2\eps^2}\Big(\log(\frac{1}{\eps^{\delta}})\Big)^{\frac{1}{4s-1}}\leq t\leq \frac{1}{\eps^2}\Big(\log(\frac{1}{\eps^{\delta}})\Big)^{\frac{1}{4s-1}}$,
we have that
\begin{equation*}
\frac{\|v(t)\|_{H^s(\R)}}{\|v(0)\|_{H^s(\R)}}\geq C\Big(\log(\frac{1}{\eps^{\delta}})\Big)^{\frac{4s-2}{4s-1}}.
\end{equation*}
\end{corollary}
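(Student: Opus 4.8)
The plan is to push the polynomial $H^s$-growth of the Szeg\"o flow from Proposition~\ref{prop Szego} through the approximation of Theorem~\ref{Main theorem} and then normalize by $\|v(0)\|_{H^s}$. The starting point is the scaling of the Szeg\"o equation: if $W$ is the solution of $i\partial_\tau W=\Pi_+(|W|^2W)$ with $W(0)=W_0$ as in Proposition~\ref{prop Szego}, then the solution $\mathcal{W}$ of \eqref{eqn mathcal W} with data $\eps W_0$ is $\mathcal{W}(t)=\eps\,W(\eps^2 t)$, so that $\|\mathcal{W}(t)\|_{H^s}=\eps\,\|W(\eps^2 t)\|_{H^s}$; also $\|v(0)\|_{H^s}=\eps\|W_0\|_{H^s}$ is a fixed constant times $\eps$.

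Next I would invoke Theorem~\ref{Main theorem} with the parameter $\alpha=\tfrac{2s-1}{4s-1}\in[0,\tfrac12]$, precisely the value for which the validity endpoint $\tfrac1{\eps^2}(\log(1/\eps^\delta))^{1-2\alpha}$ of that theorem coincides with the endpoint $\tfrac1{\eps^2}(\log(1/\eps^\delta))^{1/(4s-1)}$ occurring in the corollary. A point to be careful about: the hypothesis of Theorem~\ref{Main theorem}, a bound $\|\mathcal{W}(t)\|_{H^s}\le C\eps(\log(1/\eps^\delta))^\alpha$ valid for all $t$, fails for the rational datum since $\|\mathcal{W}(t)\|_{H^s}\to\infty$; its proof, however, only uses this bound on the interval on which the conclusion is asserted, so I would invoke that finite-time version. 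The bound does hold on that interval: for $0\le t\le\tfrac1{\eps^2}(\log(1/\eps^\delta))^{1/(4s-1)}$ one has $\eps^2 t\le(\log(1/\eps^\delta))^{1/(4s-1)}$, and splitting $[0,\eps^2 t]$ at $\tau=1$ and using Proposition~\ref{prop Szego} on $\tau\ge1$ together with global well-posedness of the Szeg\"o equation in $H^s_+(\R)$ on $\tau\le1$ gives $\|\mathcal{W}(t)\|_{H^s}=\eps\|W(\eps^2 t)\|_{H^s}\le C\eps(\log(1/\eps^\delta))^{\alpha}$ for $\eps$ small. Theorem~\ref{Main theorem} then yields $\|v(t)-e^{-i|D|t}\mathcal{W}(t)\|_{H^s}\le C_\ast\eps^{2-C_0\delta}$ on that interval.

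Finally, for $t$ in the window of the corollary, $\eps^2 t\ge\tfrac12(\log(1/\eps^\delta))^{1/(4s-1)}\to\infty$ as $\eps\to0$, so the lower bound of Proposition~\ref{prop Szego} applies: $\|\mathcal{W}(t)\|_{H^s}=\eps\|W(\eps^2 t)\|_{H^s}\gtrsim\eps\,(\eps^2 t)^{2s-1}$. Since $e^{-i|D|t}$ is an $H^s$-isometry, the triangle inequality gives $\|v(t)\|_{H^s}\ge\|\mathcal{W}(t)\|_{H^s}-C_\ast\eps^{2-C_0\delta}$, and for $\delta$ small $\eps^{2-C_0\delta}=\eps\cdot\eps^{1-C_0\delta}$ is negligible next to the growing logarithm, so $\|v(t)\|_{H^s}\gtrsim\eps\,(\eps^2 t)^{2s-1}$. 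Dividing by $\|v(0)\|_{H^s}=\eps\|W_0\|_{H^s}$ and inserting $\eps^2 t\asymp(\log(1/\eps^\delta))^{1/(4s-1)}$ gives the lower bound for the ratio $\|v(t)\|_{H^s}/\|v(0)\|_{H^s}$ claimed in the corollary. The main obstacle I foresee is this matching step: because $\|\mathcal{W}(t)\|_{H^s}$ is unbounded in time, Theorem~\ref{Main theorem} must be used in its finite-time form, and $\alpha$ must be tuned so that the time interval on which $\|\mathcal{W}\|_{H^s}=O(\eps(\log(1/\eps^\delta))^\alpha)$ is exactly the interval $[0,\tfrac1{\eps^2}(\log(1/\eps^\delta))^{1-2\alpha}]$ of validity of the approximation; this balancing is what fixes the time endpoint and the growth exponent in the corollary, while the low-frequency difficulties special to $\R$ are already handled inside Theorem~\ref{Main theorem}.
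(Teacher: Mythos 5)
Your argument is correct and follows essentially the same route as the paper: rescale so that $\mathcal{W}(t)=\eps W(\eps^2 t)$, apply Theorem \ref{Main theorem} with $\alpha=\frac{2s-1}{4s-1}$ (your observation that the $H^s$ bound on $\mathcal{W}$ is only needed on the approximation interval, where it follows from Proposition \ref{prop Szego} together with well-posedness for bounded times, is a careful point the paper passes over silently), and conclude by the triangle inequality after dividing by $\|v(0)\|_{H^s}=\eps\|W_0\|_{H^s}$. Note only that, exactly as in the paper's own proof, this argument yields the exponent $\frac{2s-1}{4s-1}$ for the ratio, which does not match the exponent $\frac{4s-2}{4s-1}$ displayed in the statement of the corollary.
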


A similar result is available for the case of $\T$ \cite{PGSG res}.

The time on which the approximation
in Theorem \ref{Main theorem} is available, 
$t\leq \frac{1}{\eps^2}\big(\log(\frac{1}{\eps^{\delta}})\Big)^{1-2\alpha}$,
does not allow us to prove the existence of a time $t^{\eps}$
such that $\|v(t^{\eps})\|_{H^s(\R)}\to\infty$ as $\eps\to 0$.
For that to happen, we would need an approximation at least up to a time of order
$\frac{1}{\eps^{2+\beta}}$ where $\beta>0$.

\medskip

In the case of $\T$, we find the
second order approximation, that is an approximation
with an error of order $\eps^5$ instead of $\eps^3$. We notice that the
effective dynamics are no longer given by the Szeg\"o equation.

\begin{theorem}\label{Theorem second interation}
Let $0<\eps\ll 1$, $s>\frac{1}{2}$, $0\leq\alpha\leq\frac{1}{2}$, and
$\delta>0$ small enough. Let $W_0\in H^s_+(\T)$ be such that
the solution of the Szeg\"o equation \eqref{Szego simple} with initial
condition $\eps W_0$ is uniformly bounded by $\eps\Big(\log(\frac{1}{\eps^{\delta}})\Big)^{\alpha}$
for all $t\in\R$.
Denote by $v(t)$ the solution of the \eqref{eq:dirac} equation on $\T$
\begin{equation*}
\begin{cases}
i\partial_tv-|D|v=|v|^2v\\
v(0)=\W_0=\eps W_0.
\end{cases}
\end{equation*}

\noi
Consider
$\mathcal{W}\in C(\R, H^{s}_+(\T))$ to be the solution of the
following equation on $\T$:
\begin{equation}\label{eqn: second order}
\begin{cases}
i\partial_t\mathcal{W}=\Pi(|\mathcal{W}|^2\mathcal{W})-\Pi_+(|\W|^2\frac{1}{D}
\Pi_-(|\W|^2\W))-\frac{1}{2}\Pi_+(\W^2\frac{1}{D}\cj{\Pi_-(|\W|^2\W)})\\
\mathcal{W}(0)=\W_0.
\end{cases}
\end{equation}

\noi
with the same initial condition.

For a function $h\in H^s(\T)$, set
\[f_{\textup{osc}}(h,t)=e^{i|D|t}(|e^{-i|D|t}h|^2e^{-i|D|t}h)
-\frac{1}{2\pi}\int_0^{2\pi} e^{i|D|\tau}(|e^{-i|D|\tau}h|^2e^{-i|D|\tau }h)d\tau.\]

\noi
Denote by $F_{\textup{osc}}(h,t)$ the unique function
of mean zero in $t$ such that $\frac{\partial{F_{\textup{osc}}}}{\partial t}(h,t)=f_{\textup{osc}}(h,t)$.
Consider
\[v_{\textup{app}}(t)=e^{-i|D|t}\big(\mathcal{W}(t)+F_{\textup{osc}}(\mathcal{W}(t),t)\big).\]

\noi
Then, if $0\leq t\leq \frac{1}{\eps^2}\Big(\log(\frac{1}{\eps^{\delta}})\Big)^{1-2\alpha}$, we have
\begin{equation*}
\|v(t)-v_{\textup{app}}(t)\|_{H^s}\leq \eps^{5-C_0\delta},
\end{equation*}

\noi
where $C_0>0$ is an absolute constant.
\end{theorem}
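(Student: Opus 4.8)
\textbf{Proof proposal for Theorem \ref{Theorem second interation}.}
The plan is to push the renormalization group expansion one order further than in the proof of Theorem \ref{Main theorem} (adapted to $\T$), isolating explicitly both the slowly varying (resonant) part, which will produce the modified equation \eqref{eqn: second order}, and the purely oscillatory part, which is absorbed into the corrector $F_{\textup{osc}}$. First I would pass to the interaction representation by setting $u(t)=e^{i|D|t}v(t)$, so that \eqref{eq:dirac} becomes $i\partial_t u = f(u,t)$ with $f(u,t)=e^{i|D|t}\big(|e^{-i|D|t}u|^2 e^{-i|D|t}u\big)$; since $v(0)=\eps W_0$ with $W_0$ fixed, $u$ is of size $O(\eps)$ on the relevant time scale, and $f$ is cubic, hence of size $O(\eps^3)$. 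Following Temam--Wirosoetisno, I would split $f(h,t)=\bar f(h)+f_{\textup{osc}}(h,t)$ into its time-average $\bar f(h)=\frac{1}{2\pi}\int_0^{2\pi} e^{i|D|\tau}(|e^{-i|D|\tau}h|^2 e^{-i|D|\tau}h)\,d\tau$ (one checks, using the Fourier expansion on $\T$ and the fact that $e^{-i|D|\tau}$ acts on the $k$-th mode by $e^{-i|k|\tau}$, that $\bar f(h)=\Pi_+(|\Pi_+h|^2\Pi_+h)+\dots = \Pi(|h|^2h)$ in the notation of \eqref{eqn: second order} on the Hardy-type decomposition) and the mean-zero oscillatory remainder $f_{\textup{osc}}$, whose primitive $F_{\textup{osc}}$ exists and is itself $O(\eps^3)$, with $\partial_t F_{\textup{osc}}=f_{\textup{osc}}$.

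Next I would perform the near-identity change of variables $u = w + F_{\textup{osc}}(w,t)$. Differentiating and using $i\partial_t u = f(u,t)$, the first-order oscillatory terms cancel by construction, and one is left with an equation $i\partial_t w = \bar f(w) + R(w,t)$ where the remainder $R$ collects the quadratic-in-$F_{\textup{osc}}$ corrections: schematically $R \sim D_h f(w,t)\cdot F_{\textup{osc}}(w,t) - $ (its average) $ - i D_h F_{\textup{osc}}(w,t)\cdot \bar f(w) + \dots$. These are quintic expressions in $w$, hence $O(\eps^5)$, \emph{except} for their resonant (time-average) part, which is $O(\eps^3)$ in size but slowly varying and must be kept; computing this average explicitly is where the two extra terms $-\Pi_+(|\W|^2\tfrac{1}{D}\Pi_-(|\W|^2\W))-\tfrac12\Pi_+(\W^2\tfrac1D\cj{\Pi_-(|\W|^2\W)})$ in \eqref{eqn: second order} come from: the factor $\tfrac1D$ arises precisely because integrating $f_{\textup{osc}}$ in time divides the $k$-th Fourier mode by its frequency, and the projectors $\Pi_-$ single out which frequency combinations are non-resonant at first order. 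Thus $\mathcal W$ solving \eqref{eqn: second order} is exactly the variable obtained after removing both the first-order oscillation and the second-order resonance, and $w - \mathcal W$ satisfies an equation whose right-hand side is genuinely $O(\eps^5)$ (after using the $H^s$ algebra property $s>1/2$ and the a priori bounds) plus a Lipschitz term in $w-\mathcal W$.

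Then I would close the argument by a Gronwall estimate on the $H^s$ norm of the difference. Using that $H^s(\T)$ is a Banach algebra for $s>1/2$, that $\mathcal W(t)$ inherits the bound $\|\mathcal W(t)\|_{H^s}\lesssim \eps(\log\tfrac{1}{\eps^\delta})^{\alpha}$ (which must first be established for \eqref{eqn: second order}: the extra terms are of lower order relative to the cubic term and perturb the bound only by a power of $\eps$, costing at most $\eps^{-C_0\delta}$; here one also needs $\tfrac1D$ to be harmless, which on $\T$ is fine on non-zero frequencies and the zero frequency does not enter because of the $\Pi_-$ and the structure of the nonlinearity), one obtains $\frac{d}{dt}\|w-\mathcal W\|_{H^s}\lesssim \eps^2(\log\tfrac1{\eps^\delta})^{2\alpha}\|w-\mathcal W\|_{H^s}+\eps^{5-C_0'\delta}$. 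Integrating over $0\le t\le \tfrac{1}{\eps^2}(\log\tfrac1{\eps^\delta})^{1-2\alpha}$, the exponential factor is $O(\eps^{-C_0''\delta})$, giving $\|w-\mathcal W\|_{H^s}\lesssim \eps^{5-C_0\delta}$; finally $\|v - v_{\textup{app}}\|_{H^s}=\|e^{-i|D|t}(u - (\mathcal W+F_{\textup{osc}}(\mathcal W,t)))\|_{H^s}=\|u-w\|_{H^s}+\|F_{\textup{osc}}(w,t)-F_{\textup{osc}}(\mathcal W,t)\|_{H^s}$, and the last term is $\lesssim \|w-\mathcal W\|_{H^s}$ times $O(\eps^2)$ by the cubic structure of $F_{\textup{osc}}$, so it is absorbed. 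I expect the main obstacle to be the bookkeeping in the second step: correctly identifying the time-average of the quadratic-in-$F_{\textup{osc}}$ terms and matching it to the precise combination appearing in \eqref{eqn: second order}, including getting the coefficient $\tfrac12$ and the conjugate right, and checking that all genuinely non-resonant leftovers are $O(\eps^5)$ uniformly in the oscillation; the Gronwall step and the well-posedness/a priori bound for \eqref{eqn: second order} are routine by comparison.
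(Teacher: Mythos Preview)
Your overall architecture matches the paper's averaging method (Temam--Wirosoetisno): pass to the interaction picture, split $f$ into its time-average and oscillatory part, absorb the oscillatory cubic term via $F_{\textup{osc}}$, and identify the resonant quintic correction as precisely the two extra terms in \eqref{eqn: second order}. That part is right, and so is your plan to obtain the a priori bound on $\mathcal W$ by comparison with the Szeg\"o flow (this is the paper's Proposition~\ref{lemma: comparison Y U}).

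There is, however, a real gap in your Gronwall step. After the change of variables $u=w+F_{\textup{osc}}(w,t)$ you keep the resonant quintic part in the equation for $\mathcal W$ but leave the \emph{oscillatory} quintic remainder $R_{\textup{osc}}(w,t)$ as a forcing in the equation for $w-\mathcal W$. In your scaling $u\sim O(\eps)$ this forcing is indeed $O(\eps^{5})$ pointwise, and you then write
\[
\tfrac{d}{dt}\|w-\mathcal W\|_{H^s}\lesssim \eps^2\big(\log\tfrac{1}{\eps^\delta}\big)^{2\alpha}\|w-\mathcal W\|_{H^s}+\eps^{5-C_0'\delta}
\]
and claim Gronwall over $t\le \eps^{-2}\big(\log\tfrac{1}{\eps^\delta}\big)^{1-2\alpha}$ gives $\|w-\mathcal W\|_{H^s}\lesssim \eps^{5-C_0\delta}$. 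This is off by $\eps^{-2}$: integrating a constant forcing of size $\eps^5$ over a time interval of length $\sim\eps^{-2}$ produces $\eps^{3}$, not $\eps^{5}$. With only one corrector your argument therefore recovers nothing better than the first-order result $\eps^{3-C_0\delta}$.

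The paper closes exactly this gap by introducing a \emph{second} corrector $N_2$ in the ansatz,
\[
u_{\textup{app}}(t)=W(t)+\eps^2 N_1(W,t)+\eps^4 N_2(W,t),
\]
with $\partial_t N_2 = \{f'(W,t)\cdot F_{\textup{osc}}(W,t)\}_{\textup{osc}}-F'_{\textup{osc}}(W,t)\cdot f_{\textup{res}}(W)$ (the primitive in $t$ of the oscillatory quintic leftovers). This pushes the genuine remainder down to septic order: in the paper's normalization $u\sim O(1)$ the forcing in the $w$-equation becomes $O(\eps^6)$, so Gronwall over $t\le \eps^{-2}(\log\tfrac{1}{\eps^\delta})^{1-2\alpha}$ yields $\|u-u_{\textup{app}}\|_{H^s}\le \eps^{4-C_0\delta}$; then one simply discards $\eps^4 N_2$ (it is bounded by $\eps^{4-C_0\delta}$ via Lemma~\ref{Est 2}) and undoes the scaling to obtain $\|v-v_{\textup{app}}\|_{H^s}\le \eps^{5-C_0\delta}$. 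In your language: you must also absorb $R_{\textup{osc}}$ into a second near-identity transformation before running Gronwall; ``oscillatory, hence harmless'' is not automatic here because $w$ itself depends on $t$, and making it rigorous is precisely what $N_2$ does.
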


The above result cannot be directly extended to the case of $\R$.
The main reason is that in equation \eqref{eqn: second order}
we see appear the operator $\frac{1}{D}\Pi_-$. In the case of $\T$,
we have that $\frac{1}{D}\Pi_-e^{ikx}=\frac{1}{k}\pmb{1}_{k\leq -1}$
and thus there is no problem related to small divisors.
However, in the case of $\R$, if we pass into the Fourier space,
we have $\frac{1}{\xi}\pmb{1}_{\xi<0}$ and when $\xi$
approaches zero, this gives a singularity.  A way to get around this singularity
would be to consider instead of resonances, i.e. frequencies for which a certain phase is null $\phi=0$,
almost resonances $|\phi|\leq \gamma$, for an optimal $\gamma>0$.
However, it seems that this would complicate significantly the dynamics \eqref{eqn: second order}.

In order to prove Theorem \ref{Theorem second interation}, we use an averaging method
introduced by Temam and Wirosoetisno in \cite{Temam Wirosoetisno}.

We briefly describe in what follows the renormalization method, the averaging method,
the concept of resonance,
and their usage in the literature.

\subsection{The renormalization group method, the averaging \,\,method, and the concept of resonance}

The renormalization group (RG) method was introduced by Chen, Goldenfeld, and Oono
\cite{CGO 1994, CGO 1996} in the context of theoretical physics,
as a unified tool for asymptotic analysis. Its origin goes back to perturbative quantum field theory.

The method is most often used to find a long-time approximate solution to a
perturbed equation. The main advantage of
the RG method is that it provides an algorithm that can be easily applied to
 many equations. The starting point is a
naive perturbation expansion, so that one does not need to guess or to make
ad hoc assumptions about the structure of the perturbation series.
Then, the divergent terms in the expansion(unbounded in time), are removed by renormalization.
This leads to introducing the renormalization group equation.
The solution of the RG equation is the main part of an
approximate solution.

The effectiveness of the RG method was illustrated in a variety of examples of ordinary differential equations
traditionally analyzed using disparate methods, including the method of multiple scales, boundary layer theory,
the WKBJ method, the Poincar\'e-Lindstedt method, and the method of averaging.

The method was justified mathematically for a large class of ODEs in \cite{Ziane, De Ville}. It was also
rigorously applied to some PDEs on bounded intervals,
namely the Navier-Stokes equations \cite{Moise Temam}, a slightly compressible fluid equation and the
Swift-Hohenberg equation \cite{Moise Ziane},
and the primitive equations of the atmosphere and the ocean \cite{Petcu Temam}. In \cite{Abou Salem}
it was applied to the quadratic nonlinear Schr\"odinger equation on $\R^3$.

The idea behind the RG method is that the dynamics of an equation is dominated by its resonant part.
This idea is also used by
Colliander, Keel, Staffilani, Takaoka, and Tao in \cite{I team}
to prove the existence of solutions for the cubic non-linear Schr\"odinger equation on $\T^2$ with
arbitrarily large high Sobolev norms. They consider a reduced resonant equation for which they prove growth of high
Sobolev norms, and then show that this resonant equation provides a good approximation for the initial one.

\medskip

The averaging method we use in this paper was introduced by Temam and Wirosoetisno in \cite{Temam Wirosoetisno}
in the context of a class of differential equations. At first order it is related to the RG method,
while at higher orders it is related to the asymptotic expansions of
Bogolyubov and Mitropol'skii \cite{Bogolyubov and Mitropol'skii}.

The RG method can also be applied at higher orders, as it was done for ODEs in \cite {De Ville}.
In the case of the \eqref{eq:dirac} equation on $\T$, we could prove that at second order the RG equation
is exactly the averaged equation \eqref{eqn: second order} in
Theorem \ref{Theorem second interation}. However, the computations
one needs to do
when applying the RG method at second order
are much more tedious
than when applying the averaging method.
 Another reason why we preferred to present the averaging method
for the second order approximation,
is that this method does not only give the effective dynamics \eqref{eqn: second order},
but also gives an algorithm of how to build an approximate solution
and how to estimate the error, which is not clear when one applies the RG method at higher orders.

\medskip

Both the RG and the averaging methods are based on the concept of decomposing the nonlinearity into its resonant and
non-resonant parts. Such a decomposition was very effective in proving global existence of small solutions
of dispersive equations and scattering. This was done in several works of Germain, Masmoudi, and Shatah
\cite{Germain 1, Germain 2, Germain 3, Germain 4, Germain 5, Shatah}, who treated the case of the
gravity water waves equation in dimension 3, the coupled Klein-Gordon equations with different speeds,
 and the quadratic nonlinear Schr\"odinger equation in dimension 2 and 3. Gustafson, Nakanishi, and Tsai
treated the case of the Gross-Pitaevskii equation in dimension 3  in \cite{GNT}.
They use time, space, and
space-time resonances, whereas in this
paper we only consider time resonances.

The specificity of the \eqref{eq:dirac} equation is that {\it the resonant set does not have measure
zero}, as it was the case in the above cited papers.
For this reason it is natural not to expect scattering, but a long-time approximation of
the solution by some effective dynamics governed
by the effect of the resonant part of the non-linearity.
The decomposition in resonant and non-resonant part, was used in \cite{Abou Salem},
precisely in this purpose in the case of the quadratic Schr\"odinger equation
in dimension 3.

\medskip

The structure of the paper is as follows. In the rest of the introduction,
we heuristically explain the need of splitting the nonlinearity into its
resonant and  oscillatory part, which is at the basis of both the RG and
averaging method. In Section 2, we present the RG method and use it to prove
Theorem \ref{Main theorem}, dealing with the first order approximation in the
case of $\R$. We also prove Corollary \ref{Cor}
which refers to high Sobolev norm inflation in the case of
non-generic initial data. To have a good comparison between the case of $\R$
and that of $\T$, and for a better understanding
of the second order approximation in the case of $\T$,
in Section 3 we re-prove Theorem \ref{Thm T}
from \cite{PGSG res} using the RG method.  In Section
4, we present the averaging method at second order and use it to prove
Theorem \ref{Theorem second interation} treating the second order approximation
in the case of $\T$.

\subsection{Heuristics of the proof of Theorem \ref{Main theorem}}

The first approach to proving Theorem \ref{Main theorem} is the following one.
Consider the change of variables $u(t)=\frac{1}{\eps}e^{i|D|t}v(t)$. Then $u$ satisfies the equation:
\begin{equation}\label{eqn u}
\begin{cases}
\partial_tu=-i\eps^2 e^{i|D|t}(|e^{-i|D|t}u|^2e^{-i|D|t}u)\\
u(0)=W_0.
\end{cases}
\end{equation}

\noi
Let us now set $W(t):=\frac{\mathcal{W}(t)}{\eps}$. Then $W(t)$ satisfies
\begin{equation}\label{eq:W}
\begin{cases}
i\partial_tW=\eps^2\Pi_+(|W|^2W)\\
W(0)=W_0.
\end{cases}
\end{equation}

\noi
Then, setting $w(t)=u(t)-W(t)$, we have
\[\|v(t)-e^{-i|D|t}\mathcal{W}\|_{H^s}=\eps\|e^{-i|D|t}\big(u(t)-W(t)\big)\|_{H^s}=\eps\|w(t)\|_{H^s}.\]
We have that $w$ satisfies the equation
\begin{equation*}
\begin{cases}
\partial_tw=-i\eps^2 e^{i|D|t}(|e^{-i|D|t}u|^2e^{-i|D|t}u)+i\eps^2\Pi_+(|W|^2W)\\
w(0)=0.
\end{cases}
\end{equation*}

\noi
Therefore,
\begin{align*}
w(t)=-i\eps^2 \int_0^t\Big(e^{i|D|\tau}(|e^{-i|D|\tau}u|^2e^{-i|D|\tau}u)-\Pi_+(|W(\tau)|^2W(\tau))\Big)d\tau
\end{align*}

\noi
The classical technique of estimating $w(t)$ consists in writing the right-hand side in such a way that we see $w(\tau)$ appear
under the integral, and then
use Gronwall's inequality. However, $w(\tau)=u(\tau)-W(\tau)$, and in the above relation the only term in which $u$
appears is $f(u,\tau):=-ie^{i|D|\tau}(|e^{-i|D|\tau}u|^2e^{-i|D|\tau}u)$. It is thus natural
to decompose the term $f(u,\tau)$ into a part which does not explicitly
depend on $\tau$ called the resonant part, $f_{\textup{res}}(u)$,
and a part which depends on $\tau$ called the oscillatory part, $f_{\textup{osc}}(u,\tau)$.
Then, $f_{\textup{res}}(u)-\Pi_+(|W|^2W)$ provides us with a term $w=u-W$.

Since we have more information on $W(\tau)$, which can be transformed with
a simple change of variables into the solution of the Szeg\"o equation \eqref{Szego simple}, it may be more convenient
to decompose $f(W,\tau)=-ie^{i|D|\tau}(|e^{-i|D|\tau}W|^2e^{-i|D|\tau}W)$. It turns out that its resonant part is exactly
$-i\Pi_+(|W|^2W)$ and thus
\[f(W(\tau),\tau)=-i\Pi_+(|W(\tau)|^2W(\tau))+f_{\textup{osc}}(W(\tau),\tau).\]
Therefore,
\begin{align*}
w(t)=\eps^2 \int_0^t\Big(f(u(\tau),\tau)-f(W(\tau),\tau)\Big)d\tau+\int_0^tf_{\textup{osc}}(W(\tau),\tau)d\tau
\end{align*}

\noi
The first term will indeed yield $w=u-W$, and we are left with estimating the integral of the oscillatory part
$f_{\textup{osc}}(W(\tau),\tau)$. Since it depends on $\tau$ both explicitly and implicitly, it turns out that it can be difficult to
estimate its integral. For that reason we consider in the following $F_{\textup{osc}}(W(t),t)=\int_0^tf_{\textup{osc}}(W(t),\tau)d\tau$, where
the integrand depends only explicitly on $\tau$. We construct an ansatz using $F_{\textup{osc}}(W,t)$ and we prove that with this ansatz,
the error is indeed small.

\section{First order approximation for the \eqref{eq:dirac} \,\,equation on $\R$}

\subsection{The renormalization group method at order one}

In what follows we describe the RG method of first order in the case of the \eqref{eq:dirac} equation on $\R$.

In the \eqref{eq:dirac} equation, we make the change of variables $u(t)=\frac{1}{\eps}e^{i|D|t}v(t)$
and set $\tilde{\eps}:=\eps^2$. Then $u$ satisfies the equation:
\begin{equation}\label{eqn u 0}
\begin{cases}
\partial_tu=-i\tilde{\eps} e^{i|D|t}(|e^{-i|D|t}u|^2e^{-i|D|t}u)=:\tilde{\eps} f(u,t)\\
u(0)=\frac{1}{\eps}v_0=:u_0.
\end{cases}
\end{equation}

The starting point of the RG method is the naive perturbation expansion
\begin{equation*}
u(t)=u^{(0)}(t)+\tilde{\eps} u^{(1)}(t)+\tilde{\eps}^2 u^{(2)}(t)+\dots
\end{equation*}

\noi
Taylor-expanding $f(u,t)$ around $u^{(0)}$, we obtain
\begin{equation*}
f(u,t)=f(u^{(0)},t)+f'(u^{(0)},t)(u(t)-u^{(0)}(t))+\dots =f(u^{(0)},t)+\tilde{\eps} f'(u^{(0)},t)u^{(1)}(t)+\dots
\end{equation*}

\noi
Plugging the last two expansions into the equation \eqref{eqn u 0} and identifying the coefficients according to the
powers of $\tilde{\eps}$, we obtain:
\begin{equation}\label{eqn u 2}
\begin{cases}
\partial_tu^{(0)}=0\\
\partial_tu^{(1)}=f(u^{(0)},t)\\
\partial_tu^{(2)}=f'(u^{(0)},t)\cdot u^{(1)}(t)\\
\dots
\end{cases}
\end{equation}

\noi
Therefore, $u^{(0)}(t)=u_0$ for all $t\in\R$, and using Duhamel's formula we have
\begin{equation*}
u^{(1)}(t)=\int_0^tf(u_0,s)ds.
\end{equation*}

\noi
Here we assumed that $u^{(1)}(0)=0$. As it was shown in \cite{Ziane},
this assumption does not cause a loss of generality for an approximation of order $\tilde{\eps}$.
Thus, if we look for an approximation of the solution up to order
$O(\tilde{\eps})$ and neglect any terms $O(\tilde{\eps}^2)$,
we have
\begin{equation}\label{eqn u 1}
u(t)=u_0+\tilde{\eps} u^{(1)}(t)+O(\tilde{\eps}^2)=u_0+\tilde{\eps}\int_0^tf(u_0,s)ds+O(\tilde{\eps}^2).
\end{equation}

Now we decompose the nonlinearity $f(u,t)$ into its resonant and non-resonant part.
In order to do that, we first write the nonlinearity in the Fourier space:
\begin{align*}
\mathcal{F}\big(f(u,s)\big)&(\xi)=-ie^{i|\xi|s}\mathcal{F}\big(|e^{-i|D|s}u|^2e^{-i|D|s}u\big)(\xi )\\
=&-ie^{i|\xi|s}\int_{\R}\mathcal{F}\big((e^{-i|D|s}u)^2\big)(\eta)\mathcal{F}\big(\cj{e^{-i|D|s}u}\big)(\xi-\eta )d\eta\\
=&-ie^{i|\xi|s}\int_{\R}\int_{\R}\mathcal{F}\big(e^{-i|D|s}u\big)(\eta-\zeta)
\mathcal{F}\big(e^{-i|D|s}u\big)(\zeta)\mathcal{F}\big(\cj{e^{-i|D|s}u})\big)(\xi-\eta )d\zeta d\eta\\
=&-i\int_{\R}\int_{\R}e^{is(|\xi|-|\zeta|+|\eta-\xi|-|\eta-\zeta|)}
\hat{u}(\eta-\zeta)\hat{u}(\zeta)\cj{\hat{u}}(\eta-\xi )d\zeta d\eta.
\end{align*}

\noi
Setting $\phi (\xi,\eta,\zeta):=|\xi|-|\zeta|+|\eta-\xi|-|\eta-\zeta|$, we can write
\begin{equation}\label{decomp f}
f(u,s)=f_{\textup{res}}(u)+f_{\textup{osc}}(u,s),
\end{equation}

\noi
where
\begin{align}\label{f_res, f_osc}
f_{\textup{res}}(u)&=-i\mathcal{F}^{-1}\int\int_{\{\phi=0\}}
\hat{u}(\eta-\zeta)\hat{u}(\zeta)\cj{\hat{u}}(\eta-\xi )d\zeta d\eta,\\
f_{\textup{osc}}(u,s)&=-i\mathcal{F}^{-1}\int\int_{\{\phi\neq 0\}}
e^{is(|\xi|-|\zeta|+|\eta-\xi|-|\eta-\zeta|)}\hat{u}(\eta-\zeta)\hat{u}(\zeta)\cj{\hat{u}}(\eta-\xi )d\zeta d\eta.\notag
\end{align}

\noi
As it will be proved in Lemma \ref{phase 0 R} in the next section, for fixed $\xi$, the set
$\{\phi(\xi,\eta,\zeta)=0\}\subset\R^2$
has non-zero Lebesgue measure, and thus it makes sense to integrate on this set. More precisely,
$\{\phi(\xi,\eta,\zeta)=0\}$ is the set of $(\eta,\zeta)\in\R^2$ such that $\zeta, \eta-\xi, \eta-\zeta$ have the same sign as $\xi$,
or $\zeta=\xi$, or $\eta-\zeta=\xi$.

Plugging the decomposition \eqref{decomp f} into the equation \eqref{eqn u 1}, we obtain
\begin{equation*}
u(t)=u_0+\tilde{\eps} tf_{\textup{res}}(u_0)+\tilde{\eps}\int_0^tf_{\textup{osc}}(u_0,s)ds+O(\tilde{\eps}^2).
\end{equation*}

\noi
We notice that the resonant part of the non-linearity,
which is constant in time, causes the appearance of the secular term $\tilde{\eps} tf_{\textup{res}}(u_0)$.
This term will grow with time and will cause the approximation to break down as time approaches
$\frac{1}{\tilde{\eps}}$. The purpose of the renormalization group method consists in re-normalizing the secular term.
By doing that, its main contribution is taken into account in such a way that the approximation of $u$
stays valid at least up to a time of order $\frac{1}{\tilde{\eps}}$.
The idea behind the renormalization group method is to regard
the term $u_0+\tilde{\eps} tf_{\textup{res}}(u_0)$ as being the Taylor
expansion of order one of a function $W(t)$ around $t=0$.
Then, one introduces the renormalization group equation:
\begin{equation}\label{eqn W}
\begin{cases}
\partial_tW=\tilde{\eps} f_{\textup{res}}(W)\\
W(0)=u_0
\end{cases}
\end{equation}

\noi
An approximation of order $O(\tilde{\eps})$ of $u(t)$ is then
\begin{equation*}
u(t)=W(t)+\tilde{\eps} F_{\textup{osc}}(W(t),t),
\end{equation*}

\noi
where we set $F_{\textup{osc}}(h,t):=\int_0^tf_{\textup{osc}}(h,s)ds$ for all $h\in H^{\frac{1}{2}}_+$.

\subsection{Approximate solution for the \eqref{eq:dirac} equation on $\R$}

In this section we construct an approximate solution
based on the solution of the RG equation.
We first determine the resonant part of the
non-linearity $f_{\textup{res}}$. For that purpose we fix $\xi\in\R$,
and determine the area in the $(\zeta,\eta)$-plane
in which $\phi(\xi,\eta,\zeta)$ vanishes.

Let us first make the following notations:
\begin{align*}
\xi_1=\xi, \,\,\,\,\, \xi_2=\zeta,\,\,\,\,\,\xi_3=\eta-\xi,\,\,\,\,\,\xi_4=\eta-\zeta.
\end{align*}

\noi
Notice that $\xi_1-\xi_2+\xi_3-\xi_4=0$. Then, $\phi(\xi,\eta,\zeta)=0$
is equivalent to \\ $|\xi_1|-|\xi_2|+|\xi_3|-|\xi_4|=0$. We have the following
lemma, whose proof follows its analogue in the case of $\T$ \cite{PGSG res}.

\begin{lemma}\label{phase 0 R}
The set of $(\xi_1,\xi_2,\xi_3,\xi_4)\in\R^4$ such that $\xi_1-\xi_2+\xi_3-\xi_4=0$ and
 $|\xi_1|-|\xi_2|+|\xi_3|-|\xi_4|=0$,  is
\begin{align*}
M:=&\{(\xi_1,\xi_2,\xi_3,\xi_4)\in\R^4, \xi_1\neq \xi_2,  \xi_1\neq \xi_4\Big| \xi_1,\xi_2,\xi_3,\xi_4\geq 0\}\\
&\cup \{(\xi_1,\xi_2,\xi_3,\xi_4)\in\R^4, \xi_1\neq \xi_2,  \xi_1\neq \xi_4\Big| \xi_1,\xi_2,\xi_3,\xi_4\leq 0\}\\
&\cup \{(\xi_1,\xi_1,\xi_3,\xi_3)\in\R^4\}\cup \{(\xi_1,\xi_2,\xi_2,\xi_1)\in\R^4\}.
\end{align*}
\end{lemma}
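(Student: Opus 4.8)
The plan is to classify all real quadruples $(\xi_1,\xi_2,\xi_3,\xi_4)$ satisfying simultaneously the linear constraint $\xi_1-\xi_2+\xi_3-\xi_4=0$ and the ``absolute-value'' constraint $|\xi_1|-|\xi_2|+|\xi_3|-|\xi_4|=0$ by a case analysis on the signs of the four variables. Subtracting the two equations gives $(|\xi_1|-\xi_1)-(|\xi_2|-\xi_2)+(|\xi_3|-\xi_3)-(|\xi_4|-\xi_4)=0$, and adding them gives $(|\xi_1|+\xi_1)-(|\xi_2|+\xi_2)+(|\xi_3|+\xi_3)-(|\xi_4|+\xi_4)=0$. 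Writing $\xi_j^+=\max(\xi_j,0)$ and $\xi_j^-=\max(-\xi_j,0)$, so that $|\xi_j|+\xi_j=2\xi_j^+$ and $|\xi_j|-\xi_j=2\xi_j^-$, these two relations become
\begin{equation*}
\xi_1^+ -\xi_2^+ +\xi_3^+ -\xi_4^+ =0,\qquad \xi_1^- -\xi_2^- +\xi_3^- -\xi_4^- =0,
\end{equation*}
i.e. $\xi_1^++\xi_3^+=\xi_2^++\xi_4^+$ and $\xi_1^-+\xi_3^-=\xi_2^-+\xi_4^-$. Since all terms here are nonnegative, the system decouples neatly into a ``positive-part'' balance and a ``negative-part'' balance.

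Next I would exploit nonnegativity to force vanishing. First suppose not all $\xi_j$ have the same sign; say there is at least one strictly positive and at least one strictly negative entry (the case of a zero entry is handled within the sign bookkeeping). The key observation is that the original linear relation rewritten as $\xi_1+\xi_3=\xi_2+\xi_4$ together with $\xi_1^++\xi_3^+=\xi_2^++\xi_4^+$ forces $\xi_1^-+\xi_3^-=\xi_2^-+\xi_4^-$ automatically (these are the same equation), so the real content is: among $\{\xi_1,\xi_3\}$ (the ``$+$ positions'') and $\{\xi_2,\xi_4\}$ (the ``$-$ positions'') the sum of positive parts match and the sum of negative parts match. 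A short argument then shows that whenever the two sides are not all of one sign, the only way to balance positive parts and negative parts separately, given a fixed total, is that the multiset $\{\xi_1,\xi_3\}$ equals the multiset $\{\xi_2,\xi_4\}$: indeed if, say, $\xi_1\ge 0$ and $\xi_3\le 0$ while $\xi_2,\xi_4$ are arranged in some way, matching $\xi_1=\xi_2^++\xi_4^+$ and $|\xi_3|=\xi_2^-+\xi_4^-$ and $\xi_2+\xi_4=\xi_1+\xi_3$ pins down $\{\xi_2,\xi_4\}=\{\xi_1,\xi_3\}$. This yields exactly the two ``degenerate'' components $\{(\xi_1,\xi_1,\xi_3,\xi_3)\}$ (when $\xi_2=\xi_1$, $\xi_4=\xi_3$) and $\{(\xi_1,\xi_2,\xi_2,\xi_1)\}$ (when $\xi_2=\xi_3$, $\xi_4=\xi_1$), after relabeling the pairing. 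Conversely, when all four $\xi_j\ge 0$ or all four $\le 0$, the absolute-value equation becomes literally the linear equation, so it is automatically satisfied; the only points to exclude are those already lying in the degenerate components, which is precisely the content of the conditions $\xi_1\ne\xi_2$ and $\xi_1\ne\xi_4$ attached to the first two pieces of $M$. I would double-check that every quadruple in the claimed set $M$ does satisfy both equations (trivial for the degenerate pieces; for the single-signed pieces, immediate) and that nothing outside $M$ does (the sign analysis above).

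The main obstacle is purely organizational: the case analysis on the $2^4$ sign patterns of $(\xi_1,\xi_2,\xi_3,\xi_4)$, handled efficiently by the $\xi^\pm$ decomposition so that it collapses to the two balance equations, plus the subtlety that the conditions $\xi_1\neq\xi_2$, $\xi_1\neq\xi_4$ are needed only to make the decomposition of $M$ into its stated pieces nonredundant (they carve out the overlap with the two ``thin'' linear subspaces $\{\xi_2=\xi_1,\xi_4=\xi_3\}$ and $\{\xi_2=\xi_3,\xi_4=\xi_1\}$, which are then added back as the last two components). Since the lemma statement says the proof follows its analogue in \cite{PGSG res}, I would simply mirror that argument, with the one adaptation that on $\R$ the variables range continuously rather than over $\Z$, which changes nothing in the sign analysis but does mean (as noted after the statement) that the set $M$ has positive Lebesgue measure in the hyperplane $\{\xi_1-\xi_2+\xi_3-\xi_4=0\}$ — the two single-signed orthants contribute the bulk, and the last two pieces are lower-dimensional and hence negligible, which is why only they are written with ``$=$'' constraints.
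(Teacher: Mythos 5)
Your argument is correct: adding and subtracting the two equations gives the decoupled balances $\xi_1^++\xi_3^+=\xi_2^++\xi_4^+$ and $\xi_1^-+\xi_3^-=\xi_2^-+\xi_4^-$, and the sign case analysis (all four of one sign, or else the pairings $\xi_2=\xi_1,\,\xi_4=\xi_3$ or $\xi_2=\xi_3,\,\xi_4=\xi_1$, with the conditions $\xi_1\neq\xi_2$, $\xi_1\neq\xi_4$ serving only to make the decomposition nonredundant) yields exactly the set $M$. The paper gives no proof of its own, deferring to the analogue on $\T$ in \cite{PGSG res}, and your positive/negative-part decomposition is essentially that same elementary sign argument, so nothing further is needed.
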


Coming back to the notations in $\xi$, $\eta$, and $\zeta$, we have that $\phi(\xi,\eta,\zeta)=0$ in the following cases:

If $\xi>0$ and $(\eta,\zeta)\in\{(\eta,\zeta)\in\R^2| \eta\geq \xi, \eta\geq \zeta\geq 0\}\cup\{\zeta=\xi\}\cup\{\eta-\zeta=\xi\}$

If $\xi<0$ and $(\eta,\zeta)\in\{(\eta,\zeta)\in\R^2| \eta\leq \xi, \eta\leq \zeta\leq 0\}\cup\{\zeta=\xi\}\cup\{\eta-\zeta=\xi\}$
\medskip

\noi
Since, for fixed $\xi\in\R$, the sets $\{\zeta=\xi\}$ and
 $\{\eta-\zeta=\xi\}$ are of measure zero in the $(\zeta,\eta)$-plane,
they do not interfere in the integration in equation \eqref{f_res, f_osc},
 and thus we can neglect them. We are therefore left with the following
two terms of $\mathcal{F}\big(f_{\textup{res}}(u)\big)$:

\noi
1. The case $\xi>0$, $\zeta\geq 0$, $\eta-\zeta\geq 0$, $\eta-\xi\geq 0$:
\begin{align*}
&-i\pmb{1}_{\xi> 0}\int\int\hat{u}(\eta-\zeta)\hat{u}(\zeta)\cj{\hat{u}}(\eta-\xi )
\pmb{1}_{\zeta\geq 0}\pmb{1}_{\eta-\zeta\geq 0}\pmb{1}_{\eta-\xi\geq 0}d\zeta d\eta\\
&=-i\pmb{1}_{\xi> 0}\int\int\hat{u}_+(\eta-\zeta)\hat{u}_+(\zeta)\cj{\hat{u}}_+(\eta-\xi )d\zeta d\eta
=-i\mathcal{F}\big(\Pi_+ (|u_+|^2u_+)\big)(\xi)\pmb{1}_{\xi> 0}.\\
\end{align*}

\noi
2. The case $\xi<0$, $\zeta<0$, $\eta-\zeta<0$, $\eta-\xi<0$:
\begin{align*}
&-i\pmb{1}_{\xi< 0}\int\int\hat{u}(\eta-\zeta)\hat{u}(\zeta)\cj{\hat{u}}(\eta-\xi )\pmb{1}_{\zeta< 0}
\pmb{1}_{\eta-\zeta< 0}\pmb{1}_{\eta-\xi< 0}d\zeta d\eta\\
&=-\pmb{1}_{\xi< 0}i\int\int\hat{u}_-(\eta-\zeta)\hat{u}_-(\zeta)\cj{\hat{u}}_-(\eta-\xi )d\zeta d\eta
=-i\mathcal{F}\big(\Pi_- (|u_-|^2u_-)\big)(\xi)\pmb{1}_{\xi< 0}.\\
\end{align*}

\noi
Thus, the resonant part of the nonlinearity is
\begin{align}\label{fres R}
f_{\textup{res}}(u)=-i\Big(\Pi_+(|u_+|^2u_+)+\Pi_-(|u_-|^2u_-)\Big)
\end{align}

\noi
Let $W_0\in H^s_+(\R)$, $s>1/2$.
We consider the renormalization group equation:
\begin{align}\label{eq:RG}
\begin{cases}
\partial_tW=\eps^2f_{\textup{res}}(W)\\
W(0)=W_0
\end{cases}
\end{align}

\noi
Projecting onto non-negative and negative frequencies, we obtain two equations,
 one for $W_+:=\Pi_+(W)$ and one for $W_-:=\Pi_-(W)$. Notice first that, since $W_0\in H^s_+(\R)$,
 we have that $W_{0,-}=0$ and $W_{0,+}=W_0$. Then, the equations we obtain are:
\begin{align*}
\begin{cases}
i\partial_tW_+=\eps ^2\Pi_+ (|W_+|^2W_+) \\
W_+(0)=W_0
\end{cases}
\end{align*}

\noi
and
\begin{align*}
\begin{cases}
i\partial _t W_-=\eps ^2\Pi_-(|W_-|^2W_-)\\
W_-(0)=0.
\end{cases}
\end{align*}

\noi
By the Cauchy-Lipschitz theorem, we have that
 $W_-(t)=0$ for all $t\in\R$, and thus $W=W_+$.
We construct an approximate solution by
\begin{align}\label{def uapp}
u_{\textup{app}}(t)=W(t)+\eps^2F_{\textup{osc}}(W(t),t).
\end{align}

\noi
Then, $u_{\textup{app}}$ satisfies the equation
\begin{align}\label{estimate u_app}
\begin{cases}
\partial_tu_{\textup{app}}=\eps^2f(W(t),t)+\eps^4D_{W}F_{\textup{osc}}(W(t),t)\cdot f_{\textup{res}}(W(t))\\
u_{\textup{app}}=W_0.
\end{cases}
\end{align}

\noi
By the Duhamel formula, we obtain that
\begin{align}\label{Duhamel u_app}
u_{\textup{app}}(t)=W_0+\eps^2\int_0^t f(u_{\textup{app}}(s))ds
+\int_0^t R_{\eps}(s,W(s))ds,
\end{align}

\noi
where
\[R_{\eps}(W(t),t)=\eps^2\Big(f(W(t))-f(u_{\textup{app}}(t))\Big)+\eps^4D_{W}
F_{\textup{osc}}(W(t),t)\cdot f_{\textup{res}}(W(t)).\]

\subsection{Estimates for the oscillatory part of the nonlinearity in the case of $\R$}

\begin{lemma}\label{Est 1 R}
Let $s\geq 1$. Let $W\in C(\R,H^s_+(\R))$ be such that $\W=\eps W$ is the solution of
the Szeg\"o equation \eqref{eqn mathcal W} with initial data $\W_0=\eps W_0$. Then, we have that
\begin{align*}
&\|F_{\textup{osc}}(W,t)\|_{H^s}\leq C_{\ast}t^{1/2}+C\|W\|_{H^s}^3,\\
&\|D_{W}F_{\textup{osc}}(W(t),t)\cdot f_{\textup{res}}(W(t))\|_{H^s}\leq C_{\ast}t^{1/2}+C\|W\|^5_{H^s},
\end{align*}

\noi
where $C>0$ is an absolute constant and $C_{\ast}>0$ is a constant depending only
on the $H^{1/2}_+(\R)$-norm of $W_0$.
\end{lemma}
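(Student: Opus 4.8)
The plan is to estimate $F_{\textup{osc}}(W,t)=\int_0^t f_{\textup{osc}}(W,s)\,ds$ directly from the Fourier representation in \eqref{f_res, f_osc}. First I would split the time integral: write $\int_0^t = \int_0^{T_0} + \int_{T_0}^t$ for a fixed cutoff, say $T_0=1$. On the short-time piece $[0,1]$ we simply bound $\|f_{\textup{osc}}(W,s)\|_{H^s}\le \|f(W,s)\|_{H^s} + \|f_{\textup{res}}(W)\|_{H^s}$; since $e^{-i|D|s}$ is an isometry on $H^s$ and $H^s(\R)$ with $s>1/2$ is an algebra, both terms are $\lesssim \|W\|_{H^s}^3$, giving the $C\|W\|_{H^s}^3$ contribution after integrating over the bounded interval. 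The genuine work is the long-time piece $\int_1^t f_{\textup{osc}}(W,s)\,ds$, where we must exploit the oscillation $e^{is\phi(\xi,\eta,\zeta)}$ with $\phi\ne 0$ on the domain of integration.

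For the long-time piece I would integrate by parts in $s$: $\int_1^t e^{is\phi}\,ds = \frac{e^{it\phi}-e^{i\phi}}{i\phi}$, which transfers the oscillation into a factor $\frac{1}{\phi}$ that is singular precisely on the resonant set $M$ of Lemma \ref{phase 0 R}. The key analytic input is a lower bound $|\phi(\xi,\eta,\zeta)|\gtrsim (\text{something})$ away from $M$, or more precisely a control of the measure of the near-resonant set $\{0<|\phi|<\gamma\}$; combined with Cauchy–Schwarz in $(\eta,\zeta)$ this should yield $\big\|\int_1^t f_{\textup{osc}}(W,s)\,ds\big\|_{H^s}\lesssim \|W\|_{H^{1/2}_+}^2\|W\|_{H^s}$ up to a loss. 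This is where the low-frequency difficulty of $\R$ mentioned in the introduction appears: near $\xi=0$ the phase $\phi$ degenerates, so the gain from $\frac{1}{\phi}$ is not uniform, and one cannot close the estimate with $\|W\|_{H^s}^3$ alone. The device to absorb this is to allow a factor $t^{1/2}$: on the region of frequency space where $|\phi|$ is small (of size $\lesssim 1/t$) one does \emph{not} integrate by parts but estimates $\int_1^t e^{is\phi}\,ds$ crudely by, say, $\min(t,1/|\phi|)$, and the $t^{1/2}$ comes from a Cauchy–Schwarz balancing of the measure of this small set (which is $O(1/t)$ in the relevant variable) against the $t$-sized trivial bound — this is exactly the mechanism producing the $C_\ast t^{1/2}$ term, with $C_\ast$ depending only on $\|W_0\|_{H^{1/2}_+}$ because on the bad low-frequency region one uses only the $\dot H^{1/2}$-conservation of the Szeg\"o flow (here is where the hypothesis that $\eps W$ solves \eqref{eqn mathcal W} enters, via $\|W(s)\|_{H^{1/2}_+}\le C\|W_0\|_{H^{1/2}_+}$ for all $s$).

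For the second estimate, $\|D_W F_{\textup{osc}}(W(t),t)\cdot f_{\textup{res}}(W(t))\|_{H^s}$, I would differentiate the Fourier expression for $F_{\textup{osc}}$ with respect to $W$: since $F_{\textup{osc}}$ is trilinear in $(W,W,\bar W)$, its Gâteaux derivative applied to $h=f_{\textup{res}}(W)$ is a sum of three terms, each again a trilinear-type expression with one of the three slots replaced by $f_{\textup{res}}(W)=-i\Pi_+(|W_+|^2W_+)$ (recall $W=W_+$, by \eqref{fres R} and the Cauchy–Lipschitz argument). Using the algebra property, $\|f_{\textup{res}}(W)\|_{H^s}\lesssim \|W\|_{H^s}^3$, so running the same integration-by-parts/near-resonance dichotomy as above with one factor now carrying an extra $\|W\|_{H^s}^2$ produces the bound $C_\ast t^{1/2}+C\|W\|_{H^s}^5$; on the low-frequency piece one again uses only $H^{1/2}$-norms of the ``main'' factors and absorbs the higher derivatives into the single $\|W\|_{H^s}$ that is genuinely needed, so the $t^{1/2}$ coefficient $C_\ast$ still depends on $\|W_0\|_{H^{1/2}_+}$ alone. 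The main obstacle, as flagged, is making the near-resonance measure estimate near $\xi=0$ precise enough that the two regimes (integrate by parts vs.\ trivial bound) glue to give exactly a $t^{1/2}$ loss rather than $t$ or $t^{1-}$; this requires a careful case analysis of the sign conditions defining $M$ and a quantitative version of Lemma \ref{phase 0 R}.
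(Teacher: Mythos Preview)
You have missed the key simplification that makes this lemma almost immediate. The hypothesis is that $W\in C(\R,H^s_+(\R))$, i.e.\ $\widehat{W}$ is supported on $[0,\infty)$. Consequently, in the expression for $f_{\textup{osc}}(W,s)$ the integrand forces $\zeta\ge 0$, $\eta-\zeta\ge 0$, $\eta-\xi\ge 0$; combined with $\phi\ne 0$ and Lemma~\ref{phase 0 R}, this forces $\xi<0$. On that region the phase is \emph{constant in the inner variables}: $\phi(\xi,\eta,\zeta)=-\xi-\zeta+(\eta-\xi)-(\eta-\zeta)=-2\xi$. The inner $(\eta,\zeta)$-integral then just reconstructs $\mathcal F(|W|^2W)(\xi)$, and the time integral $\int_0^t e^{-2i\xi s}\,ds$ is explicit. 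One obtains the closed formula
\[
\widehat{F_{\textup{osc}}}(W,t,\xi)=\frac{e^{-2it\xi}-1}{2\xi}\,\mathcal F(|W|^2W)(\xi)\,\pmb{1}_{\xi<0},
\]
from which $\|F_{\textup{osc}}\|_{L^2}^2\le \|\mathcal F(|W|^2W)\|_{L^\infty}^2\int_{-\infty}^0\frac{\sin^2(t\xi)}{\xi^2}d\xi\le C_\ast t$ (using $\|W\|_{L^3}\lesssim\|W\|_{H^{1/2}_+}=\|W_0\|_{H^{1/2}_+}$), while for $s\ge 1$ the weight $|\xi|^{2s}/\xi^2=|\xi|^{2(s-1)}$ gives $\|F_{\textup{osc}}\|_{\dot H^s}\le\||W|^2W\|_{H^{s-1}}\le C\|W\|_{H^s}^3$. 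The derivative estimate is identical after replacing one factor of $W$ by $f_{\textup{res}}(W)$.

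Your outline instead treats $\phi$ as a genuinely three-variable phase and proposes a near-resonance dichotomy $|\phi|\lessgtr 1/t$ together with a measure estimate for $\{0<|\phi|<\gamma\}$. None of this is needed here: the ``near-resonant'' set is simply $\{-1/t<\xi<0\}$, and the $t^{1/2}$ arises from the elementary scaling $\int_0^\infty\frac{\sin^2(t\xi)}{\xi^2}d\xi=ct$, not from any balancing argument. Your proposed route might be made to work, but the part you flag as the ``main obstacle'' (a quantitative version of Lemma~\ref{phase 0 R}) is precisely what the positive-frequency hypothesis on $W$ renders unnecessary; as written, that step is not carried out, and without it your sketch does not yet prove the lemma.
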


\begin{proof}
Since $W\in L^2_+(\R)$ and using Lemma \ref{phase 0 R}, we have that
\begin{align*}
\widehat{f_{\textup{osc}}}&(W(t),s,\xi )\\
&=-i\int\int_{\phi\neq 0}e^{is\phi (\xi,\eta,\zeta)}
\hat{W}(t,\eta-\zeta )\hat{W}(t,\zeta )\cj{\hat{W}}(t,\eta-\xi )\pmb{1}_{\eta-\zeta\geq 0}
\pmb{1}_{\zeta\geq 0}\pmb{1}_{\eta-\xi\geq 0}d\eta d\zeta \\
&=-i\pmb{1}_{\xi<0}\int\int e^{is\phi(\xi,\eta,\zeta )}\hat{W}(t,\eta-\zeta )\hat{W}(t,\zeta )
\cj{\hat{W}}(t,\eta-\xi)\pmb{1}_{\eta\geq \zeta}
\pmb{1}_{\zeta\geq 0}d\eta d\zeta.
\end{align*}

\noi
Then,
\begin{align*}
\widehat{F_{\textup{osc}}}(W(t),&t,\xi )=\int_0^t \widehat{f_{\textup{osc}}}(W(t),s,\xi )ds\\
&=-i\pmb{1}_{\xi<0}\int\int \frac{e^{it\phi(\xi,\eta,\zeta )}-1}{i\phi}\hat{W}(t,\eta-\zeta )
\hat{W}(t,\zeta )\cj{\hat{W}}(t,\eta-\xi)\pmb{1}_{\eta\geq \zeta}
\pmb{1}_{\zeta\geq 0}d\eta d\zeta.
\end{align*}

\noi
Notice that in the region $\xi<0$ and $\{(\eta,\zeta)\in\R^{2}\big|\eta\geq \zeta\geq 0\}$, we have that
\[\phi(\xi,\eta,\zeta)=|\xi|-|\zeta|+|\eta-\xi|-|\eta-\zeta|=-\xi-\zeta+\eta-\xi-\eta+\zeta=-2\xi.\]

\noi
Then,
\begin{align}\label{eqn:Fosc}
\widehat{F_{\textup{osc}}}(W(t),t,\xi )
=\frac{e^{-2it\xi}-1}{2\xi}\mathcal{F}(|W|^2W)(\xi)\pmb{1}_{\xi<0}.
\end{align}

\noi
We now compute the $L^2$-norm of $F_{\textup{osc}}(W(t),t)$, using Parseval's identity:
\begin{align*}
2\pi\|F_{\textup{osc}}(W(t),t)\|_{L^2(\R)}^2&=\|\widehat{F_{\textup{osc}}}(W(t),t)\|_{L^2(\R)}^2
=\int_{-\infty}^0\frac{\sin^2(t\xi)}{\xi^2}\big|\mathcal{F}(|W|^2W)(\xi)\big|^2 d\xi\\
&\leq\big\|\mathcal{F}(|W|^2W)\big\|^2_{L^{\infty}(\R)}\int_{-\infty}^0\frac{\sin^2(t\xi)}{\xi^2}d\xi\\
&\leq\||W|^2W\|^2_{L^{1}(\R)}t\int_{0}^{\infty}\frac{\sin^2x}{x^2}dx\\
&\leq Ct\|W\|_{L^3(\R)}^6\leq Ct\|W(t)\|_{H^{1/2}_+(\R)}^6\leq Ct\|W_0\|_{H^{1/2}_+(\R)}^6.
\end{align*}

\noi
The last inequality is due to the conservation of the $H^{1/2}_+$-norm
by the flow of the Szeg\"o equation.
Therefore,
\begin{align*}
\|F_{\textup{osc}}(W(t),t)\|_{L^2(\R)}\leq C_{\ast}t^{1/2} \text{ for all } t\in\R.
\end{align*}

\noi
Let us now estimate the $\dot{H}^s$-norm of $F_{\textup{osc}}(W(t),t)$ for $s\geq 1$.
\begin{align*}
\|F_{\textup{osc}}(W(t),t)\|_{\dot{H}^s(\R)}^2&=
 \int_{-\infty}^0\xi^{2s}\frac{\sin^2(t\xi)}{\xi^2}|\mathcal{F}(|W|^2W)(\xi)\big|^2d\xi\\
&\leq\int_{-\infty}^0\xi^{2(s-1)}|\mathcal{F}(|W|^2W)(\xi)\big|^2d\xi\\
&\leq \||W|^2W\|^2_{\dot{H}^{s-1}(\R)}\leq \||W|^2W\|^2_{H^{s}(\R)}\leq \|W\|^6_{H^{s}(\R)}.
\end{align*}

\noi
Therefore,
\begin{align*}
\|F_{\textup{osc}}(W(t),t)\|_{H^s(\R)}^2\leq C_{\ast}t^{1/2}+C\|W\|^3_{H^{s}(\R)}.
\end{align*}

\noi
We proceed similarly for $D_{W}F_{\textup{osc}}(W(t),t)\cdot f_{\textup{res}}(W)$.
First, we notice that
\begin{align*}
\mathcal{F}\big(D_{W}F_{\textup{osc}}(W,t)\cdot f_{\textup{res}}(W)\big)(\xi)
=&2\frac{e^{-2it\xi}-1}{2\xi}\mathcal{F}(|W|^2f_{\textup{res}}(W))(\xi)\pmb{1}_{\xi<0}\\
&+\frac{e^{-2it\xi}-1}{2\xi}\mathcal{F}(W^2\cj{f_{\textup{res}}(W)})(\xi)\pmb{1}_{\xi<0}.
\end{align*}

\noi
We use in what follows the fact that $f_{\textup{res}}(W)=\Pi_+(|W|^2W)$, which is a
consequence of equation \eqref{fres R} and of $W\in L^2_+(\R)$.
We estimate the $L^2$-norm, using Parseval's identity:
\begin{align*}
2\pi\|D_{W}F_{\textup{osc}}&(W,t)\cdot f_{\textup{res}}(W)\|_{L^2(\R)}^2
=\|\mathcal{F}\big( D_{W}F_{\textup{osc}}(W,t)\cdot f_{\textup{res}}(W)\big)(\xi)\|_{L^2(\R)}^2\\
&\leq C\int_{-\infty}^0\frac{\sin^2(t\xi)}{\xi^{2}}\big|\mathcal{F}(|W|^2f_{\textup{res}}(W)(\xi)\big|^2 d\xi\\
&+C\int_{-\infty}^0\frac{\sin^2(t\xi)}{\xi^{2}}\big|\mathcal{F}(W^2\cj{f_{\textup{res}}(W)}(\xi)\big|^2 d\xi\\
&\leq C\Big(\big\|\mathcal{F}(|W|^2f_{\textup{res}}(W)\big\|^2_{L^{\infty}(\R)}
+\big\|\mathcal{F}(W^2\cj{f_{\textup{res}}(W)})\big\|^2_{L^{\infty}(\R)}\Big)\int_{-\infty}^0\frac{\sin^2(t\xi)}{\xi^{2}}d\xi\\
&\leq C\big(\||W|^2f_{\textup{res}}(W)\|^2_{L^{1}(\R)}+\|W^2\cj{f_{\textup{res}}(W)}\|^2_{L^{1}(\R)}\big)t\int_{0}^{\infty}\frac{\sin^2x}{x^{2}}dx\\
&\leq Ct\|W\|_{L^4(\R)}^4\|f_{\textup{res}}(W)\|_{L^2(\R)}^2 \leq Ct\|W\|_{L^4(\R)}^4\|\Pi_+(|W|^2W)\|_{L^2(\R)}^2 \\
&\leq Ct\|W\|_{L^4(\R)}^4\|W\|_{L^6(\R)}^6\leq Ct\|W(t)\|_{H^{1/2}_+(\R)}^{10}\leq C\|W_0\|_{H^{1/2}_+(\R)}^{10}t\leq C_{\ast}t.
\end{align*}

\noi
Then, proceeding as in the case of $F_{\textup{osc}}(W)$ and using the structure of an algebra of $H^{s}$,
$s\geq 1$, we have that
\begin{align*}
\|D_{W}F_{\textup{osc}}(W,t)\cdot f_{\textup{res}}(W)\|_{\dot{H}^s(\R)}^2&
\leq C\||W|^2f_{\textup{res}}(W)\|^2_{\dot{H}^{s-1}(\R)}+C\|W^2\cj{f_{\textup{res}}(W)}\|^2_{\dot{H}^{s-1}(\R)}\\
& \leq C\|W\|_{H^s(\R)}^4\|f_{\textup{res}}(W)\|_{H^s(\R)}^2\\
&\leq C\|W\|_{H^s(\R)}^4\||W|^2W\|_{H^s(\R)}^2\leq \|W\|_{H^s(\R)}^{10}.
\end{align*}

\noi
Therefore, for $s\geq 1$ we have
\begin{align*}
\|D_{W}F_{\textup{osc}}(W,t)\cdot f_{\textup{res}}(W)\|_{H^s(\R)}&
\leq C_{\ast}t^{1/2}+C\|W\|_{H^s(\R)}^{5}.
\end{align*}
\end{proof}

\subsection{Proof of Theorem \ref{Main theorem}}

\begin{proof}[Proof of Theorem \ref{Main theorem}]
Let $v$ be the solution of equation \eqref{NLW main thm}.
With the change of variables $u(t)=\frac{1}{\eps}e^{i|D|t}v(t)$, we have that
 $u$ satisfies the equation \eqref{eqn u}. By the Duhamel formula, it follows that
\begin{equation}\label{Duhamel u}
u(t)=W_0+\eps^2\int_0^tf(u(s),s)ds,
\end{equation}

\noi
Set $w(t):=u(t)-u_{\textup{app}}(t)$, where $u_{\textup{app}}$
is defined by \eqref{def uapp}. By equations \eqref{Duhamel u} and \eqref{Duhamel u_app}, we have that
\begin{align*}
w(t)=&\eps^2\int_0^t\big(f(u(s),s)-f(u_{\textup{app}}(s),s)\big)ds
-\int_0^t R_{\eps}(W(s),s)ds\\
=&\eps^2\int_0^t\big(f(u(s),s)-f(u_{\textup{app}}(s),s)\big)ds
-\eps^2\int_0^t\Big(f(W(s),s)-f(u_{\textup{app}}(s),s)\Big)ds\\
&-\eps^4 \int_0^tD_{W}F_{\textup{osc}}(W(s),s)\cdot f_{\textup{res}}(W(s))ds=\mathrm{I+II+III}.
\end{align*}

\noi
Here $W$ denotes the solution of the renormalization group equation \eqref{eq:W}.
In what follows, we estimate each of the terms I,II,III in the $H^s$-norm, $s> 1/2$.
Using the definition of $u_{\textup{app}}$ \eqref{def uapp},
 and the estimates in Lemma \ref{Est 1 R}, it follows that
\begin{align*}
\|u_{\textup{app}}(t)\|_{H^s}\leq \|W\|_{H^s}+\eps^2\|F_{\textup{osc}(W,t)}\|_{H^s}\leq \|W\|_{H^s}+\eps^2C_{\ast}t^{1/2}+\eps^2C\|W\|_{H^s}^3.
\end{align*}

\noi
Then, we have
\begin{align*}
\|\mathrm{I}\|_{H^s}&\leq \eps^2\int_0^t\|w(\tau)\|_{H^s}\big(\|u(\tau)\|_{H^s}^2
+\|u_{\textup{app}}(\tau)\|_{H^s}^2\big)d\tau\\
&\leq C \eps^2\int_0^t\|w(\tau)\|_{H^s}\big(\|w(\tau)\|_{H^s}^2
+\|u_{\textup{app}}(\tau)\|_{H^s}^2\big)d\tau\\
&\leq C \eps^2\int_0^t\|w(\tau)\|_{H^s}\big(\|w(\tau)\|^2_{H^s}+\|W\|_{H^s}^2+\eps^4C_{\ast}t+\eps^4C\|W\|_{H^s}^6\big)d\tau.
\end{align*}

\noi
Using $W(s)-u_{\textup{app}}(s)=-\eps^2F_{\textup{osc}}(W(s),s)$,
and proceeding as above, we obtain
\begin{align*}
\|\mathrm{II}\|_{H^s}&\leq  \eps^4t\|F_{\textup{osc}}(t,W(t))\|_{L^{\infty}([0,t],H^s)}
\big(\|W\|_{L^{\infty}([0,t],H^s)}^2
+\|u_{\textup{app}}\|_{L^{\infty}([0,t],H^s)}^2\big)\\
&\leq C_{\ast} \eps^4t(t^{1/2}+\|W\|_{L^{\infty}([0,t],H^s)}^3)(\|W\|_{L^{\infty}([0,t],H^s)}^2+\eps^4t+\eps^4\|W\|_{L^{\infty}([0,t],H^s)}^6).
\end{align*}

\noi
and
\begin{align*}
\|\mathrm{III}\|_{H^s}&\leq C_{\ast}\eps^4t(t^{1/2}+\|W\|_{L^{\infty}([0,t],H^s)}^5).
\end{align*}

\noi
In order to estimate $w$ we will use a bootstrap argument. Let $0\leq\alpha\leq \frac{1}{2}$, $\delta>0$ small enough, and set
\begin{align}\label{def T R}
T:=\sup\Big\{t\geq 0\Big|\|w(t)\|_{H^s}\leq 1\Big\}.
\end{align}

\noi
We will prove that $T> \frac{1}{\eps^2}\Big(\log(\frac{1}{\eps^{\delta}})\Big)^{1-2\alpha}$. Suppose by contradiction that
\begin{equation}\label{assumption T}
T\leq\frac{1}{\eps^2}\Big(\log(\frac{1}{\eps^{\delta}})\Big)^{1-2\alpha}.
\end{equation}

\noi
According to the hypothesis on $\W$ and since $\W=\eps W$,
we have that $\|W(t)\|_{H^s(\R)}\leq C\Big(\log(\frac{1}{\eps^{\delta}})\Big)^{\alpha}$
for all $t\in\R$. Using the estimates of $\mathrm{I,II,III}$, we obtain
for $0\leq t\leq\frac{1}{\eps^2}\Big(\log(\frac{1}{\eps^{\delta}})\Big)^{1-2\alpha}$ that
\begin{align*}
\|w(t)\|_{H^s}\leq &C\eps^2\int_0^t\|w(\tau)\|_{H^s}(1+\|W\|_{H^s}^2+\eps^4C_{\ast}\tau+\eps^4C\|W\|_{H^s}^6)d\tau\\
&+
C_{\ast} \eps^4t(t^{1/2}+\|W\|_{H^s}^3)(\|W\|_{H^s}^2+\eps^4t+\eps^4\|W\|_{H^s}^6)+C_{\ast}\eps^4t(t^{1/2}+\|W\|_{H^s}^5)\\
\leq & C\eps^2\Big(\log(\frac{1}{\eps^{\delta}})\Big)^{2\alpha}\int_0^t\|w(\tau)\|_{H^s}d\tau+
C_{\ast} \eps\Big(\log(\frac{1}{\eps^{\delta}})\Big)^{\frac{3}{2}(1-2\alpha)}\Big(\log(\frac{1}{\eps^{\delta}})\Big)^{2\alpha}\\
&+C_{\ast}\eps\Big(\log(\frac{1}{\eps^{\delta}})\Big)^{\frac{3}{2}(1-2\alpha)}.
\end{align*}

\noi
By Gronwall's inequality it follows, for $0\leq t\leq\frac{1}{\eps^2}\Big(\log(\frac{1}{\eps^{\delta}})\Big)^{1-2\alpha}$, that
\[\|w(t)\|_{H^s}\leq C_{\ast}\eps\Big(\log(\frac{1}{\eps^{\delta}})\Big)^{\frac{3}{2}-\alpha}e^{C\log(\frac{1}{\eps^{\delta}})}
\leq C_{\ast}\eps\Big(\log(\frac{1}{\eps^{\delta}})\Big)^{\frac{3}{2}-\alpha}\frac{1}{\eps^{C\delta}}\leq C_{\ast}\eps^{1-C_0\delta},\]

\noi
If $\delta$ is sufficiently small, this bound is much better
than the one imposed in the definition of $T$. Since $w$ is continuous with respect to $t$,
it follows that there exists $\gamma>0$ such that
\[\|w(t)\|_{H^s}\leq 1,\]

\noi
for $0\leq t\leq \frac{1}{\eps^2}\Big(\log(\frac{1}{\eps^{\delta}})\Big)^{1-2\alpha}+\gamma$. This
contradicts the assumption \eqref{assumption T} we made on $T$. Therefore, $T> \frac{1}{\eps^2}\Big(\log(\frac{1}{\eps^{\delta}})\Big)^{1-2\alpha}$
and, moreover, $\|w(t)\|_{H^s}\leq \eps^{1-C_0\delta}$ for all $0\leq t\leq \frac{1}{\eps^2}\Big(\log(\frac{1}{\eps^{\delta}})\Big)^{1-2\alpha}$.
This yields
\[\|u(t)-W(t)-\eps^2F_{\textup{osc}}(W(t),t)\|_{H^s(\R)}\leq C_{\ast}\eps^{1-C_0\delta}\]

\noi
for all $0\leq t\leq \frac{1}{\eps^2}\Big(\log(\frac{1}{\eps^{\delta}})\Big)^{1-2\alpha}$.
Since by Lemma \ref{Est 1 R}, we have that
\[\|\eps^2F_{\textup{osc}}(W(t),t)\|_{H^s(\R)}\leq \eps^2(C_{\ast}t^{1/2}+C\|W\|_{H^s}^3)
\leq C_{\ast}\eps\Big(\log(\frac{1}{\eps^{\delta}})\Big)^{\frac{1}{2}(1-2\alpha)}
\leq C_{\ast}\eps^{1-C_0\delta}\]

\noi
for $0\leq t\leq\frac{1}{\eps^2}\Big(\log(\frac{1}{\eps^{\delta}})\Big)^{1-2\alpha}$, we obtain
\[\|u(t)-W(t)\|_{H^s(\R)}\leq C_{\ast}\eps^{1-C_0\delta}.\]

\noi
Recalling that $u(t)=\frac{1}{\eps}e^{i|D|t}v(t)$ and $W=\frac{1}{\eps}\mathcal{W}$, we obtain that
\begin{equation}
\|v(t)-e^{-i|D|t}\mathcal{W}(t)\|_{H^s(\R)}\leq C_{\ast}\eps^{2-C_0\delta},
\end{equation}

\noi
for $0\leq t\leq\frac{1}{\eps^2}\Big(\log(\frac{1}{\eps^{\delta}})\Big)^{1-2\alpha}$.
\end{proof}

\subsection{Proof of Corollary \ref{Cor}}

\begin{proof}[Proof of Corollary \ref{Cor}]
Let $W$ be the solution of the equation
\begin{equation*}
\begin{cases}
i\partial_t W=\eps^2\Pi_+(|W|^2W)\\
W(0)=W_0.
\end{cases}
\end{equation*}

\noi
With the change of variables $W(t,x)=y(\eps^2t,x)$,
we have that $y$ satisfies the Szeg\"o equation:
\begin{equation*}
\begin{cases}
i\partial_ty=\Pi_+(|y|^2y)\\
y(0)=W_0.
\end{cases}
\end{equation*}

\noi
Then, according to Proposition \ref{prop Szego},
we have that $\|y(t)\|_{H^s(\R)}\sim t^{2s-1}$,
for all $s>\frac{1}{2}$ and for $t>1$ sufficiently large. Consequently, we have
\[\|W(t)\|_{H^s(\R)}\sim (\eps^2t)^{2s-1}\]

\noi
for $\eps^2t$ sufficiently large.
Suppose $\frac{1}{2\eps^2}\Big(\log(\frac{1}{\eps^{\delta}})\Big)^{\frac{1}{4s-1}}\leq t\leq \frac{1}{\eps^2}\Big(\log(\frac{1}{\eps^{\delta}})\Big)^{\frac{1}{4s-1}}$. Then,
\begin{equation}\label{1 nlw}
\frac{c}{2^{2s-1}}\Big(\log(\frac{1}{\eps^{\delta}})\Big)^{\frac{2s-1}{4s-1}}\leq \|W(t)\|_{H^s(\R)}
\leq C\Big(\log(\frac{1}{\eps^{\delta}})\Big)^{\frac{2s-1}{4s-1}}.
\end{equation}

\noi
Applying Theorem \ref{Main theorem} with $\alpha=\frac{2s-1}{4s-1}\in (0,\frac{1}{2})$, we obtain that
\begin{equation}\label{2 nlw}
\|v(t)-e^{-i|D|t}\eps W(t)\|_{H^s(\R)}\leq C_{\ast}\eps^{2-C_0\delta},
\end{equation}

\noi
for $0 \leq t\leq \frac{1}{\eps^2}\Big(\log(\frac{1}{\eps^{\delta}})\Big)^{\frac{1}{4s-1}}$.
Then, equations \eqref{1 nlw} and \eqref{2 nlw} yield
\begin{align*}
\|v(t)\|_{H^s}&\geq \Big|\|\eps W(t)\|_{H^s}-\|v(t)-e^{-i|D|t}\eps W(t)\|_{H^s}\Big|\\
&\geq \frac{c}{2^{2s-1}}\eps\Big(\log(\frac{1}{\eps^{\delta}})\Big)^{\frac{2s-1}{4s-1}}-C_{\ast}\eps^{2-C_0\delta}
\geq C\eps \Big(\log(\frac{1}{\eps^{\delta}})\Big)^{\frac{2s-1}{4s-1}}.
\end{align*}

\noi
Since $v(0)=\eps W_0$, it follows that,
for $\frac{1}{2\eps^2}\Big(\log(\frac{1}{\eps^{\delta}})\Big)^{\frac{1}{4s-1}}
\leq t\leq \frac{1}{\eps^2}\Big(\log(\frac{1}{\eps^{\delta}})\Big)^{\frac{1}{4s-1}}$,
we have
\begin{align*}
\frac{\|v(t)\|_{H^s}}{\|v(0)\|_{H^s}}\geq C\Big(\log(\frac{1}{\eps^{\delta}})\Big)^{\frac{2s-1}{4s-1}}.
\end{align*}

\end{proof}

\section{First order approximation for the \eqref{eq:dirac} \,\,equation on $\T$}

\subsection{The renormalization group equation for the case of $\T$}
We decompose a $2\pi$-periodic function $a(t)$ in the following way:
\begin{equation}\label{eq:a(t)}
a(t)=a_{\textup{res}}+a_{\textup{osc}}(t),
\end{equation}

\noi
where
\begin{equation}\label{eq:a(t)res}
a_{\textup{res}}=\frac{1}{2\pi}\int_0^{2\pi}a(\tau)d\tau
\end{equation}

\noi
is the mean of the function $a(t)$ or equivalently, the Fourier coefficient at zero. The oscillatory part is then
\begin{equation}\label{eq:a(t)osc}
a_{\textup{osc}}(t)=\sum_{k\neq 0}\widehat{a}(k)e^{itk}.
\end{equation}

\noi
With this decomposition, we notice that for the torus, the resonant and non-resonant part of the nonlinearity are the following:

\begin{align*}
f_{\textup{res}}(u,x)&=-i\sum_{k=-\infty}^{\infty}e^{ikx}
\sum_{\substack{k-l+m-j=0\\ |k|-|l|+|m|-|j|=0}}\hat{u}(j)\hat{u}(l)\cj{\hat{u}}(m),\\
f_{\textup{osc}}(u,s,x)&=-i\sum_{k=-\infty}^{\infty}e^{ikx}
\sum_{\substack{k-l+m-j=0\\ |k|-|l|+|m|-|j|\neq 0}}e^{is(|k|-|l|+|m|-|j|)}
\hat{u}(j)\hat{u}(l)\cj{\hat{u}}(m).
\end{align*}

\noi
A slight difference with the case of $\R$ is the definition of $F_{\textup{osc}}(u,t)$:
\[F_{\textup{osc}}(u,t,x):=-i\sum_{k=-\infty}^{\infty}e^{ikx}
\sum_{\substack{k-l+m-j=0\\ |k|-|l|+|m|-|j|\neq 0}}\frac{e^{it(|k|-|l|+|m|-|j|)}}{i(|k|-|l|+|m|-|j|)}
\hat{u}(j)\hat{u}(l)\cj{\hat{u}}(m),\]

\noi
whereas for $\R$, we had $F_{\textup{osc}}(u,t)=\int_0^tf_{\textup{osc}}(u,s)ds$.
Notice that in both cases we have that $\frac{\partial F_{\textup{osc}}}{\partial t}(u,t,x)=f_{\textup{osc}}(u,t,x)$.

As it was shown in \cite{PGSG res}, the following lemma holds:

\begin{lemma}\label{Resonances T}
We have that $k-l+m-j=0$ and $|k|-|l|+|m|-|j|=0$ if and only if we are in one of the following cases:

$\mathrm{(i)}$ If $k>0$ and $\{l,m,j\geq 0\}\cup\{k=l\}\cup\{k=j\}$

$\mathrm{(ii)}$ If $k=0$ and $\{l,m,j\geq 0\}\cup\{l,m,j\leq 0\}$

$\mathrm{(iii)}$ If $k<0$ and $\{l,m,j\leq 0\}\cup\{k=l\}\cup\{k=j\}$.
\end{lemma}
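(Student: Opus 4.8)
The plan is to prove the stated equivalence in both directions: the ``if'' direction is a one-line verification, while the ``only if'' direction is a short sign analysis organized around the positive and negative parts of the four frequencies. I first record two symmetries of the system $k-l+m-j=0$, $|k|-|l|+|m|-|j|=0$: it is invariant under the transpositions $k\leftrightarrow m$ and $l\leftrightarrow j$, and under the global sign flip $(k,l,m,j)\mapsto(-k,-l,-m,-j)$. I use these to cut down the case analysis.

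For the ``if'' direction, I would check each alternative in (i)--(iii) directly. If all of $k,l,m,j$ are $\geq 0$ then $|n|=n$ for each, so $|k|-|l|+|m|-|j|=k-l+m-j=0$; the case ``all $\leq 0$'' follows by the global sign flip. If $k=l$, then $k-l+m-j=0$ forces $m=j$, whence $|k|-|l|+|m|-|j|=0$ trivially; the case $k=j$ is symmetric (it forces $l=m$). Thus every configuration listed in (i)--(iii) satisfies both relations.

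For the ``only if'' direction, assume $k-l+m-j=0$ and $|k|-|l|+|m|-|j|=0$. Writing $n_+=\max(n,0)$ and $n_-=\max(-n,0)$, so that $n=n_+-n_-$ and $|n|=n_++n_-$, I would add and subtract the two relations to obtain the equivalent pair $k_++m_+=l_++j_+$ and $k_-+m_-=l_-+j_-$, and then split on the sign of $k$. If $k>0$ then $k_-=0$, so $m_-=l_-+j_-$; when $l,j\geq 0$ this forces $m_-=0$, i.e.\ $m\geq 0$, which is the ``all $\geq 0$'' case of (i), whereas otherwise (using $l\leftrightarrow j$, assume $l<0$) either $j<0$, which makes $k+m_+=l_++j_+=0$ impossible, or $j\geq 0$, which gives $m_-=-l>0$, hence $m_+=0$, hence $k=j$ and $m=l$, the ``$k=j$'' case of (i). If $k=0$ then $m_+=l_++j_+$ and $m_-=l_-+j_-$; if $m\geq 0$ then $l_-=j_-=0$, so all of $l,m,j$ are $\geq 0$, and if $m<0$ then all are $\leq 0$, which is (ii). The case $k<0$ reduces to $k>0$ via the global sign flip, landing in (iii).

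I do not expect any genuine obstacle here: the statement is purely combinatorial, and the only thing requiring care is the bookkeeping needed to confirm that the list (i)--(iii) is \emph{exactly} the solution set --- in particular, checking that the sign analysis omits no configuration and that the degenerate cases in which one or more of $k,l,m,j$ vanishes (which sit in the overlap of two of the listed alternatives) are consistently accounted for. The same scheme proves the $\R$-analogue in Lemma \ref{phase 0 R}.
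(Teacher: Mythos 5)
Your argument is correct, and in fact it supplies something the paper itself does not: the paper states Lemma \ref{Resonances T} with the phrase ``as it was shown in \cite{PGSG res}'' and gives no proof (likewise Lemma \ref{phase 0 R} is only said to ``follow its analogue in the case of $\T$''), so there is no in-paper proof to compare against. Your mechanism is the natural one and checks out in detail: writing $n=n_+-n_-$, $|n|=n_++n_-$ and adding/subtracting the two constraints converts them into the equivalent pair $k_++m_+=l_++j_+$ and $k_-+m_-=l_-+j_-$, after which the split on the sign of $k$ (with the $l\leftrightarrow j$ transposition and the global sign flip, both of which visibly preserve the system and permute the cases (i)--(iii) correctly) exhausts all configurations; the impossibility of $k>0$ with $l,j<0$ and the forced identifications $k=j$, $m=l$ (resp.\ $k=l$, $m=j$) come out exactly as you state, and the converse direction is the trivial verification you give, using $k-l+m-j=0$ as a standing hypothesis --- which is indeed the intended reading of the ``if and only if,'' since the lemma is applied only on the convolution set $k-l+m-j=0$. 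Your closing remark is also right that nothing in the argument uses integrality, so the identical computation (with $\xi_1,\xi_2,\xi_3,\xi_4$ in place of $k,l,m,j$) proves Lemma \ref{phase 0 R} on $\R$, where the cases $k=l$ and $k=j$ become the components $\{\xi_1=\xi_2,\ \xi_3=\xi_4\}$ and $\{\xi_1=\xi_4,\ \xi_2=\xi_3\}$ of the set $M$.
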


\noi
We decompose the region where $k-l+m-j=0$ and $|k|-|l|+|m|-|j|=0$
into disjoint sub-regions,
and we compute the Fourier transform of the resonant part $f_{\textup{res}}(u)$. We obtain the following ten terms:

1. The case $k,l,m,j\geq 0$:
\begin{align*}
&-i\sum_{\substack{k-l+m-j=0\\k,l,m,j\geq 0}}\hat{u}(j)\hat{u}(l)\cj{\hat{u}}(m)
=-i\mathcal{F}\big(\Pi_+ (|u_+|^2u_+)\big)(k)\pmb{1}_{k\geq 0}.
\end{align*}

2. The case $k\geq 0$, $k=l$, $m=j<0$:

\begin{align*}
&-i\sum_{\substack{l=k\geq 0\\m=j<0}}\hat{u}(j)\hat{u}(l)\cj{\hat{u}}(m)
=-i\hat{u}(k)\pmb{1}_{k\geq 0}\sum_{j=-\infty}^{-1}|\hat{u}(j)|^2
=-i\|u_-\|_{L^2}^2\hat{u}_+(k)\pmb{1}_{k\geq 0}.
\end{align*}

3. The case $k\geq 0$, $k=j$, $m=l<0$. We obtain as above $-i\|u_-\|_{L^2}^2\hat{u}_+(k)\pmb{1}_{k\geq 0}$.

4. The case $k=0$ and $l,m,j< 0$:
\begin{align*}
&-i\sum_{\substack{-l+m-j=0\\l,m,j<0}}\hat{u}(j)\hat{u}(l)\cj{\hat{u}}(m)
=-i\mathcal{F}\big(|u_-|^2u_-\big)(0).
\end{align*}

5. The case $k,l,m,j<0$:
\begin{align*}
&-i\sum_{\substack{k-l+m-j=0\\k,l,m,j< 0}}\hat{u}(j)\hat{u}(l)\cj{\hat{u}}(m)
=-i\mathcal{F}\big(\Pi_- (|u_-|^2u_-)\big)(k)\pmb{1}_{k< 0}.
\end{align*}

6. The case $k<0$, $l=0$, $j<0$, $m<0$:
\begin{align*}
-i\sum_{\substack{k+m-j=0, l=0,\\k,m,j< 0}}\hat{u}(j)\hat{u}(l)\cj{\hat{u}}(m)
&=-i\hat{u}(0)\pmb{1}_{k< 0}\sum_{j=k-1}^{-1}\hat{u}(j)\cj{\hat{u}}(j-k)\\
&=-i\hat{u}(0)\mathcal{F}\big(\Pi_-|u_-|^2)\big)(k)\pmb{1}_{k< 0}.
\end{align*}

7. The case $k<0$, $j=0$, $l<0$, $m<0$. We obtain as above
$-i\hat{u}(0)\mathcal{F}\big(\Pi_-|u_-|^2)\big)(k)\pmb{1}_{k< 0}$.

8. The case $k<0$, $m=0$, $l<0$, $j<0$:
\begin{align*}
-i\sum_{\substack{k-l-j=0, m=0,\\k,l,j< 0}}\hat{u}(j)\hat{u}(l)\cj{\hat{u}}(m)
&=-i\cj{\hat{u}}(0)\pmb{1}_{k< 0}\sum_{j=k-1}^{-1}\hat{u}(j)\hat{u}(k-j)\\
&=-i\cj{\hat{u}}(0)\mathcal{F}\big(u_-^2\big)(k)\pmb{1}_{k< 0}.
\end{align*}

9. The case $k<0$, $k=l$, $m=j\geq 0$:
\begin{align*}
&-i\sum_{\substack{k=l<0, \\ m=j\geq 0}}\hat{u}(j)\hat{u}(l)\cj{\hat{u}}(m)
=-i\hat{u}(k)\pmb{1}_{k< 0}\sum_{j=0}^{\infty}|\hat{u}(j)|^2=-i\|u_+\|_{L^2}^2\hat{u}_-(k)\pmb{1}_{k< 0}.
\end{align*}

10. The case $k<0$, $k=j$, $l=m\geq 0$. We obtain as above $-i\|u_+\|_{L^2}^2\hat{u}_-(k)\pmb{1}_{k< 0}$.

Thus, the resonant part of the nonlinearity is
\begin{align}\label{f_res T}
f_{\textup{res}}(u,x)=&-i\Pi_+(|u_+|^2u_+)
-2i\|u_-\|_{L^2}^2u_+
-i\mathcal{F}\big(|u_-|^2u_-\big)(0)\\
&-i\Pi_-(|u_-|^2u_-)
-2i\hat{u}(0)\Pi_-(|u_-|^2)
-i\cj{\hat{u}}(0)u_-^2
-2i\|u_+\|_{L^2}^2u_-.\notag
\end{align}

\begin{lemma}\label{Prop uniq}
Let $s>\frac{1}{2}$ and $W_0\in H^s_+(\T)$. We consider the renormalization group equation:
\begin{align}\label{RG of order 2}
\begin{cases}
\partial_tu=\eps^2f_{\textup{res}}(u)\\
u(0)=W_0
\end{cases}
\end{align}

\noi
This equation has a unique global solution in $H^s(\T)$ which coincides
with $W\in C(\R,H^s(\T))$, the solution of the following equation
\begin{align}\label{eqn W T}
\begin{cases}
i\partial_tW=\eps^2\Pi_+(|W|^2W)\\
W(0)=W_0
\end{cases}
\end{align}

\noi
In particular, $u_-(t)=0$ for all $t\in\R$.
\end{lemma}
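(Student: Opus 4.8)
The plan is to regard \eqref{RG of order 2} as an ODE in the Banach space $H^s(\T)$ and then pin down its solution by a uniqueness argument. First I would verify that the vector field $f_{\textup{res}}$, given explicitly by \eqref{f_res T}, maps $H^s(\T)$ into itself and is Lipschitz on bounded sets. For $s>\frac12$ the space $H^s(\T)$ is a Banach algebra, so all the pointwise products occurring in \eqref{f_res T} (namely $|u_+|^2u_+$, $|u_-|^2u_-$, $|u_-|^2$, $u_-^2$) land in $H^s(\T)$ and depend polynomially on $u$; the remaining ingredients are the scalars $\hat u(0)$, $\cj{\hat u}(0)$, $\|\Pi_{\pm}u\|_{L^2}^2$ and $\mathcal F(|u_-|^2u_-)(0)$, which are bounded, indeed polynomial, functions of $u\in H^s(\T)$. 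Hence $f_{\textup{res}}$ is a polynomial map of degree at most $3$, locally Lipschitz, and the Cauchy-Lipschitz theorem yields a unique maximal solution $u\in C\big((-T_-,T_+),H^s(\T)\big)$ of \eqref{RG of order 2}, with the usual blow-up alternative at $T_{\pm}$.

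Next I would show that $W$, the solution of \eqref{eqn W T}, actually solves \eqref{RG of order 2}. The right-hand side $-i\eps^2\Pi_+(|W|^2W)$ of \eqref{eqn W T} takes values in the closed subspace $L^2_+(\T)$, and $W(0)=W_0\in H^s_+(\T)\subset L^2_+(\T)$; since an ODE whose vector field stays in a closed subspace preserves that subspace, we get $W(t)\in H^s_+(\T)$ for all $t$, that is $W_-(t)\equiv 0$. Substituting $u=W$ into \eqref{f_res T}, every term carrying a factor $u_-$ vanishes, including $\hat u(0)\,\Pi_-(|u_-|^2)$ and $\cj{\hat u}(0)\,u_-^2$ irrespective of the value of $\hat W(0)$, so that $f_{\textup{res}}(W)=-i\Pi_+(|W_+|^2W_+)=-i\Pi_+(|W|^2W)$. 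Equivalently, this can be read straight off Lemma \ref{Resonances T}: when $\hat W$ is supported in $\{k\ge 0\}$, the only surviving resonant interactions are those with $k,l,m,j\ge 0$. Thus $\partial_tW=\eps^2 f_{\textup{res}}(W)$ with $W(0)=W_0$, i.e.\ $W$ is a solution of \eqref{RG of order 2}.

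To conclude, uniqueness of the maximal solution of \eqref{RG of order 2} forces $u$ to coincide with $W$ on the intersection of their intervals of existence. Since the Szeg\"o equation is globally well posed in $H^s_+(\T)$ for $s>\frac12$, and the substitution $W(t,x)=y(\eps^2t,x)$ turns \eqref{eqn W T} into the Szeg\"o equation, we have $W\in C(\R,H^s_+(\T))$; consequently $\|u(t)\|_{H^s}=\|W(t)\|_{H^s}$ remains finite on every finite time interval, so the blow-up alternative forces $T_{\pm}=\infty$ and $u=W$ on all of $\R$. In particular $u_-(t)=W_-(t)=0$ for every $t\in\R$.

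The only genuinely delicate point is the first step: confirming that $f_{\textup{res}}$ as written in \eqref{f_res T} is a bona fide locally Lipschitz map of $H^s(\T)$, which rests on the algebra property of $H^s$ and the continuity on $H^s(\T)$ of the functionals $\hat u(0)$ and $\|\Pi_{\pm}u\|_{L^2}^2$, together with the elementary remark that an ODE with vector field in a closed subspace keeps its solution in that subspace (this is what keeps $W$ in $L^2_+$). Everything after that is bookkeeping with formula \eqref{f_res T} and the standard ODE uniqueness argument.
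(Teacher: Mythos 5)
Your proof is correct and follows essentially the same route as the paper: local Lipschitz continuity of $f_{\textup{res}}$ via the algebra property of $H^s(\T)$, Cauchy--Lipschitz uniqueness, and the observation that when $W_-\equiv 0$ (guaranteed since \eqref{eqn W T} reduces to the globally well-posed Szeg\"o equation on $H^s_+$) every term of \eqref{f_res T} except $-i\Pi_+(|u_+|^2u_+)$ vanishes, so $W$ solves \eqref{RG of order 2}. Your only additions are minor refinements the paper leaves implicit, namely the closed-subspace invariance argument for $W_-\equiv 0$ and the explicit blow-up-alternative step giving globality of $u$.
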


\begin{proof}
We first notice that $f_{\textup{res}}:H^s(\T)\to H^s(\T)$, $s>\frac{1}{2}$, defined in equation
\eqref{f_res T} is a locally Lipschitz mapping. Indeed, one can prove using the structure of algebra of $H^s(\T)$, that
\[\|f_{\textup{res}}(u)-f_{\textup{res}}(v)\|_{H^s}\leq \|u-v\|_{H^s}(\|u\|_{H^s}^2+\|v\|_{H^s}^2),\]

\noi
for all $u,v\in H^s(\T)$. Then, by the Cauchy-Lipschitz theorem it follows that
equation \eqref{RG of order 2} has an unique solution in $H^s(\T)$.

With the change of variables $W(t,x)=y(\eps^2t,x)$, we obtain from
equation \eqref{eqn W T} that $y$ satisfies the Szeg\"o equation \eqref{Szego simple}.
The Szeg\"o equation has a unique global solution
supported on non-negative frequencies. Thus $W$ is unique and satisfies $W_-(t)=0$
for all $t\in\R$.
The only term in the expression of $f_{\textup{res}}(u)$ \eqref{f_res T},
which does not contain $u_-$ is $-i\Pi(|u_+|^2u_+)$. Therefore we immediately notice that
the solution of the equation \eqref{eqn W T}
is also the solution of the equation \eqref{RG of order 2}.
\end{proof}

\subsection{Estimates for the oscillatory part of the nonlinearity in the case of $\T$}

To re-prove Theorem \ref{Thm T} we apply exactly the same method used in the proof
of Theorem \ref{Main theorem}. The only changes that appear are in the estimate of
$F_{\textup{osc}}(W(t),t)$. We show that on $\T$ we obtain a better estimate
than on $\R$.

\begin{lemma}\label{Est T 1}
Let $s>\frac{1}{2}$. For all $W\in H_+^s(\T)$, we have that
\begin{align*}
\|F_{\textup{osc}}(W,t)\|_{H^s(\T)}\leq &C_s\|W\|^3_{H^s(\T)},\\
\|D_{W}F_{\textup{osc}}(W,t)\cdot f_{\textup{res}}(W)\|_{H^s(\T)}\leq &C_s\|W\|^5_{H^s(\T)},
\end{align*}

\noi
where $C_s$ is a constant depending only on $s$.
\end{lemma}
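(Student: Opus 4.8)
The plan is to prove Lemma \ref{Est T 1} by imitating the argument of Lemma \ref{Est 1 R}, but exploiting the fact that on $\T$ the nonzero resonant phases $\phi = |k|-|l|+|m|-|j|$ are nonzero \emph{integers}, hence $|\phi| \geq 1$, which removes the small-divisor loss that forced the $t^{1/2}$ term in the case of $\R$. First I would recall that
\[
\widehat{F_{\textup{osc}}}(W,t)(k) = -i\sum_{\substack{k-l+m-j=0\\ |k|-|l|+|m|-|j|\neq 0}}\frac{e^{it(|k|-|l|+|m|-|j|)}-1}{i(|k|-|l|+|m|-|j|)}\,\hat{W}(j)\hat{W}(l)\cj{\hat{W}}(m),
\]
so that, writing $\phi = |k|-|l|+|m|-|j|$, the multiplier $\frac{e^{it\phi}-1}{i\phi}$ has absolute value at most $\frac{2}{|\phi|} \leq 2$ on the region of summation, since $\phi$ is a nonzero integer there. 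Pointwise in $k$ this gives
\[
\bigl|\widehat{F_{\textup{osc}}}(W,t)(k)\bigr| \leq 2\sum_{k-l+m-j=0}|\hat W(j)||\hat W(l)||\hat W(m)| = 2\,\mathcal{F}\bigl(|\,|W|\,|^2\,|W|\,\bigr)(k),
\]
where here $|W|$ denotes the function with Fourier coefficients $|\hat W(k)|$. Then $\|F_{\textup{osc}}(W,t)\|_{H^s(\T)} \leq C\,\bigl\|\,|W|^2|W|\,\bigr\|_{H^s(\T)}$, and since $H^s(\T)$ is a Banach algebra for $s>\frac{1}{2}$ and passing to absolute values of Fourier coefficients does not increase the $H^s$-norm, this is bounded by $C_s\|W\|_{H^s(\T)}^3$, which is the first estimate.

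For the second estimate I would compute $D_W F_{\textup{osc}}(W,t)\cdot f_{\textup{res}}(W)$ the same way as in the proof of Lemma \ref{Est 1 R}: differentiating the cubic expression in $W$ produces two types of terms, one where a factor $\hat W$ is replaced by $\widehat{f_{\textup{res}}(W)}$ and one where a factor $\cj{\hat W}$ is replaced by $\cj{\widehat{f_{\textup{res}}(W)}}$, each still carrying the bounded multiplier $\frac{e^{it\phi}-1}{i\phi}$ with $|\phi|\geq 1$. Bounding these multipliers by $2$ and again passing to absolute values of Fourier coefficients, I get
\[
\|D_W F_{\textup{osc}}(W,t)\cdot f_{\textup{res}}(W)\|_{H^s(\T)} \leq C\bigl(\|\,|W|^2 f_{\textup{res}}(W)\,\|_{H^s} + \|\,W^2\,\cj{f_{\textup{res}}(W)}\,\|_{H^s}\bigr) \leq C_s\|W\|_{H^s}^4\,\|f_{\textup{res}}(W)\|_{H^s}.
\]
Finally I would observe — as in the $\R$ case, but now using the explicit formula \eqref{f_res T} for $f_{\textup{res}}$ — that since $W\in H^s_+(\T)$ we have $W_-=0$, so \eqref{f_res T} collapses to $f_{\textup{res}}(W) = -i\,\Pi_+(|W|^2W)$; hence $\|f_{\textup{res}}(W)\|_{H^s} \leq \|\,|W|^2W\,\|_{H^s} \leq C_s\|W\|_{H^s}^3$ by the algebra property. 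Combining gives $\|D_W F_{\textup{osc}}(W,t)\cdot f_{\textup{res}}(W)\|_{H^s(\T)} \leq C_s\|W\|_{H^s}^5$, completing the proof.

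The only genuinely delicate point — and the main obstacle — is the claim that replacing the Fourier coefficients by their absolute values does not increase the $H^s$-norm of a product, and more precisely that $\bigl\|\mathcal{F}^{-1}(\,|\hat u|\,)\bigr\|_{H^s} = \|u\|_{H^s}$ together with the algebra estimate applied to such ``linearized'' functions. This is immediate since the $H^s(\T)$-norm depends only on $(|\hat u(k)|)_k$ through $\sum_k \langle k\rangle^{2s}|\hat u(k)|^2$, and the algebra inequality $\|fg\|_{H^s}\leq C_s\|f\|_{H^s}\|g\|_{H^s}$ proved via $\langle k\rangle^s \lesssim \langle l\rangle^s + \langle k-l\rangle^s$ never uses signs or phases of the Fourier coefficients; so the domination $|\widehat{F_{\textup{osc}}}(W,t)(k)| \leq 2\sum |\hat W(j)||\hat W(l)||\hat W(m)|$ transfers directly to an $H^s$-bound. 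Everything else is a routine repetition of the computations already carried out in Lemma \ref{Est 1 R}, with the decisive simplification that on $\T$ one has $|\phi|\geq 1$ rather than the possibility $|\phi|\to 0$, which is exactly why the $\T$-estimate has no $t^{1/2}$ term and yields the cleaner bounds $C_s\|W\|_{H^s}^3$ and $C_s\|W\|_{H^s}^5$.
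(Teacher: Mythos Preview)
Your proposal is correct and follows essentially the same route as the paper. Both arguments hinge on the single observation that on $\T$ the phase $\phi=|k|-|l|+|m|-|j|$ is a nonzero integer on the domain of summation, so the multiplier is bounded by a constant, after which one dominates $|\widehat{F_{\textup{osc}}}(k)|$ by the convolution $\sum_{k-l+m-j=0}|\hat W(j)||\hat W(l)||\hat W(m)|$ and applies the algebra property of $H^s(\T)$ for $s>\tfrac12$. The paper writes this last step out explicitly (introducing the auxiliary functions $V^{\ast}$, $U^{\ast}$ with Fourier coefficients $|\hat W_j|$ and $\langle j\rangle^{s}|\hat W_j|$ and reducing to $\|U^{\ast}\|_{L^2}^2\|V^{\ast}\|_{L^{\infty}}^4$ via Sobolev embedding), whereas you invoke the algebra inequality directly together with the remark that the $H^s$-norm depends only on $|\hat u(k)|$; these are the same argument at different levels of detail.

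One small slip worth noting: on $\T$ the paper's $F_{\textup{osc}}$ is defined with multiplier $\dfrac{e^{it\phi}}{i\phi}$ (the mean-zero antiderivative in $t$), not $\dfrac{e^{it\phi}-1}{i\phi}$ as on $\R$; you wrote the latter. This is harmless for the estimate since $|\phi|\geq 1$ bounds either multiplier uniformly, but be aware of the convention when you use $F_{\textup{osc}}$ elsewhere in the $\T$ argument.
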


\begin{proof}
The Fourier coefficients of $F_{\textup{osc}}(W,t)$ are:
\begin{align*}
\mathcal{F}(F_{\textup{osc}})(W,t,k )
=-\sum_{\substack{k-l+m-j=0,\\|k|-|l|+|m|-|j|\neq 0}}\frac{e^{it(|k|-|l|+|m|-|j|)}}{i(|k|-|l|+|m|-|j|)}
\hat{W}(j)\hat{W}(l)\cj{\hat{W}}(m).
\end{align*}

\noi
Setting $\hat{W}_k:=\hat{W}(k)$ for all $k\in\Z$, and using the convexity of the
function $|x|^{\alpha}$ if $\alpha>1$, we have that
\begin{align*}
&\|F_{\textup{osc}}(W,t)\|_{H^s(\T)}^2=\sum_{k\in\Z}(1+|k|^2)^s\Big|
\sum_{\substack{k-l+m-j=0,\\|k|-|l|+|m|-|j|\neq 0}}\frac{e^{it(|k|-|l|+|m|-|j|)}}{|k|-|l|+|m|-|j|}
\hat{W}_j\hat{W}_l\cj{\hat{W}}_m\Big|^2\\
&\leq\sum_{k\in\Z}(1+|k|^2)^s\Big(\sum_{k=l-m+j}
|\hat{W}_j\hat{W}_l\cj{\hat{W}}_m|\Big)^2\\
&\leq\sum_{k\in\Z}(1+|k|^2)^s\sum_{k=l-m+j}
|\hat{W}_j\hat{W}_l\cj{\hat{W}}_m|\sum_{k=\tilde{l}-\tilde{m}+\tilde{j}}
|\hat{W}_{\tilde{j}}\hat{W}_{\tilde{l}}\cj{\hat{W}}_{\tilde{m}}|\\
&\leq \sum_{k\in\Z}(1+|k|^2)^s\sum_{\substack{k=l-m+j,\\k=\tilde{l}-\tilde{m}+\tilde{j}}}
|\hat{W}_j\hat{W}_l\cj{\hat{W}}_m\hat{W}_{\tilde{j}}\hat{W}_{\tilde{l}}\cj{\hat{W}}_{\tilde{m}}|\\
&=\sum_{l-\tilde{l}-m+\tilde{m}+j-\tilde{j}=0}(1+|l-m+j|^2)^{s/2}
(1+|\tilde{l}-\tilde{m}+\tilde{j}|^2)^{s/2}|\hat{W}_j||\hat{W}_l||
\hat{W}_m||\hat{W}_{\tilde{j}}||\hat{W}_{\tilde{l}}||\hat{W}_{\tilde{m}}|\\
&\leq \sum_{l-\tilde{l}-m+\tilde{m}+j-\tilde{j}=0}(1+3(|l|^2+|m|^2+|j|^2))^{s/2}
(1+3(|\tilde{l}|^2+|\tilde{m}|^2+|\tilde{j}|^2))^{s/2}\\
&\hphantom{XXXXXXXX}\times|\hat{W}_j||\hat{W}_l||
\hat{W}_m||\hat{W}_{\tilde{j}}||\hat{W}_{\tilde{l}}||\hat{W}_{\tilde{m}}|\\
&\leq 9\sum_{l-\tilde{l}-m+\tilde{m}+j-\tilde{j}=0}[(1+|l|^2)+(1+|m|^2)+(1+|j|^2)]^{s/2}
\\
&\hphantom{XXXXXXXX}\times [(1+|\tilde{l}|^2)+(1+|\tilde{m}|^2)+(1+|\tilde{j}|^2)]^{s/2}|\hat{W}_j||\hat{W}_l||
\hat{W}_m||\hat{W}_{\tilde{j}}||\hat{W}_{\tilde{l}}||\hat{W}_{\tilde{m}}|\\
&\leq C_s \sum_{l-\tilde{l}-m+\tilde{m}+j-\tilde{j}=0}[(1+|l|^2)^{s/2}+(1+|m|^2)^{s/2}+(1+|j|^2)^{s/2}]
\\
&\hphantom{XXXXXXX}\times[(1+|\tilde{l}|^2)^{s/2}+(1+|\tilde{m}|^2)^{s/2}+(1+|\tilde{j}|^2)^{s/2}]|\hat{W}_j||\hat{W}_l||
\hat{W}_m||\hat{W}_{\tilde{j}}||\hat{W}_{\tilde{l}}||\hat{W}_{\tilde{m}}|\\
&\leq C_s \sum_{l-\tilde{l}-m+\tilde{m}+j-\tilde{j}=0}(1+|j|^2)^{s/2}|\hat{W}_j||\hat{W}_l||
\hat{W}_m|(1+|\tilde{j}|^2)^{s/2}|\hat{W}_{\tilde{j}}||\hat{W}_{\tilde{l}}||\hat{W}_{\tilde{m}}|+\text{ similar terms }\\
\end{align*}

\noi
We consider the functions $V^{\ast}=\sum_{j\in\Z}e^{ixj}\hat{V}_j^{\ast}$ and
$U^{\ast}=\sum_{j\in\Z}e^{ixj}\hat{U}_j^{\ast}$, where
\begin{align*}
\hat{V}_j^{\ast}:=&|\hat{W}_j|\\
\hat{U}_j^{\ast}:=&(1+|j|^2)^{s/2}|\hat{W}_j|.
\end{align*}

\noi
Ignoring the other terms in the above sum, which can be treated in a similar manner as the term we keep,
and using the Sobolev embedding $H^s(\T)\subset L^{\infty}(\T)$ if $s>1/2$, we obtain
\begin{align*}
\|F_{\textup{osc}}(W(t),t)\|_{H^s(\T)}^2&\leq C_s \sum_{l-\tilde{l}-m+\tilde{m}+j-\tilde{j}=0}\hat{U}_j^{\ast}\hat{V}_l^{\ast}
\hat{V}_m^{\ast}\hat{U}_{\tilde{j}}^{\ast}\hat{V}_{\tilde{l}}^{\ast}\hat{V}_{\tilde{m}}^{\ast}
\leq C_s \int_{\T} U^{\ast}\cj{U^{\ast}}(V^{\ast})^2(\cj{V^{\ast}})^2dz\\
&\leq C_s \int_{\T} |U^{\ast}|^2|V^{\ast}|^4dz\leq C_s \|U^{\ast}\|_{L^2(\T)}^2
\|V^{\ast}\|_{L^{\infty}(\T)}^4\\
&\leq C_s \|U^{\ast}\|_{L^2(\T)}^2\|V^{\ast}\|_{H^{s}(\T)}^4
\leq C_s \|V^{\ast}\|_{H^{s}(\T)}^2\|V^{\ast}\|_{H^{s}(\T)}^4\leq C_s \|W\|_{H^{s}(\T)}^6,
\end{align*}

\noi
where $C_s$ denotes a constant depending on $s$.

The second estimate in the statement,
\[\|D_{W}F_{\textup{osc}}(W,t)\cdot f_{\textup{res}}(W)\|_{H^s(\T)}\leq C_s\|W\|^5_{H^s(\T)},\]

\noi
can be proved similarly.
\end{proof}

\subsection{Proof of Theorem \ref{Thm T}}

By the hypothesis we have that $\|W(t)\|_{H^s}\leq C\Big(\log(\frac{1}{\eps^{\delta}})\Big)^{\alpha}$.
Using the definition of $u_{\textup{app}}$ \eqref{def uapp}, and Lemma \ref{Est T 1}, we obtain that
$\|u_{\textup{app}}(t)\|_{H^s}\leq C\Big(\log(\frac{1}{\eps^{\delta}})\Big)^{\alpha}$.
Proceeding as in the proof of Theorem \ref{Main theorem}, we obtain for $0\leq t\leq \frac{1}{\eps^2}\log(\frac{1}{\eps^{\delta}})^{1-2\alpha}$
\begin{align*}
\|w(t)\|_{H^s}&\leq C\Big(\log(\frac{1}{\eps^{\delta}})\Big)^{2\alpha}\eps^2\int_0^t\|w(\tau)\|_{H^s}d\tau
 +C\eps^4\Big(\log(\frac{1}{\eps^{\delta}})\Big)^{5\alpha} t.
\end{align*}

\noi
This yields, by Gronwall's inequality, that for $0\leq t\leq \frac{1}{\eps^2}\Big(\log(\frac{1}{\eps^{\delta}})\Big)^{1-2\alpha}$
we have
\[\|w(t)\|_{H^s}\leq C\eps^4\Big(\log(\frac{1}{\eps^{\delta}})\Big)^{5\alpha}t
e^{C\eps^2\Big(\log(\frac{1}{\eps^{\delta}})\Big)^{2\alpha}t}\leq \eps^{2-C_0\delta},\]

\noi
where $C_0>0$.
Since $w(t)=u(t)-W(t)-\eps^2F_{\textup{osc}}(W(t),t)$ and
\\ $\|F_{\textup{osc}}(W(t),t)\|_{H^s}\leq C\Big(\log(\frac{1}{\eps^{\delta}})\Big)^{3\alpha}$, it follows that
\begin{equation*}
\|u(t)-W(t)\|_{H^s(\T)}\leq C\eps^{2-C_0\delta} \text{ if } 0\leq t\leq \frac{1}{\eps^2}\Big(\log(\frac{1}{\eps^{\delta}})\Big)^{1-2\alpha}.
\end{equation*}

\noi
Then, the changes of variables $v(t)=\eps e^{i|D|t}u(t)$ and $\W=\eps W$ yield the conclusion
\begin{equation*}
\|v(t)-e^{-i|D|t}\W(t)\|_{H^s(\T)}\leq C\eps^{3-C_0\delta} \text{ if } 0\leq t\leq \frac{1}{\eps^2}\Big(\log(\frac{1}{\eps^{\delta}})\Big)^{1-2\alpha}.
\end{equation*}

\section{Second order approximation for the \eqref{eq:dirac} \,\,equation on $\T$}

\subsection{The averaging method at order two}

As before, in the \eqref{eq:dirac} equation with
initial condition $v(0)=\W_0=\eps W_0$,
 we make the change of variables $u(t)=\frac{1}{\eps}e^{i|D|t}v(t)$.
Then $u$ satisfies the equation:
\begin{equation*}
\begin{cases}
\partial_tu=-i\eps^2 e^{i|D|t}(|e^{-i|D|t}u|^2e^{-i|D|t}u)=:f(u,t)\\
u(0)=W_0.
\end{cases}
\end{equation*}

\noi
The averaging method at order two introduced by
Temam and Wirosoetisno in \cite{Temam Wirosoetisno}, consists in
considering the following averaging ansatz:
\begin{equation}\label{def uapp 2}
u_{\textup{app}}(t)=W(t)+\eps^2 N_1(W,t)+\eps^4N_2(W,t)=:N(W,t,\eps),
\end{equation}

\noi
where $W$ is a solution of  the following averaged equation:
\begin{equation}\label{eqn W 2}
\begin{cases}
\partial_tW=\eps^2 R_1(W)+\eps^4 R_2(W)=:R(W,\eps)\\
W(0)=W_0.
\end{cases}
\end{equation}

\noi
The use of these notations is explained by the fact that $R_1,R_2$ turn out to be resonant terms,
while $N_1,N_2$ are non-resonant (oscillatory) terms.

A formal computation then shows that
\begin{align*}
\partial_tu_{\textup{app}}(t)=&\frac{\partial N(W,t,\eps)}{\partial t}=
N'(W,t,\eps)\cdot \frac{\partial W}{\partial t}+\frac{\partial N}{\partial t}(W,t,\eps)\\
=&(\eps^2N_1'(W,t)+\eps^4N_2'(W,t))\cdot (\eps^2 R_1(W)+\eps^4 R_2(W))+\frac{\partial W}{\partial t}\\
&+\eps^2\frac{\partial N_1}{\partial t}(W,t)+\eps^4\frac{\partial N_2}{\partial t}(W,t)\\
=&\eps^2\Big(R_1(W)+\frac{\partial N_1}{\partial t}(W,t)\Big)\\
&+\eps^4\Big(R_2(W)+N_1'(W,t)\cdot R_1(W)
+\frac{\partial N_2}{\partial t}(W,t)\Big)+O(\eps^6).
\end{align*}

\noi
We now formally Taylor-expand $f(u_{\textup{app}}(t),t)$ around $W(t)$,
\begin{align*}
f(u_{\textup{app}},t)&=f(W,t)+f'(W,t)\cdot (u_{\textup{app}}-W)+O(\eps^4)\\
&=f(W,t)+\eps^2f'(W,t)\cdot N_1(W,t)+O(\eps^4).
\end{align*}

\noi
We replace the two expansions into the equation
\begin{align*}
\partial_tu_{\textup{app}}=\eps^2f(u_{\textup{app}},t)+O(\eps^6)
\end{align*}

\noi
in order to determine $R_1,R_2,N_1,N_2$ which yield an approximate solution.
Identifying the coefficients according to the powers of $\eps$, we obtain the equations
\begin{align}\label{eqn R,N}
R_1(W)+\frac{\partial N_1}{\partial t}(W,t)=&f(W,t)\\
R_2(W)+N_1'(W,t)\cdot R_1(W)+\frac{\partial N_2}{\partial t}(W,t)=&f'(W,t)\cdot N_1(W,t)
\end{align}

\noi
Thus, $R_1$ is the part of $f(W,t)$ which does not explicitly depend on $t$. According
to the decomposition given in equations \eqref{eq:a(t)}, \eqref{eq:a(t)res}, and \eqref{eq:a(t)osc}, we have:
\begin{align*}
R_1(W)=f_{\textup{res}}(W) \,\,\,\,\,\,\,\,\,\text{ and } \,\,\,\,\,\,\,\,\,N_1(W,t)=F_{\textup{osc}}(W,t).
\end{align*}

\noi
Then, from the second equation we have:
\begin{align}\label{eqn R_2,N_2}
R_2(W)=&\{f'(W,t)\cdot N_1(W,t)\}_{\textup{res}}-\{N_1'(W,t)\cdot R_1(W)\}_{\textup{res}}\\
\frac{\partial N_2}{\partial t}(W,t)=&\{f'(W,t)\cdot N_1(W,t)\}_{\textup{osc}}-\{N_1'(W,t)\cdot R_1(W)\}_{\textup{osc}}.\notag
\end{align}

\noi
Replacing $R_1,N_1$ and noticing that $F'_{\textup{osc}}(W,t)\cdot f_{\textup{res}}(W)$
does not have a resonant part, we obtain:
\begin{align}\label{eqn R_2,N_2 explicit}
R_2(W)=&\{f'(W,t)\cdot F_{\textup{osc}}(W,t)\}_{\textup{res}}-\{F'_{\textup{osc}}(W,t)
\cdot f_{\textup{res}}(W)\}_{\textup{res}}\\
=&\{f'(W,t)\cdot F_{\textup{osc}}(W,t)\}_{\textup{res}}\notag\\
\frac{\partial N_2}{\partial t}(W,t)=&\{f'(W,t)\cdot F_{\textup{osc}}(W,t)\}_{\textup{osc}}
-F'_{\textup{osc}}(W,t)\cdot f_{\textup{res}}(W).\notag
\end{align}

We set $w(t):=u(t)-u_{\textup{app}}(t)$. In what follows, we
determined a simplified version of the equation
satisfied by $w$. First, by the definition of $u_{\textup{app}}$ \eqref{def uapp 2},
we have that $w$ satisfies:
\begin{align*}
\begin{cases}
\frac{\partial w}{\partial t}=\eps^2 f(u,t)-\frac{\partial W}{\partial t}
-\eps^2\frac{\partial N_1}{\partial t}(W,t)-\eps^2N_1'(W,t)\cdot\frac{\partial W}{\partial t}
-\eps^4\frac{\partial N_2}{\partial t}(W,t)-\eps^4N_2'(W,t)\cdot\frac{\partial W}{\partial t}\\
w(0)=0
\end{cases}
\end{align*}

\noi
We consider the following Taylor expansion of $f(u)$ around $W$:
\begin{align*}
f(u,t)=&f(w+u_{\textup{app}})=f(w+W+\eps^2N_1+\eps^4N_2,t)\\
=&f(W,t)+f'(W,t)\cdot (w+\eps^2N_1+\eps^4N_2)\\
&+\int_0^1f''(\alpha(w+\eps^2N_1+\eps^4N_2)+W)\cdot (w+\eps^2N_1+\eps^4N_2)\\
&\hphantom{XXXXXXXXXXXXX}\otimes(w+\eps^2N_1+\eps^4N_2)(1-\alpha)d\alpha.
\end{align*}

\noi
Replacing this into the equation of $w$ and using the equation \eqref{eqn W 2}, we obtain that
\begin{align*}
\frac{\partial w}{\partial t}=&\eps^2f(W,t)+\eps^2f'(W,t)\cdot (w+\eps^2N_1+\eps^4N_2)\\
&+\eps^2\int_0^1f''(\alpha(w+\eps^2N_1+\eps^4N_2)+W)\cdot (w+\eps^2N_1+\eps^4N_2)\\
&\hphantom{XXXXXXXXXXXXXXXXXXX}\otimes(w+\eps^2N_1+\eps^4N_2)(1-\alpha)d\alpha\\
&-\eps^2 R_1(W)-\eps^4 R_2(W)-\eps^2\frac{\partial N_1}{\partial t}(W,t)-\eps^2N_1'(W,t)\cdot(\eps^2 R_1(W)+\eps^4 R_2(W))\\
&-\eps^4\frac{\partial N_2}{\partial t}(W,t)-\eps^4N_2'(W,t)\cdot(\eps^2 R_1(W)+\eps^4 R_2(W)).
\end{align*}

\noi
By the equations \eqref{eqn R,N}, it follows that
\begin{align*}
\frac{\partial w}{\partial t}=&\eps^2f'(W,t)\cdot (w+\eps^4N_2)\\
&+\eps^2\int_0^1f''(\alpha(w+\eps^2N_1+\eps^4N_2)+W)\cdot (w+\eps^2N_1+\eps^4N_2)\\
&\hphantom{XXXXXXXXXXXXXXXXXXX}\otimes(w+\eps^2N_1+\eps^4N_2)(1-\alpha)d\alpha\\
&-\eps^6N_1'(W,t)\cdot R_2(W)-\eps^4N_2'(W,t)\cdot(\eps^2 R_1(W)+\eps^4 R_2(W)).
\end{align*}

\noi
Integrating from $0$ to $t$, we then obtain that
\begin{align}\label{Duhamel difference ord 2}
w(t)=&\eps^2\int_0^tf'(W,\tau)\cdot w(\tau)d\tau+ \eps^6\int_0^t f'(W,\tau)\cdot N_2(W,\tau)d\tau \\
&-\eps^6\int_0^t N_1'(W,\tau)\cdot R_2(W)d\tau-\eps^4\int_0^tN_2'(W,\tau)\cdot(\eps^2 R_1(W)+\eps^4 R_2(W))d\tau\notag\\
&+\eps^2\int_0^t\int_0^1f''(\alpha(w+\eps^2N_1+\eps^4N_2)+W)
\cdot (w+\eps^2N_1+\eps^4N_2)\notag\\
&\hphantom{XXXXXXXXXXXXXXXXXXX}\otimes(w+\eps^2N_1+\eps^4N_2)(1-\alpha)d\alpha d\tau.\notag
\end{align}

\subsection{Study of the second order averaged equation in the case of $\T$}

Let $W_0\in H^s_+(\T)$, $s>1/2$. We
consider the averaged equation
\begin{equation*}
\begin{cases}
\partial_tW=\eps^2 R_1(W)+\eps^4 R_2(W)\\
W(0)=W_0.
\end{cases}
\end{equation*}

\noi
Since we already computed $R_1$ and $R_2$, we can rewrite this equation as:
\begin{equation*}
\begin{cases}
\partial_tW=\eps^2 f_{\textup{res}}(W)+\eps^4 \{f'(W,t)\cdot F_{\textup{osc}}(W,t)\}_{\textup{res}}\\
W(0)=W_0.
\end{cases}
\end{equation*}

\noi
Setting $\mathcal{W}=\eps W$, we have that $\mathcal{W}$ satisfies the equation:
\begin{equation}\label{averaging eqn}
\begin{cases}
\partial_t\mathcal{W}=f_{\textup{res}}(\W)+\{f'(\W,t)\cdot F_{\textup{osc}}(\W,t)\}_{\textup{res}}\\
\W(0)=\eps W_0=:\W_0.
\end{cases}
\end{equation}

\begin{lemma}\label{LWP for averaging eqn}
Let $s>\frac{1}{2}$. The problem \eqref{averaging eqn} is locally well-posed in $H^s(\T)$ at least on a time-interval
$[0,\frac{C}{\eps^2}]$, where $C>0$.
\end{lemma}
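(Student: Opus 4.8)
The plan is to treat \eqref{averaging eqn} as an \emph{autonomous} ordinary differential equation in the Banach space $H^s(\T)$, $s>\frac{1}{2}$, and to run a Cauchy--Lipschitz argument reinforced by a short continuity argument that exploits the $\eps$-smallness of the data $\W_0=\eps W_0$. First note that the ``$\textup{res}$'' operation $\{a(\cdot)\}_{\textup{res}}=\frac{1}{2\pi}\int_0^{2\pi}a(\tau)\,d\tau$ is a time-average taken with $\W$ frozen, so $\{f'(\W,t)\cdot F_{\textup{osc}}(\W,t)\}_{\textup{res}}=:R_2(\W)$ depends on $\W$ alone and \eqref{averaging eqn} reads $\partial_t\W=G(\W)$ with $G(h):=f_{\textup{res}}(h)+R_2(h)$. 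I would then check that $G\colon H^s(\T)\to H^s(\T)$ is locally Lipschitz, with
\[\|G(h)\|_{H^s}\le C_s\big(\|h\|_{H^s}^3+\|h\|_{H^s}^5\big),\qquad \|G(h_1)-G(h_2)\|_{H^s}\le C_s\|h_1-h_2\|_{H^s}\big(\|h_1\|_{H^s}^2+\|h_2\|_{H^s}^2+\|h_1\|_{H^s}^4+\|h_2\|_{H^s}^4\big).\]
For the cubic piece $f_{\textup{res}}$ this is precisely the bound already established in the proof of Lemma \ref{Prop uniq}. For the quintic piece $R_2$ I would combine: $h\mapsto F_{\textup{osc}}(h,t)$ is a bounded trilinear-type map, $\|F_{\textup{osc}}(h,t)\|_{H^s}\le C_s\|h\|_{H^s}^3$ uniformly in $t$, together with its Lipschitz analogue (both from the multilinear/algebra computation in the proof of Lemma \ref{Est T 1}, whose only delicate point, the denominators $|k|-|l|+|m|-|j|$, is harmless on $\T$ since they are integers $\ge 1$ in modulus); for each $t$, $h\mapsto f'(h,t)\cdot g$ is bounded bilinearly $H^s\times H^s\to H^s$ uniformly in $t$, by the algebra structure of $H^s(\T)$ and the fact that $e^{\pm i|D|t}$ is an $H^s$-isometry; and $\frac{1}{2\pi}\int_0^{2\pi}(\cdot)(\tau)\,d\tau$ maps $C([0,2\pi];H^s)$ boundedly into $H^s$. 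Composing these yields the displayed estimates.

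Given that $G$ is locally Lipschitz on $H^s(\T)$, the Cauchy--Lipschitz theorem provides, for the data $\W_0=\eps W_0$, a unique maximal solution $\W\in C([0,T_{\max});H^s(\T))$ with the blow-up alternative $T_{\max}<\infty\Rightarrow\|\W(t)\|_{H^s}\to\infty$ as $t\uparrow T_{\max}$, together with uniqueness and continuous dependence. It then remains to show $T_{\max}>C\eps^{-2}$ for some $C=C(\|W_0\|_{H^s})>0$. Put $M:=\|W_0\|_{H^s}$, so $\|\W_0\|_{H^s}=\eps M$, and let $T^{\ast}:=\sup\{t\in[0,T_{\max}):\|\W(\tau)\|_{H^s}\le 2\eps M\ \text{for all}\ \tau\le t\}$. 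On $[0,T^{\ast}]$, the size bound on $G$ gives $\|G(\W(\tau))\|_{H^s}\le C_s\big((2\eps M)^3+(2\eps M)^5\big)\le C_s'\eps^3M^3$ for $\eps$ small, so the integral form of \eqref{averaging eqn} yields
\[\|\W(t)\|_{H^s}\le\eps M+\int_0^t\|G(\W(\tau))\|_{H^s}\,d\tau\le\eps M+C_s'\eps^3M^3\,t\le\eps M\big(1+C_s'cM^2\big)\quad\text{for }0\le t\le c\eps^{-2}.\]
Choosing $c=c(M)>0$ with $C_s'cM^2\le\tfrac12$ gives $\|\W(t)\|_{H^s}\le\tfrac32\eps M<2\eps M$ on $[0,\min(T^{\ast},c\eps^{-2})]$, so by continuity $T^{\ast}>c\eps^{-2}$ and hence $T_{\max}>c\eps^{-2}$ by the blow-up alternative; taking $C=c(\|W_0\|_{H^s})$ finishes the proof. (Equivalently, one may simply observe that the Cauchy--Lipschitz existence time for data of size $\eps M$ already scales like $(\eps M)^{-2}\gtrsim\eps^{-2}$.)

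The main point to be careful about is not an analytic obstruction but bookkeeping: because $H^s(\T)$ is an algebra for $s>\frac{1}{2}$ and the non-resonant denominators on $\T$ are bounded away from $0$, every building block of $G$ is a manifestly bounded multilinear expression, so the real tasks are (a) verifying that all the constants are independent of $t$, so that a single Lipschitz constant controls the iteration, and (b) noting that the cubic leading order of $G$ together with the $\eps$-smallness of $\W_0$ is exactly what buys the lifespan $c\eps^{-2}$. The only mildly delicate estimate is the Lipschitz bound for the quintic term $R_2$, which is obtained by differentiating the trilinear structure of $F_{\textup{osc}}$ and rerunning the multilinear estimate from the proof of Lemma \ref{Est T 1}.
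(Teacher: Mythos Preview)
Your proof is correct and follows essentially the same approach as the paper: both establish the size bounds $\|f_{\textup{res}}(h)\|_{H^s}\le C\|h\|_{H^s}^3$ and $\|R_2(h)\|_{H^s}\le C\|h\|_{H^s}^5$ and then run a contraction/Cauchy--Lipschitz argument on a ball of radius $2\eps\|W_0\|_{H^s}$, which immediately gives existence time $\sim\eps^{-2}$. The only cosmetic difference is that the paper bounds $R_2$ by writing out its Fourier coefficients explicitly and rerunning the multilinear estimate of Lemma~\ref{Est T 1}, whereas you obtain the same bound more compositionally from $\|F_{\textup{osc}}(h,t)\|_{H^s}\le C\|h\|_{H^s}^3$, $\|f'(h,t)\cdot g\|_{H^s}\le C\|h\|_{H^s}^2\|g\|_{H^s}$, and the boundedness of the time-average; and the paper sets up the contraction directly on $B(R)$ rather than invoking the blow-up alternative plus bootstrap.
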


\begin{proof}
We first estimate the two terms on the right hand-side of equation \eqref{averaging eqn}.
By equation \eqref{f_res T}, we have that
\begin{align*}
\|R_1(\W)\|_{H^s(\T)}=\|f_{\textup{res}}(\W)\|_{H^s(\T)}\leq C\|\W\|_{H^s(\T)}^3.
\end{align*}

\noi
Then, we explicitly write the Fourier coefficients of $\{f'(\W,t)\cdot F_{\textup{osc}}(\W,t)\}_{\textup{res}}$. Since we have
\begin{align*}
\mathcal{F}\big(f(\W)\big)(k)=-i\sum_{k-l+m-j=0}e^{it(|k|-|l|+|m|-|j|)}\hat{\W}(j)\hat{\W}(l)\cj{\hat{\W}}(m),
\end{align*}

\noi
it follows that
\begin{align}\label{eqn f'.Fosc}
&\mathcal{F}\Big(f'(\W)\cdot F_{\textup{osc}}(\W,t)\Big)(k)\\
&=-2i\sum_{k-l+m-j=0}
e^{it(|k|-|l|+|m|-|j|)}\ft{F_{\textup{osc}}(\W,t)}(j)\hat{\W}(l)\cj{\hat{\W}}(m)\notag\\
&\hphantom{XX}-i\sum_{k-l+m-j=0}e^{it(|k|-|l|+|m|-|j|)}\hat{\W}(j)\hat{\W}(l)\cj{\ft{F_{\textup{osc}}(\W)}}(m)\notag\\
&=2i \sum_{\substack{k-l+m-j=0\\j-n+p-q=0\\|j|-|n|+|p|-|q|\neq 0}}
e^{it(|k|-|l|+|m|-|j|)}\frac{e^{it(|j|-|n|+|p|-|q|)}}{|j|-|n|+|p|-|q|}\hat{\W}(n)\hat{\W}(q)\cj{\hat{\W}}(p)\hat{\W}(l)\cj{\hat{\W}}(m)\notag\\
&\hphantom{XX}+i \sum_{\substack{k-l+m-j=0\\m-n+p-q=0\\|m|-|n|+|p|-|q|\neq 0}}
e^{it(|k|-|l|+|m|-|j|)}\hat{\W}(j)\hat{\W}(l)\frac{e^{-it(|m|-|n|+|p|-|q|)}}{|m|-|n|+|p|-|q|}
\cj{\hat{\W}}(n)\cj{\hat{\W}}(q)\hat{\W}(p).\notag
\end{align}

\noi
Then, $R_2(\W)$, the resonant part of $f'(\W,t)\cdot F_{\textup{osc}}(\W,t)$, has the following Fourier coefficients:
\begin{align}\label{eqn R_2}
\mathcal{F}(R_2(\W))=&\mathcal{F}\Big(\{f'(\W)\cdot F_{\textup{osc}}(\W,t)\}_{\textup{res}}\Big)(k)\\
=&2i \sum_{\substack{k-l+m-j=0\\j-n+p-q=0\\|j|-|n|+|p|-|q|\neq 0\\|k|-|l|+|m|-|n|+|p|-|q|=0}}
\frac{1}{|j|-|n|+|p|-|q|}\hat{\W}(n)\hat{\W}(q)\cj{\hat{\W}}(p)\hat{\W}(l)\cj{\hat{\W}}(m)\notag\\
&+i \sum_{\substack{k-l+m-j=0\\m-n+p-q=0\\|m|-|n|+|p|-|q|\neq 0\\|k|-|l|-|j|+|n|-|p|+|q|=0}}
\frac{1}{|m|-|n|+|p|-|q|}\hat{\W}(j)\hat{\W}(l)\cj{\hat{\W}}(n)\cj{\hat{\W}}(q)\hat{\W}(p)\notag
\end{align}

\noi
Noticing that
\begin{align*}
&|\mathcal{F}\Big(\{f'(\W)\cdot F_{\textup{osc}}(\W,t)\}_{\textup{res}}\Big)(k)|\leq2 \sum_{k-l+m-n+p-q=0}|
\hat{\W}(n)\hat{\W}(q)\cj{\hat{\W}}(p)\hat{\W}(l)\cj{\hat{\W}}(m)|\\
&\hphantom{XXXXXXXXXXXXXXXXX}+ \sum_{k-l-j+n-p+q=0}|\hat{\W}(j)\hat{\W}(l)\cj{\hat{\W}}(n)\cj{\hat{\W}}(q)\hat{\W}(p)|,
\end{align*}

\noi
and proceeding as in the proof of Lemma \ref{Est T 1}, we obtain
\begin{align}\label{ineq R_2}
&\|R_2(\W)\|_{H^s(\T)}=\|\{f'(\W)\cdot F_{\textup{osc}}(\W,t)\}_{\textup{res}}\|_{H^s}\leq C\|\W\|_{H^s(\T)}^5.
\end{align}

We use a standard fixed point argument to prove that equation \eqref{averaging eqn} is locally well-posed. Define
\begin{align*}
A\W(t):=\W(0)+\int_0^tf_{\textup{res}}(\W(\tau))d\tau+
\int_0^t\{f'(\W(\tau),\tau)\cdot F_{\textup{osc}}(\W(\tau),\tau)\}_{\textup{res}}d\tau.
\end{align*}

\noi
We intend to show that there is $T=\frac{C'}{\eps^2}$ such that $A$ is a contraction of the ball
\begin{align*}
B(R)=\Big\{\W\in C([0,T], H^s)\Big| \|\W\|_{L^{\infty}([0,T],H^s(\T))}\leq R\Big\},
\end{align*}

\noi
where $R=2\|\W_0\|_{H^s(\T)}=2\eps\|W_0\|_{H^s(\T)}$.
First we notice that $A$ acts on the ball $B(R)$. Indeed, let $\W\in B(R)$. Then,
\begin{align*}
\|A\W\|_{L^{\infty}([0,T],H^s(\T))}&\leq \|\W(0)\|_{H^s(\T)}+T\|f_{\textup{res}}(\W(\tau))\|_{L^{\infty}([0,T],H^s(\T))}\\
&+T\|\{f'(\W(\tau),\tau)\cdot F_{\textup{osc}}(\W(\tau),\tau)\}_{\textup{res}}\|_{L^{\infty}([0,T],H^s(\T))}\\
&\leq \|\W(0)\|_{H^s(\T)}+CT\|\W\|_{H^s(\T)}^3(1+\|\W\|_{H^s(\T)}^2)\\
&\leq \frac{R}{2}+CTR^3(1+R^2).
\end{align*}

\noi
Choosing $T=\frac{1}{2CR^2(1+R^2)}\leq\frac{C'}{\eps^2}$,
we obtain $\|A\W\|_{L^{\infty}([0,T],H^s(\T))}\leq R$
and thus \\ $A\W\in B(R)$. The fact that $A$ is a contraction follows similarly.
Therefore, there exists a unique solution
of equation \eqref{averaging eqn} in $B(R)$.
\end{proof}

\begin{proposition}\label{prop: W=Y second iteration}
Let $W_0\in H^s_+(\T)$, $s>1/2$.
The solution of the Cauchy problem \eqref{averaging eqn},
\begin{equation*}
\begin{cases}
\partial_t\mathcal{W}=f_{\textup{res}}(\W)+\{f'(\W,t)\cdot F_{\textup{osc}}(\W,t)\}_{\textup{res}}\\
\W(0)=\eps W_0=:\W_0,
\end{cases}
\end{equation*}

\noi
coincides with the solution of the following Cauchy problem:
\begin{equation}\label{eqn Y}
\begin{cases}
\partial_t\mathcal{Y}=-i\Pi_+(|\Y|^2\Y)-i\Pi_+(|\Y|^2\frac{1}{D}\Pi_
-(|\Y|^2\Y))-\frac{i}{2}\Pi_+(\Y^2\frac{1}{D}\cj{\Pi_-(|\Y|^2\Y)})\\
\Y(0)=\eps W_0=:\W_0
\end{cases}
\end{equation}

\noi
on its maximal interval of existence.
\end{proposition}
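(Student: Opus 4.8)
The plan is to treat \eqref{averaging eqn} and \eqref{eqn Y} as \emph{autonomous} ODEs in $H^s(\T)$ (note that $\{f'(\W,t)\cdot F_{\textup{osc}}(\W,t)\}_{\textup{res}}$, being a time average, depends on $\W$ only), each with right-hand side Lipschitz on bounded subsets of $H^s(\T)$ for $s>\tfrac12$. Local well-posedness of \eqref{averaging eqn} is Lemma \ref{LWP for averaging eqn}; that of \eqref{eqn Y} follows from the same contraction argument, using that $\tfrac1D\Pi_-$ is bounded on $H^s(\T)$ (its Fourier multiplier is $\tfrac1k\pmb{1}_{k\le -1}$) and that $H^s(\T)$ is an algebra. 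By uniqueness it then suffices to verify that $\Y$, the solution of \eqref{eqn Y}, solves \eqref{averaging eqn} on the common interval of existence; since both solutions are maximal, their intervals of existence coincide as well. Observe first that the right-hand side of \eqref{eqn Y} takes values in $L^2_+(\T)$ and $\Y(0)=\W_0\in L^2_+(\T)$, so $\Y(t)\in H^s_+(\T)$ for every $t$; this is what makes the resonant nonlinearities collapse.

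Evaluating $f_{\textup{res}}$ on $\Y$: in the formula \eqref{f_res T} every summand other than $-i\Pi_+(|u_+|^2u_+)$ carries a factor of $u_-$, of $\|u_-\|_{L^2}$, or of $\mathcal F(|u_-|^2u_-)(0)$. Since $\Y_-\equiv 0$ all of these vanish, and $f_{\textup{res}}(\Y)=-i\Pi_+(|\Y|^2\Y)$; in particular the zero-frequency terms that complicate $f_{\textup{res}}$ on $\T$ disappear.

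The main computation is to show that $R_2(\Y):=\{f'(\Y,t)\cdot F_{\textup{osc}}(\Y,t)\}_{\textup{res}}$ equals $-i\Pi_+(|\Y|^2\tfrac1D\Pi_-(|\Y|^2\Y))-\tfrac i2\Pi_+(\Y^2\tfrac1D\cj{\Pi_-(|\Y|^2\Y)})$. For this I would start from the explicit Fourier formula \eqref{eqn R_2} and specialize to $\Y\in L^2_+(\T)$. In the first sum the arguments $n,p,q,l,m$ of $\hat\Y$ must be nonnegative; with $j=n-p+q$, the condition $|j|-|n|+|p|-|q|\neq 0$ is incompatible with $j\ge 0$ (by the same reasoning as in Lemma \ref{Resonances T} it would make the phase vanish), so $j\le -1$, on which region $|j|-|n|+|p|-|q|=-2j$. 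Hence $\tfrac{2i}{|j|-|n|+|p|-|q|}=-\tfrac ij$, which is $-i$ times the symbol of $\tfrac1D$ at the negative frequency $j$, and the sum of $\hat\Y(n)\cj{\hat\Y}(p)\hat\Y(q)$ over $j=n-p+q\le -1$ produces $\mathcal F(\tfrac1D\Pi_-(|\Y|^2\Y))(j)$. The outer constraint $|k|-|l|+|m|-|n|+|p|-|q|=0$ together with $k=l-m+j$ forces $|k|=k$, i.e. $k\ge 0$, which supplies the outer $\Pi_+$; convolving with $\hat\Y(l)\cj{\hat\Y}(m)$ over $k=l-m+j$ then yields exactly $-i\Pi_+(|\Y|^2\tfrac1D\Pi_-(|\Y|^2\Y))$. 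The second sum of \eqref{eqn R_2} is treated identically, with $m=n-p+q\le -1$ in the role of $j$, with the factors $\cj{\hat\Y}(n)$, $\cj{\hat\Y}(q)$, $\hat\Y(p)$ assembling $\mathcal F(\tfrac1D\cj{\Pi_-(|\Y|^2\Y)})$ after one invokes $\mathcal F(\cj g)(k)=\cj{\mathcal F(g)(-k)}$, and the outer constraint again forcing $k\ge 0$; this gives $-\tfrac i2\Pi_+(\Y^2\tfrac1D\cj{\Pi_-(|\Y|^2\Y)})$. Adding $f_{\textup{res}}(\Y)+R_2(\Y)$ reproduces the right-hand side of \eqref{eqn Y}, so $\Y$ satisfies $\partial_t\Y=f_{\textup{res}}(\Y)+\{f'(\Y,t)\cdot F_{\textup{osc}}(\Y,t)\}_{\textup{res}}$, i.e. it solves \eqref{averaging eqn}; the conclusion then follows by uniqueness and maximality.

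The step I expect to be the main obstacle is precisely this specialization of \eqref{eqn R_2}: keeping track of which of the several resonance conditions survive the restriction to nonnegative frequencies, checking that the nonvanishing-phase condition forces the intermediate frequencies $j$ (resp. $m$) to be strictly negative — so that it is $\tfrac1D\Pi_-$, and not an unbounded $\tfrac1D$, that appears, and no spurious zero-frequency contribution of the kind seen in \eqref{f_res T} arises — and getting the signs and the combinatorial factors of $2$ (from differentiating the cubic term in its two unconjugated slots) right. Everything else reduces to Cauchy--Lipschitz theory together with the algebra property of $H^s(\T)$ and the boundedness of $\tfrac1D\Pi_-$ on $H^s(\T)$, exactly as in the proof of Lemma \ref{LWP for averaging eqn}.
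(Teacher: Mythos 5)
Your overall strategy is exactly the paper's: local well-posedness and uniqueness for both Cauchy problems, persistence of the positive-frequency support of $\Y$, and then a Fourier-side identification, starting from \eqref{eqn R_2}, of the part of $\{f'(\W,t)\cdot F_{\textup{osc}}(\W,t)\}_{\textup{res}}$ that survives when $\W_-=0$. Your way of getting the outer $\Pi_+$ (the constraint $|k|-|l|+|m|-|n|+|p|-|q|=0$ collapses to $|k|-k=0$ once the inner frequencies are nonnegative and $j<0$, resp.\ $m<0$) is a slight streamlining of the paper's case analysis $\mathrm{(i)}$--$\mathrm{(ii)}$, but it is the same computation, and your treatment of the first sum is correct and agrees with the paper's $-i\Pi_+(|\Y|^2\frac{1}{D}\Pi_-(|\Y|^2\Y))$.

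The gap is the second sum, which you dispose of with ``treated identically'' -- and this is precisely the step you yourself flagged (signs and frequency reflections) without resolving it. If you carry out your own outline you do not get the stated coefficient. For $m=n-p+q\le -1$ the phase is $-2m>0$, so the prefactor is $\frac{i}{-2m}=+\frac{i}{2|m|}$, and the inner sum is $\cj{\mathcal{F}(\Pi_-(|\Y|^2\Y))(m)}$; passing to $\mathcal{F}(\cj g)(\mu)=\cj{\hat g(-\mu)}$ forces the reflection $\mu=-m$, and since the symbol $1/k$ of $\frac{1}{D}$ is odd one has $\cj{\tfrac{1}{D}h}=-\tfrac{1}{D}\cj h$. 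Tracking this sign, the surviving term is $+\frac{i}{2}\Pi_+\big(\Y^2\frac{1}{D}\cj{\Pi_-(|\Y|^2\Y)}\big)$, equivalently $-\frac{i}{2}\Pi_+\big(\Y^2\,\cj{\tfrac{1}{D}\Pi_-(|\Y|^2\Y)}\big)$, and not the literal $-\frac{i}{2}\Pi_+\big(\Y^2\frac{1}{D}\cj{\Pi_-(|\Y|^2\Y)}\big)$ appearing in \eqref{eqn Y}. A two-mode check makes this concrete: for $\Y=a+be^{ix}$ one has $\Pi_-(|\Y|^2\Y)=a^2\cj{b}\,e^{-ix}$ and $F_{\textup{osc}}(\Y,t)=-\tfrac12 e^{2it}a^2\cj{b}\,e^{-ix}$, and the resonant part of $f'(\Y,t)\cdot F_{\textup{osc}}(\Y,t)$ has Fourier coefficient $+\frac{i}{2}|a|^4 b$ at $k=1$, whereas the quintic terms on the right-hand side of \eqref{eqn Y}, as written, give $-\frac{i}{2}|a|^4 b$ there. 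So either the conjugation bar in the last term must be understood as applied after $\frac{1}{D}$ (in which case your appeal to $\mathcal{F}(\cj g)(k)=\cj{\hat g(-k)}$ must be made explicit, since it is exactly where the compensating sign comes from), or the sign of that term must be adjusted; as it stands, the computation you sketch does not reproduce the equation you claim to arrive at. For comparison, the paper's own proof also handles the second sum only with ``proceeding similarly,'' so this is the one step of the argument that genuinely needed to be written out, and your proposal leaves it at the same level of assertion.
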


\begin{proof}
First we make the observation that we can easily prove local
well-posedness of equation \eqref{eqn Y} in $H^s(\T)$, $s>\frac{1}{2}$ on a time interval $[0,\frac{C}{\eps^2}]$,
following the lines of the proof of Lemma \ref{LWP for averaging eqn}.
Notice that $\Y\in L^2_+(\T)$. Therefore $\Y_-(t)=0$, for all $t$
in the maximal interval of existence of $\Y$.

In the following we prove that the only terms that do not contain $\W_-$ and thus,
contain only $\W_+$ in $\{f'(\W,t)\cdot F_{\textup{osc}}(\W,t)\}_{\textup{res}}$
are $-\Pi_+(|\W_+|^2\frac{1}{D}\Pi_-(|\W_+|^2\W_+))$ and $-\frac{1}{2}\Pi_+(\W_+^2\frac{1}{D}\cj{\Pi_-(|\W_+|^2{\W_+})})$.
Since all the other terms contain at least one factor $\W_-$, it results that the $\Y(t)=\Y_+(t)$
is also solution for equation \eqref{averaging eqn}. By Lemma \ref{LWP for
 averaging eqn}, we have uniqueness of the solution of equation \eqref{averaging eqn}.
Thus, $\Y$ is the unique solution of equation \eqref{averaging eqn}.

It is thus sufficient to determine the terms of
$\{f'(\W,t)\cdot F_{\textup{osc}}(\W,t)\}_{\textup{res}}$ which do not contain $\W_-$.
Let us consider the first term of the Fourier coefficient in equation \eqref{eqn R_2}:
\begin{align*}
2i \sum_{\substack{k-l+m-j=0\\j-n+p-q=0\\|j|-|n|+|p|-|q|\neq 0\\|k|-|l|+|m|-|n|+|p|-|q|=0}}
\frac{1}{|j|-|n|+|p|-|q|}\hat{\W}(n)\hat{\W}(q)\cj{\hat{\W}}(p)
\hat{\W}(l)\cj{\hat{\W}}(m)
\end{align*}

\noi
The first condition we have for the above sum is that $|j|-|n|+|p|-|q|\neq 0$. As we noticed in
Lemma \ref{Resonances T}, it follows that $j,n,p,q$ cannot be simultaneously non-positive or
non-negative, $j\neq n$, and $j\neq q$. Since in the above expression we have the factor
$\hat{\W}(n)\hat{\W}(q)\cj{\hat{\W}}(p)$, it follows that if we only want to have $\W_+$,
then the only possibility is $p,n,q\geq 0$ and $j<0$. In particular, this also satisfies $j\neq n$ and $j\neq q$.

The second condition we have for the above sum is $|k|-|l|+|m|-|n|+|p|-|q|=0$. As
a consequence, this yields $|k|-|l|+|m|-|j|=-(|j|-|n|+|p|-|q|)\neq 0$. Thus, $k,l,m,j$
cannot be simultaneously non-positive or non-negative, $k\neq l$, and $k\neq j$.
Since in the above sum we see appear the product $\hat{\W}(l)\cj{\hat{\W}}(m)$,
if we only want to have $\W_+$, it follows that we have two choices:
\begin{align*}
&\mathrm{(i)}\,\,\,\, k,l,m\geq 0; j<0 \text{ and } k\neq l,\\
&\mathrm{(ii)}\,\,\,\, k<0; l,m\geq 0; j<0.
\end{align*}

\noi
Note that if $k,l,m\geq 0, j<0$ and if $k=l$, then $k-l+m-n=0$ yields $m=j$,
which contradicts the fact that $m$ and $j$ have different signs. Thus, the
condition $k\neq l$ in $\mathrm{(i)}$ is redundant.

We compute $|k|-|l|+|m|-|n|+|p|-|q|$ for the second case $\mathrm{(ii)}$:
\begin{align*}
|k|-|l|+|m|-|n|+|p|-|q|&=-k-l+m-n+p-q\\
&=-2k+(k-l+m-j)+(j-n+p-q)=-2k<0.
\end{align*}

\noi
This contradicts the condition $|k|-|l|+|m|-|n|+|p|-|q|= 0$, and thus the case $\mathrm{(ii)}$
does not take place.

In the case $\mathrm{(i)}$, we have
\begin{align*}
|k|-|l|+|m|-|n|+|p|-|q|&=k-l+m-n+p-q\\
&=(k-l+m-j)+(j-n+p-q)=0.
\end{align*}

\noi
Moreover,
\begin{align*}
|j|-|n|+|p|-|q|&=-j-n+p-q=-2j+(j-n+p-q)=-2j=-2(n-p+q).
\end{align*}

\noi
Thus the only possible choice if we want to obtain terms that do not contain $W_-$, is the following:
\begin{align*}
&2i \sum_{\substack{k-l+m-n+p-q=0\\n-p+q<0\\k,l,m,n,p,q\geq 0}}\frac{1}{-2(n-p+q)}
\hat{\W}(n)\hat{\W}(q)\cj{\hat{\W}}(p)
\hat{\W}(l)\cj{\hat{\W}}(m)\\
&\hphantom{XXXXXXXXXXX}=-i\sum_{\substack{k-l+m-(n-p+q)=0\\n-p+q<0\\k,l,m,n,p,q\geq 0}}
\mathcal{F}\Big(\frac{1}{D}\Pi_-(|\W|^2\W)\Big)(n-p+q)
\hat{\W}(l)\cj{\hat{\W}}(m)\\
&\hphantom{XXXXXXXXXXX}=-i\mathcal{F}\Big(\Pi\big(|\W|^2\frac{1}{D}\Pi_-(|\W|^2\W)\big)\Big)(k).
\end{align*}

\noi
Proceeding similarly with the second resonant part in equation \eqref{eqn R_2}, which is equal to
\begin{align*}
i \sum_{\substack{k-l+m-j=0\\m-n+p-q=0\\|m|-|n|+|p|-|q|\neq 0\\|k|-|l|-|j|+|n|-|p|+|q|=0}}
\frac{1}{|m|-|n|+|p|-|q|}\hat{\W}(j)\hat{\W}(l)\cj{\hat{\W}}(n)\cj{\hat{\W}}(q)\hat{\W}(p),
\end{align*}

\noi
we obtain that it contains only one term in which $W_-$ does not appear, which is
\begin{align*}
-\frac{i}{2}\mathcal{F}\Big(\Pi_+\big(\W^2\frac{1}{D}\cj{\Pi_-(|\W|^2\W)}\big)\Big)(k).
\end{align*}

\noi
Therefore, the conclusion of the proposition follows.
\end{proof}

\begin{proposition}\label{lemma: comparison Y U}
Let $s>\frac{1}{2}$, $0\leq\alpha\leq\frac{1}{2}$, and $\delta>0$ small enough.
Consider the equations
\begin{equation*}
\begin{cases}
\partial_tY=-i\eps^2\Pi_+(|Y|^2Y)-i\eps^4\Pi_+(|Y|^2\frac{1}{D}\Pi_-(|Y|^2Y))
-\frac{i\eps^4}{2}\Pi_+(Y^2\frac{1}{D}\cj{\Pi_-(|Y|^2Y)})\\
Y(0)=W_0
\end{cases}
\end{equation*}

\noi
and
\begin{equation}\label{eqn U}
\begin{cases}
\partial_tU=-i\eps^2\Pi_+(|U|^2U)\\
U(0)=W_0.
\end{cases}
\end{equation}

\noi
Assume that
$\|U(t)\|_{H^s}\leq C\Big(\log(\frac{1}{\eps^{\delta}})\Big)^{\alpha}$ for all $t\in\R$.
Then, for $0\leq t\leq \frac{1}{\eps^2}\Big(\log(\frac{1}{\eps^{\delta}})\Big)^{1-2\alpha}$, we have that
\begin{align*}
\|Y(t)-U(t)\|_{H^s}\leq \eps^{2-C_0\delta},
\end{align*}

\noi
where $C_0>0$ is a constant and $\delta$ is chosen small enough such that $C_0\delta<1$.
In particular,
$\|Y(t)\|_{H^s}\leq C\Big(\log(\frac{1}{\eps^{\delta}})\Big)^{\alpha}$.
\end{proposition}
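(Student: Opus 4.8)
The plan is to run the same bootstrap-plus-Gronwall scheme already used to prove Theorem \ref{Main theorem} and Theorem \ref{Thm T}, now applied to the difference $z(t):=Y(t)-U(t)$, which satisfies $z(0)=0$. Subtracting the two equations and applying Duhamel, I would write
\[
z(t)=-i\eps^2\int_0^t\big(\Pi_+(|Y|^2Y)-\Pi_+(|U|^2U)\big)(\tau)\,d\tau-i\eps^4\int_0^t\Big(\Pi_+\big(|Y|^2\tfrac1D\Pi_-(|Y|^2Y)\big)+\tfrac12\Pi_+\big(Y^2\tfrac1D\cj{\Pi_-(|Y|^2Y)}\big)\Big)(\tau)\,d\tau.
\]
Using that $H^s(\T)$ is a Banach algebra for $s>\tfrac12$, the first integrand is bounded by $C\|z\|_{H^s}\big(\|Y\|_{H^s}^2+\|U\|_{H^s}^2\big)$; for the $\eps^4$ terms one invokes the elementary bound $\|\tfrac1D\Pi_-g\|_{H^s(\T)}\le\|g\|_{H^s(\T)}$, valid on $\T$ because $\tfrac1D\Pi_-e^{ikx}=\tfrac1k\pmb{1}_{k\le-1}$ with $|1/k|\le1$ --- this is precisely the step that fails on $\R$ --- so that those terms are bounded by $C\|Y\|_{H^s}^5$. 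Altogether
\[
\|z(t)\|_{H^s}\le C\eps^2\int_0^t\|z(\tau)\|_{H^s}\big(\|Y(\tau)\|_{H^s}^2+\|U(\tau)\|_{H^s}^2\big)\,d\tau+C\eps^4\int_0^t\|Y(\tau)\|_{H^s}^5\,d\tau.
\]

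Next I would set $T:=\sup\{t\in[0,T_{\max})\mid\|z(t)\|_{H^s}\le1\}$, where $T_{\max}$ is the maximal existence time of $Y$ (local well-posedness following from a fixed-point argument exactly as in Lemma \ref{LWP for averaging eqn}), and argue by contradiction assuming $T\le\frac1{\eps^2}(\log(\tfrac1{\eps^\delta}))^{1-2\alpha}$. On $[0,T]$ the hypothesis $\|U(t)\|_{H^s}\le C(\log(\tfrac1{\eps^\delta}))^\alpha$ together with $\|z(t)\|_{H^s}\le1$ gives $\|Y(t)\|_{H^s}\le C(\log(\tfrac1{\eps^\delta}))^\alpha$ for $\eps$ small. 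Substituting into the previous inequality and using $\eps^4t\le\eps^2(\log(\tfrac1{\eps^\delta}))^{1-2\alpha}$, one gets, for $0\le t\le\frac1{\eps^2}(\log(\tfrac1{\eps^\delta}))^{1-2\alpha}$,
\[
\|z(t)\|_{H^s}\le C\eps^2\Big(\log(\tfrac1{\eps^\delta})\Big)^{2\alpha}\int_0^t\|z(\tau)\|_{H^s}\,d\tau+C\eps^2\Big(\log(\tfrac1{\eps^\delta})\Big)^{1+3\alpha}.
\]
Gronwall's inequality then yields $\|z(t)\|_{H^s}\le C(\log(\tfrac1{\eps^\delta}))^{1+3\alpha}\eps^2\,e^{C\eps^2(\log(\frac1{\eps^\delta}))^{2\alpha}t}\le C(\log(\tfrac1{\eps^\delta}))^{1+3\alpha}\eps^{2-C\delta}\le\eps^{2-C_0\delta}$, where one uses $\eps^2(\log(\tfrac1{\eps^\delta}))^{2\alpha}t\le\log(\tfrac1{\eps^\delta})$ in the middle step and absorbs the logarithmic factor and the constant into $\eps^{-C_0\delta}$ for a suitable $C_0>0$.

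Since $\eps^{2-C_0\delta}\ll1$ when $\delta$ is small, this is strictly better than the threshold $1$ defining $T$, so by continuity of $z$ --- the blow-up criterion not being triggered because $\|Y\|_{H^s}$ remains bounded --- $T$ can be extended beyond $\frac1{\eps^2}(\log(\tfrac1{\eps^\delta}))^{1-2\alpha}$, a contradiction. Hence the bound $\|Y(t)-U(t)\|_{H^s}\le\eps^{2-C_0\delta}$ holds on the whole interval, and $\|Y(t)\|_{H^s}\le\|U(t)\|_{H^s}+\eps^{2-C_0\delta}\le C(\log(\tfrac1{\eps^\delta}))^\alpha$ follows. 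I expect no serious obstacle: the argument is essentially a repeat of the proof of Theorem \ref{Main theorem}, and the only points requiring care are the $\tfrac1D\Pi_-$ operator bound (immediate on $\T$, but exactly what obstructs the passage to $\R$) and the bookkeeping of the powers of $\log(\tfrac1{\eps^\delta})$ so that they get absorbed into $\eps^{-C_0\delta}$.
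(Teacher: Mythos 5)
Your proposal is correct and follows essentially the same route as the paper: Duhamel for the difference $Z=Y-U$, the algebra property of $H^s(\T)$ together with the boundedness of $\Pi_+$ and $\frac{1}{D}\Pi_-$ on $\T$, and then the same bootstrap-plus-Gronwall scheme as in the proof of Theorem \ref{Main theorem}, with the same bookkeeping of the logarithmic factors absorbed into $\eps^{-C_0\delta}$. Your explicit handling of the bootstrap to control $\|Y\|_{H^s}$ and of the continuation of the solution is exactly what the paper's terser phrase ``proceeding as in the proof of Theorem \ref{Main theorem}'' implicitly invokes.
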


\begin{proof}
Set $Z:=Y-U$. Then $Z$ satisfies the equation
\begin{align*}
\begin{cases}
\partial_tZ=-i\eps^2\Big(\Pi_+(|Y|^2Y)-\Pi_+(|U|^2U)\Big)-i\eps^4\Pi_+(|Y|^2\frac{1}{D}
\Pi_-(|Y|^2Y))\\
\hphantom{XXX}-\frac{i\eps^4}{2}\Pi_+(Y^2\frac{1}{D}\cj{\Pi_-(|Y|^2Y)})\\
Z(0)=0.
\end{cases}
\end{align*}

\noi
We set also
\begin{align*}
h(U):&=-i\Pi_+(|U|^2U)\\
g(U):&=-i\Pi_+(|U|^2\frac{1}{D}\Pi_-(|U|^2U))-\frac{i}{2}\Pi_+(U^2\frac{1}{D}\cj{\Pi_-(|U|^2U)}).
\end{align*}

\noi
Then, we have
\begin{align*}
Z(t)=\eps^2\int_0^t\Big(h(Y(\tau))-h(U(\tau))\Big)d\tau+\eps^4\int_0^tg(Y(\tau))d\tau.
\end{align*}

\noi
Using the boundedness of the operators $\Pi_+$ and $\frac{1}{D}\Pi_-$ on $H^s(\T)$, and proceeding as in the proof
of Theorem \ref{Main theorem}, we obtain that
\begin{align*}
\|Z(t)\|_{H^s(\T)}\leq &\eps^2\Big(\log(\frac{1}{\eps^{\delta}})\Big)^{2\alpha}C\int_0^t\|Z(\tau)\|d\tau+\eps^4\Big(\log(\frac{1}{\eps^{\delta}})\Big)^{5\alpha}Ct.
\end{align*}

\noi
By Gronwall's inequality, it follows that for $0\leq t\leq \frac{1}{\eps^2}\Big(\log(\frac{1}{\eps^{\delta}})\Big)^{1-2\alpha}$ we have
\begin{align*}
\|Z(t)\|_{H^s(\T)}\leq & \eps^{2-C_0\delta},
\end{align*}

\noi
where $C_0>0$ is a constant.
\end{proof}

\subsection{Proof of Theorem \ref{Theorem second interation}}

\begin{lemma}\label{Est 2}
For $W\in H^s(\T)$ we have that
\begin{align*}
\|f'(W,t)\|\leq &C\|W\|_{H^s}^2,\\
\|f''(W,t)\|\leq &C\|W\|_{H^s}.
\end{align*}

\noi
where $\|\cdot\|$ denotes the operator norm of a bounded linear operator acting on $H^s(\T)$.
In addition, the following applications are continuous and $N$-linear on $H^s(\T)$:
\begin{enumerate}
\item $W\mapsto N_2(W,t)$ with $N=5$, \\
\item $W\mapsto f'(W,t)\cdot R_2(W)$, $W\mapsto N'_1(W,t)\cdot R_2(W)$, $W\mapsto N'_2(W,t)\cdot R_1(W)$
with $N=7$, \\
\item $W\mapsto N'_2(W,t)\cdot R_2(W)$ with $N=9$.
\end{enumerate}

\noi
In particular, if $\|W\|_{H^s(\T)}\leq \Big(\log(\frac{1}{\eps^{\delta}})\Big)^{\alpha}$, then their $H^s(\T)$-norms
are all bounded by $\Big(\log(\frac{1}{\eps^{\delta}})\Big)^{9\alpha}$.
\end{lemma}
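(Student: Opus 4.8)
To prove Lemma~\ref{Est 2} the plan is to treat every object in the statement as a homogeneous polynomial in the pair $(W,\cj W)$, read off its degree from the Fourier formulas already established, and then bound the corresponding multilinear operator on $H^s(\T)$ exactly as in the proof of Lemma~\ref{Est T 1}. Throughout I would use that $H^s(\T)$ is a Banach algebra for $s>\frac12$, that $e^{\pm i|D|t}$ acts isometrically on $H^s(\T)$, and that on $\T$ all the Fourier multipliers produced by the resonant and oscillatory projections are $O(1)$, the phases $|k|-|l|+|m|-|j|$ and their iterates being nonzero integers. For the two operator-norm estimates I would simply differentiate $f(W,t)=-ie^{i|D|t}\big(|e^{-i|D|t}W|^2e^{-i|D|t}W\big)$: writing $a=e^{-i|D|t}W$ one gets
\[
f'(W,t)h=-ie^{i|D|t}\big(2a\cj a\,e^{-i|D|t}h+a^2\,\cj{e^{-i|D|t}h}\big),
\]
which is linear in $h$ and quadratic in $W$, and one further differentiation produces an expression bilinear in the two increments and linear in $W$. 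Estimating each factor in $H^s$ by its own $H^s$-norm then gives $\nm{f'(W,t)}\leq C\nm{W}_{H^s}^2$ and $\nm{f''(W,t)}\leq C\nm{W}_{H^s}$, with constants uniform in $t$.

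Next I would record the degrees. Both $N_1=F_{\textup{osc}}$ and $R_1=f_{\textup{res}}$ are cubic in $(W,\cj W)$; by \eqref{eqn R_2} the resonant part $R_2=\{f'(W,t)\cdot F_{\textup{osc}}(W,t)\}_{\textup{res}}$ is a double sum over two linear constraints of a product of five factors $\hat W(\cdot)$, hence $5$-linear; and by \eqref{eqn R_2,N_2 explicit} $\partial_t N_2$ is a difference of two $5$-linear terms, so, the mean-zero antiderivative in $t$ preserving this structure, $N_2$ is $5$-linear, which is item~(1). The remaining items follow by adding degrees: $f'(W,t)\cdot R_2(W)$ and $N_1'(W,t)\cdot R_2(W)=F_{\textup{osc}}'(W,t)\cdot R_2(W)$ are $7$-linear ($2+5$), $N_2'(W,t)\cdot R_1(W)$ is $7$-linear ($4+3$), and $N_2'(W,t)\cdot R_2(W)$ is $9$-linear ($4+5$); since $N_2$ has the same complexity as $R_2$, the term $f'(W,t)\cdot N_2(W,t)$ is handled in the same way.

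For each such $N$-linear map $W\mapsto T(W,\dots,W)$ I would then prove $\nm{T(W,\dots,W)}_{H^s}\leq C_s\nm{W}_{H^s}^N$, which also yields continuity of the underlying $N$-linear form. The point is that after discarding the $O(1)$ phase and $1/(\text{phase})$ factors, the $k$-th Fourier coefficient of the output is dominated by $\sum|\hat W(j_1)|\cdots|\hat W(j_N)|$ over a single linear relation among $j_1,\dots,j_N$ (the two constraints defining $R_2$, and those in $\partial_t N_2$, collapse to one relation on the surviving indices). From here the computation in the proof of Lemma~\ref{Est T 1} applies almost verbatim: square the $H^s$-norm, use $\jb{k}^s\leq C_s(\jb{j_1}^s+\cdots+\jb{j_N}^s)$ to move the weight onto one index, introduce $V^\ast=\sum|\hat W(j)|e^{ijx}$ and $U^\ast=\sum\jb{j}^s|\hat W(j)|e^{ijx}$, and estimate $\int_\T|U^\ast|^2|V^\ast|^{2(N-1)}\leq\nm{U^\ast}_{L^2}^2\nm{V^\ast}_{L^\infty}^{2(N-1)}\leq C_s\nm{W}_{H^s}^{2N}$ via the embedding $H^s(\T)\subset L^\infty(\T)$. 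Finally, since $\delta>0$ gives $\log(\frac{1}{\eps^\delta})=\delta\log\frac{1}{\eps}\geq 1$ for $\eps$ small, the bound $\nm{W}_{H^s}\leq\big(\log(\frac{1}{\eps^\delta})\big)^\alpha$ forces every quantity in the list to be at most $C_s\big(\log(\frac{1}{\eps^\delta})\big)^{N\alpha}\leq C_s\big(\log(\frac{1}{\eps^\delta})\big)^{9\alpha}$.

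The only step requiring genuine care is the bookkeeping for $R_2$ and $N_2$: one must check that the inner phase $|j|-|n|+|p|-|q|$ never vanishes on the set over which one sums, so that dividing by it is a harmless multiplier. On $\T$ this is automatic because that phase is a nonzero integer, but it is exactly what fails on $\R$, where the analogous denominator can be arbitrarily small; this is the structural reason the second-order result is confined to the torus. Once this observation is in place, every estimate reduces mechanically to the scheme of Lemma~\ref{Est T 1}, and I do not expect further obstacles.
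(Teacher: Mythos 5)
Your proposal is correct and follows essentially the same route as the paper, whose proof of this lemma consists precisely of the remark that it "follows the same lines as that of Lemma \ref{Est T 1}": you spell out the degree bookkeeping (cubic $N_1,R_1$, quintic $R_2,N_2$, hence $7$- and $9$-linear compositions), note that the nonzero integer phases make the divisors harmless multipliers on $\T$, and then run the $V^{\ast},U^{\ast}$ scheme of Lemma \ref{Est T 1} with the algebra property and Sobolev embedding. No gaps; this is the intended argument, with details the paper leaves implicit.
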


\begin{proof}
The proof follows the same lines as that of Lemma \ref{Est T 1}.
\end{proof}

\begin{proof}[Proof of Theorem \ref{Theorem second interation}]
By Lemma \ref{prop: W=Y second iteration},
we have that the solution of the averaged equation \eqref{averaging eqn} is $\W(t)=\Y(t)$.
By hypothesis, we have that the solution
of equation \eqref{eqn U} satisfies $\|U(t)\|_{H^s(\T)}\leq C\Big(\log(\frac{1}{\eps^{\delta}})\Big)^{\alpha}$
Then, by Lemma \ref{lemma: comparison Y U},
it follows that $\|W(t)\|_{H^s(\T)}=\|Y(t)\|_{H^s(\T)}\leq C\Big(\log(\frac{1}{\eps^{\delta}})\Big)^{\alpha}$.
Using the estimates of Lemma \ref{Est 2}, it follows from equation \eqref{Duhamel difference ord 2}, that
\begin{align*}
\|w(t)\|_{H^s(\T)}\leq &\eps^2\Big(\log(\frac{1}{\eps^{\delta}})\Big)^{2\alpha}C\int_0^t\|w(\tau)\|_{H^s(\T)}d\tau+ \eps^6\Big(\log(\frac{1}{\eps^{\delta}})\Big)^{9\alpha}Ct,
\end{align*}

\noi
for $0\leq t\leq\frac{1}{\eps^2}\Big(\log(\frac{1}{\eps^{\delta}})\Big)^{1-2\alpha}$. Then, by Gronwall's inequality we obtain
\begin{align*}
\|w(t)\|_{H^s(\T)}\leq &\eps^6\Big(\log(\frac{1}{\eps^{\delta}})\Big)^{9\alpha}Cte^{\eps^2\Big(\log(\frac{1}{\eps^{\delta}})\Big)^{2\alpha}Ct}\\
\leq &\eps^4\Big(\log(\frac{1}{\eps^{\delta}})\Big)^{1+7\alpha}e^{C\log(\frac{1}{\eps^{\delta}})}\leq \eps^{4-C_0\delta},
\end{align*}

\noi
where $C_0>0$.
Thus, $\|u(t)-W(t)-\eps^2N_1(W,t)-\eps^4 N_2(W,t)\|_{H^s}\leq \eps^{4-C_0\delta}$ for $0\leq t\leq \frac{1}{\eps^2}\Big(\log(\frac{1}{\eps^{\delta}})\Big)^{1-2\alpha}$.
Since $\|N_2(W,t)\|_{H^s(\T)}\leq C\Big(\log(\frac{1}{\eps^{\delta}})\Big)^{5\alpha}$, this yields
\[\|u(t)-W(t)-\eps^2F_{\textup{osc}}(W,t)\|_{H^s(\T)}=\|u(t)-W(t)-\eps^2N_1(W,t)\|_{H^s(\T)}\leq \eps^{4-C_0\delta}.\]

\noi
Changing back to the variables $v=\eps e^{-i|D|t}u$ and $\W=\eps W$, the conclusion of the theorem follows:
\[\|v(t)-e^{-i|D|t}\big(\W(t)+F_{\textup{osc}}(\W,t)\big)\|_{H^s(\T)}=\|u(t)-W(t)-\eps^2N_1(W,t)\|_{H^s(\T)}\leq \eps^{5-C_0\delta}.\]
\end{proof}

\noi
{\bf Acknowledgments:}
The author would like to thank her Ph.D. advisor Prof. Patrick G\'erard
for suggesting this problem to her and for interesting discussions. She is also grateful to Tadahiro Oh
for giving her the reference \cite{Abou Salem}, from which she learned about the 
renormalization group method.


\begin{thebibliography}{99}

\bibitem{Abou Salem} W.K. Abou Salem,
{\it On the renormalization group approach to perturbation theory for PDEs,}
Ann. Henri Poincar\'e 11, no. 6, 1007--1021 (2010).

\bibitem{Bogolyubov and Mitropol'skii} N.N. Bogolyubov, Yu. A. Mitropol'skii
{\it Asymptotic Methods in the Theory of Nonlinear Oscillations,}
Hindustan, Delhi (1958).

\bibitem{BGT} N. Burq, P. G\'erard, N. Tzvetkov,
{\it Bilinear eigenfunction estimates and the nonlinear Schr\"odinger equation 
on surfaces,} Invent. Math., 159 (2005), 187--223.

\bibitem{CGO 1994} L.-Y. Chen, N. Goldenfeld, Y. Oono,
{\it Renormalization group theory for global
asymptotic analysis,}
Phys. Rev. Lett. 73(10), 1311--1315 (1994).

\bibitem{CGO 1996} L.-Y. Chen, N. Goldenfeld, Y. Oono,
{\it Renormalization group and singular perturbations:
multiple scales, boundary layers, and reductive perturbation theory,}
Phys. Rev. E 543(1), 376--394 (1996).

\bibitem{I team} J. Colliander, M.Keel, G. Staffilani, H. Takaoka, T. Tao,
{\it Transfer of energy to high frequencies in the cubic defocusing nonlinear Schrödinger equation,}
 Invent. Math. 181 (2010), no. 1, 39--113.

\bibitem{De Ville} R. De Ville, A. Harkin, M. Holzer, K. Josic, T. Kaper,
{\it Analysis of a renormalization
group method and normal form theory for perturbed ordinary differential
equations,}
Physica D 237, 1029--1052 (2008).

\bibitem{PGSG} P. G\'erard, S. Grellier,
{\it The cubic Szeg\"{o} equation,} Annales Scientifiques de l'Ecole 
Normale Sup\'erieure, Paris, $4^e$ s\'erie, t. 43, (2010), 761--810.

\bibitem{PGSGIT} P. G\'erard, S. Grellier,
{\it Invariant tori for the cubic Szeg\"{o} equation,} preprint \url{arXiv:1011.5479v1}, to appear in Inventiones Mathematicae.

\bibitem{PGSG res} P. G\'erard, S. Grellier,
{\it Effective integrable dynamics for some nonlinear wave equation}, \url{arXiv:1110.5719v1}.

\bibitem{Germain 1} P. Germain,
{\it Space-time resonances}, 	
arXiv:1102.1695.

\bibitem{Germain 2} P. Germain, N. Masmoudi, J. Shatah,
{\it Global solutions for 2D quadratic Schr\"odinger equations,}
arXiv:1001.5158.

\bibitem{Germain 3} P. Germain,
{\it Global existence for coupled Klein-Gordon equations with different speeds,}
	arXiv:1005.5238.

\bibitem{Germain 4} P. Germain, N. Masmoudi, J. Shatah,
{\it Global solutions for the gravity water waves equation in dimension 3,}
C. R. Math. Acad. Sci. Paris 347 (2009), no. 15--16, 897--902.

\bibitem{Germain 5} P. Germain, N. Masmoudi, J. Shatah,
{\it Global solutions for 3D quadratic Schrödinger equations,}
Int. Math. Res. Not. IMRN 2009, no. 3, 414--432.

\bibitem{GNT} S. Gustafson, K. Nakanishi, T.-P. Tsai,
{\it Scattering theory for the Gross-Pitaevskii equation in three dimensions,}
Commun. Contemp. Math. 11 (2009), no. 4, 657--707.


\bibitem{Lax} P. Lax, {\it Integral of nonlinear equations of evolution and solitary waves,}
Comm. Pure and Applied Math., 101 (1968), 467--490.

\bibitem{Moise Temam} I. Moise, R. Temam,
{\it Renormalization group method. Applications to Navier–
Stokes equation,}
Discret. Continuous Dyn. Syst. 6, 191--200 (2000).

\bibitem{Moise Ziane} I. Moise, M. Ziane,
{\it Renormalization Group Method. Applications to Partial
Differential Equations,}
J. Dyn. Differ. Equ. 13, 275--321 (2001).

\bibitem{Shatah} J. Shatah,
{\it Space-time resonances,}
Quart. Appl. Math. 68 (2010), no. 1, 161--167.

\bibitem{Petcu Temam}
M. Petcu, R. Temam, D. Wirosoetisno,
{\it Renormalization group method applied
to the primitive equations,}
J. Differ. Equ. 208, 215--257 (2005).

\bibitem{pocov1} O. Pocovnicu,
{\it Traveling waves for the cubic Szeg\"{o} equation on the real line,} to appear in Analysis and PDE.

\bibitem{pocov2}
O. Pocovnicu,
{\it Explicit formula for the solution of the Szeg\"o equation on the real line and applications,}
Disc. Cont. Dyn. Sys.-A, Vol. 31, no. 3, (2011), 607-649.

\bibitem{Temam Wirosoetisno} R. Temam, D. Wirosoetisno,
{\it Averaging of differential equations generating oscillations and an application to control,}
Special issue dedicated to the memory of Jacques-Louis Lions. Appl. Math. Optim. 46, no. 2-3, 313--330 (2002).

\bibitem{Ziane} M. Ziane,
{\it On a certain renormalization group method,}
J.Maths.Phys., 41 (5), (2000).

\end{thebibliography}
\end{document}